\documentclass[pdflatex,sn-mathphys-num]{sn-jnl}
\usepackage{amsmath}
\usepackage{amssymb}
\usepackage{amsthm}
\usepackage{newtx}
\hypersetup{hidelinks}

\newcommand{\bbC}{\mathbb{C}}
\newcommand{\bbF}{\mathbb{F}}
\newcommand{\bbH}{\mathbb{H}}
\newcommand{\bbR}{\mathbb{R}}
\newcommand{\bbT}{\mathbb{T}}
\newcommand{\bbZ}{\mathbb{Z}}

\newcommand{\bfA}{\boldsymbol{A}}
\newcommand{\bff}{\boldsymbol{f}}
\newcommand{\bfF}{\boldsymbol{F}}
\newcommand{\bfG}{\boldsymbol{G}}
\newcommand{\bfL}{\boldsymbol{L}}
\newcommand{\bfP}{\boldsymbol{P}}
\newcommand{\bfQ}{\boldsymbol{Q}}
\newcommand{\bfu}{\boldsymbol{u}}
\newcommand{\bfU}{\boldsymbol{U}}
\newcommand{\bfv}{\boldsymbol{v}}
\newcommand{\bfV}{\boldsymbol{V}}
\newcommand{\bfx}{\boldsymbol{x}}
\newcommand{\bfX}{\boldsymbol{X}}
\newcommand{\bfy}{\boldsymbol{y}}

\newcommand{\bfzero}{\boldsymbol{0}}

\newcommand{\bfgamma}{\boldsymbol{\gamma}}
\newcommand{\bfGamma}{\boldsymbol{\varGamma}}
\newcommand{\bfdelta}{\boldsymbol{\delta}}
\newcommand{\bfeta}{\boldsymbol{\eta}}
\newcommand{\bftheta}{\boldsymbol{\theta}}
\newcommand{\bfLambda}{\boldsymbol{\varLambda}}
\newcommand{\bfPi}{\boldsymbol{\varPi}}
\newcommand{\bfphi}{\boldsymbol{\varphi}}
\newcommand{\bfPhi}{\boldsymbol{\varPhi}}
\newcommand{\bfpsi}{\boldsymbol{\psi}}
\newcommand{\bfPsi}{\boldsymbol{\varPsi}}
\newcommand{\bfchi}{\boldsymbol{\chi}}

\newcommand{\brI}{\mathbf{I}}
\newcommand{\brJ}{\mathbf{J}}
\newcommand{\brT}{\mathbf{T}}

\newcommand{\calA}{\mathcal{A}}
\newcommand{\calC}{\mathcal{C}}
\newcommand{\calD}{\mathcal{D}}
\newcommand{\calE}{\mathcal{E}}
\newcommand{\calF}{\mathcal{F}}
\newcommand{\calG}{\mathcal{G}}
\newcommand{\calH}{\mathcal{H}}
\newcommand{\calK}{\mathcal{K}}
\newcommand{\calM}{\mathcal{M}}
\newcommand{\calN}{\mathcal{N}}
\newcommand{\calR}{\mathcal{R}}
\newcommand{\calS}{\mathcal{S}}
\newcommand{\calT}{\mathcal{T}}
\newcommand{\calX}{\mathcal{X}}

\newcommand{\rmc}{\mathrm{c}}
\newcommand{\rmd}{\mathrm{d}}
\newcommand{\rmi}{\mathrm{i}}
\newcommand{\rmj}{\mathrm{j}}
\newcommand{\rms}{\mathrm{s}}

\newcommand{\im}{\operatorname{im}}
\newcommand{\tr}{\operatorname{tr}}
\newcommand{\Tr}{\operatorname{Tr}}
\newcommand{\Fro}{{\operatorname{Fro}}}
\newcommand{\rank}{\operatorname{rank}}
\newcommand{\real}{\operatorname{Re}}
\newcommand{\imag}{\operatorname{Im}}
\newcommand{\Span}{\operatorname{span}}
\newcommand{\diag}{\operatorname{diag}}
\newcommand{\Sym}{\operatorname{Sym}}
\newcommand{\dsample}{\operatorname{\downarrow}}
\newcommand{\usample}{\operatorname{\uparrow}}

\newcommand{\ETF}{\operatorname{ETF}}
\newcommand{\TFF}{\operatorname{TFF}}
\newcommand{\UNTF}{\operatorname{UNTF}}
\newcommand{\SUNTF}{\operatorname{SUNTF}}
\newcommand{\ECTFF}{\operatorname{ECTFF}}
\newcommand{\EITFF}{\operatorname{EITFF}}

\newcommand{\phm}{\phantom{-}}
\newcommand{\medoplus}{\mathop{\textstyle\bigoplus}\limits}

\newcommand{\abs}[1]{|{#1}|}

\newcommand{\biggabs}[1]{\biggl|{#1}\biggr|}

\newcommand{\bigparen}[1]{\bigl({#1}\bigr)}
\newcommand{\Bigparen}[1]{\Bigl({#1}\Bigr)}
\newcommand{\biggparen}[1]{\biggl({#1}\biggr)}

\newcommand{\set}[1]{\{{#1}\}}

\newcommand{\norm}[1]{\|{#1}\|}

\newcommand{\Bignorm}[1]{\Bigl\|{#1}\Bigr\|}
\newcommand{\biggnorm}[1]{\biggl\|{#1}\biggr\|}

\newcommand{\ip}[2]{\langle{#1},{#2}\rangle}
\newcommand{\bigip}[2]{\bigl\langle{#1},{#2}\bigr\rangle}

\newcommand{\alphi}{\renewcommand{\labelenumi}{(\alph{enumi})}}

\newcommand{\romani}{\renewcommand{\labelenumi}{(\roman{enumi})}}

\theoremstyle{definition}
\newtheorem{theorem}{Theorem}[section]
\newtheorem{lemma}[theorem]{Lemma}
\newtheorem{corollary}[theorem]{Corollary}

\newtheorem{definition}[theorem]{Definition}
\newtheorem{remark}[theorem]{Remark}
\newtheorem{example}[theorem]{Example}
\newtheorem{problem}[theorem]{Problem}

\begin{document}

\title[Incoherent filter banks]{Incoherent filter banks}
\author[1]{\fnm{William J.} \sur{Brinkley}}
\author[1]{\fnm{Matthew} \sur{Fickus}}
\author[1]{\fnm{John} \sur{Jasper}}
\author[2]{\fnm{Dustin G.} \sur{Mixon}}

\affil[1]{
\orgdiv{Department of Mathematics \& Statistics},
\orgname{Air Force Institute of Technology},
\orgaddress{\city{Wright-Patterson Air Force Base}, \postcode{45433}, \state{Ohio}, \country{USA}}}

\affil[2]{
\orgdiv{Department of Mathematics and Translational Data Analytics Institute},\newline
\orgname{The Ohio State University},
\orgaddress{\city{Columbus}, \postcode{43210}, \state{Ohio}, \country{USA}}}

\abstract{Equi-isoclinic (EI) and equichordal (EC) tight fusion frames (TFFs) are types of minimally coherent (maximally orthogonal) subspace packings that arise in several applications, such as compressed sensing. They seem challenging to construct, and the problem of characterizing their existence remains mostly open. In this paper, we characterize those special complex EITFFs and ECTFFs that arise from isometries whose cross-Gram matrices commute, as is the case with filter banks. In particular, we show that every filter bank EITFF is equivalent to a direct sum of equiangular tight frames (ETFs). More generally, we show that every filter bank ECTFF is analogously equivalent to a novel generalization of ETFs that we dub stratified unit norm tight frames (SUNTFs). We further establish a general theory of SUNTFs, providing various explicit constructions of them, and some necessary conditions on their existence. All of our techniques are explicit, yielding direct constructions of incoherent filter banks.}

\keywords{equichordal, fusion frame, filter bank}

\pacs[MSC Classification]{42C15}

\maketitle

\section{Introduction}

Let $\bbF$ be $\bbR$ or $\bbC$.
In various applications, such as compressed sensing~\cite{Tropp04,EldarKB10},
waveform design~\cite{StrohmerH03} and quantum tomography~\cite{Zauner99},
the following problem arises:
given integers $N\geq 2$ and $D\geq R\geq 1$ such that $NR>D$,
design $N$ subspaces \smash{$(\calF_n)_{n=1}^{N}$} of $\bbF^D$,
each of dimension $R$, to be minimally coherent (nonorthogonal).
More precisely, design $N$ matrices $(\bfPhi_n)_{n=1}^{N}$ in $\bbF^{D\times R}$,
with the columns of each $\bfPhi_n$ forming an orthonormal basis for $\calF_n$,
that minimize either the \textit{spectral (block) coherence}~\cite{EldarKB10} or \textit{chordal coherence}, defined as
\begin{equation}
\label{eq.spectral and chordal coherence}
\mu_{\rms}:=\max_{n_1\neq n_2}\norm{\bfPhi_{n_1}^*\bfPhi_{n_2}^{}}_2,
\quad
\mu_{\rmc}:=\max_{n_1\neq n_2} \tfrac1{\sqrt{R}}\norm{\bfPhi_{n_1}^*\bfPhi_{n_2}^{}}_{\Fro},
\end{equation}
respectively.
Equivalently, design such $(\calF_n)_{n=1}^{N}$ to be maximally pairwise separated with respect to either the \textit{spectral distance}~\cite{DhillonHST08} or \textit{chordal distance}~\cite{ConwayHS96}, namely
\begin{equation}
\label{eq.spectral and chordal distance}
\rmd_{\rms}(\calF_{n_1},\calF_{n_2}):=(1-\norm{\bfPhi_{n_1}^*\bfPhi_{n_2}^{}}_2^2)^{\frac12},
\quad
\rmd_{\rmc}(\calF_{n_1},\calF_{n_2}):=(R-\norm{\bfPhi_{n_1}^*\bfPhi_{n_2}^{}}_\Fro^2)^{\frac12},
\end{equation}
respectively.
That is, optimally pack $N$ members in the Grassmannian $\operatorname{Gr}_R(\bbF^{D})$ equipped with one of these two distances.
Here, \smash{$\norm{\bfPhi_{n_1}^*\bfPhi_{n_2}^{}}_2^2$} is the maximum (worst) value of \smash{$\abs{\ip{\bfy_{n_1}}{\bfy_{n_2}}}^2$} over all unit vectors $\bfy_{n_1}\in\calF_{n_1}$ and $\bfy_{n_2}\in\calF_{n_2}$,
whereas \smash{$\frac1{R^2}\norm{\bfPhi_{n_1}^*\bfPhi_{n_2}^{}}_{\Fro}^2$} is the mean (average) such value when such $\bfy_{n_1}$ and $\bfy_{n_2}$ are distributed independently and uniformly.
Small spectral coherence is particularly important in compressed sensing~\cite{EldarKB10}:
block orthogonal matching pursuit, for example, efficiently recovers $N$ vectors $(\bfx_n)_{n=1}^{N}$ in $\bbF^R$ from $\sum_{n=1}^{N}\bfPhi_n\bfx_n$ provided
\smash{$\#\set{n : \bfx_n\neq\bfzero}<\frac12(\mu_{\rms}^{-1}+1)$}.
Such capacity is limited by~\textit{Welch bounds}~\cite{Welch74,StrohmerH03,CalderbankTX15}:
\begin{equation}
\label{eq.Welch bounds}
\mu_{\rms}^2
\geq\mu_{\rmc}^2
\geq\tfrac{NR-D}{D(N-1)}.
\end{equation}
In general,
a finite sequence $(\calF_n)_{n=1}^{N}$ of $R$-dimensional subspaces of $\bbF^D$
is \textit{equi-isoclinic} (EI) if there exists $\varLambda\geq0$ such that $\bfPhi_{n_1}^*\bfPhi_{n_2}^{}\bfPhi_{n_2}^*\bfPhi_{n_1}^{}=\varLambda\brI$ for any distinct $n_1,n_2$,
is \textit{equichordal} (EC) if, more weakly, there exists $\varLambda\geq0$ such that \smash{$\tfrac1R\norm{\bfPhi_{n_1}^*\bfPhi_{n_2}^{}}_{\Fro}^2=\varLambda$} for any distinct $n_1,n_2$,
and is a \textit{tight fusion frame} (TFF) for $\bbF^D$ if $\sum_{n=1}^N\bfPhi_n^{}\bfPhi_n^*=A\brI$ for some $A>0$.
It is well-known~\cite{LemmensS73b,ConwayHS96,StrohmerH03,DhillonHST08,CalderbankTX15} that $(\calF_n)_{n=1}^{N}$ achieves equality in the right-hand inequality of~\eqref{eq.Welch bounds} if and only if it is an \textit{equichordal tight fusion frame} (ECTFF) for $\bbF^D$,
and achieves equality throughout~\eqref{eq.Welch bounds} if and only if it is an \textit{equi-isoclinic tight fusion frame} (EITFF) for $\bbF^D$.
Via~\eqref{eq.spectral and chordal coherence} and~\eqref{eq.spectral and chordal distance},
the inequalities of~\eqref{eq.Welch bounds} become the \textit{simplex bounds}~\cite{ConwayHS96}:
\begin{equation*}
\min_{n_1\neq n_2}\rmd_{\rms}^2(\calF_{n_1},\calF_{n_2})
\leq\min_{n_1\neq n_2}\tfrac1{R}\rmd_{\rmc}^2(\calF_{n_1},\calF_{n_2})
\leq\tfrac{N}{N-1}\tfrac{D-R}{D}.
\end{equation*}
In particular, each EITFF or ECTFF is an optimal Grassmannian packing with respect to the spectral or chordal distance, respectively,
with the former being well-suited for compressed sensing.
Every ECTFF with $R=1$ yields an \textit{equiangular tight frame}~(ETF), and vice versa.

For these and other reasons,
much attention has been paid to the \textit{existence problem}:
for what $\bbF$ and $(D,R,N)$ does a corresponding EITFF or ECTFF exist?
(We caution that in some of the literature, the parameters are presented in $(D,N,R)$ order.)
This problem remains largely open:
see~\cite{FickusMW21} for a survey of EITFFs and ECTFFs,
\cite{FickusM16} for a survey of ETFs in particular,
and~\cite{Brady25,FickusIJM22,FickusIJM23,FickusIJM24,FickusGI25,FallonI25,Gritsenko21,IversonJM24,IversonO25,FickusIJM26,FickusJM26} for some recent developments.
A classical~\cite{LemmensS73b} construction of such objects from \textit{direct sums} of smaller ones is particularly straightforward and well-known~\cite{CasazzaFMYZ11,CalderbankTX15}.

In this paper, we characterize the existence of complex TFFs whose \textit{cross-Gram matrices} $\bfPhi_{n_1}^*\bfPhi_{n_2}^{}$ commute.
In particular, in Section~3,
we show that such TFFs are equivalent to ones that arise from a finite sequence \smash{$\bigparen{(\bfphi_{r,n})_{n=1}^N}_{r=1}^{R}$} of $R$ \textit{unit norm tight frames} (UNTFs) for \smash{$\bbC^{\frac DR}$} via direct sums (Theorems~\ref{thm.SUNTF to ECTFF} and~\ref{thm.commuting}),
and are moreover equivalent to ones that arise as \textit{filter banks} (Theorem~\ref{thm.filter banks}).
In the latter case, for some subgroup $\calG$ of a finite abelian group $\calH$,
the orthonormal columns of each $\bfPhi_n$ arise as the translates of some vector $\bfphi_n\in\bbC^\calH$ that is related to $(\bfphi_{r,n})_{r=1}^R$ via classical \textit{polyphase} theory~\cite{Vaidyanathan93}.
These results generalize some of the main ideas of \cite{BolcskeiHF98,ChebiraFM11}.
More significantly, we show in these same results that such a TFF is EI if and only if each UNTF \textit{layer} $(\bfphi_{r,n})_{n=1}^N$ is an ETF for \smash{$\bbC^{\frac DR}$},
and more generally is EC if and only if these layers are \textit{stratified} in the sense of Definition~\ref{def.SUNTF}.
Notably, this leads to $(D,R,N)$ for which both an EITFF and a filter-bank ECTFF exist,
but no filter-bank EITFF exists (Example~\ref{ex.polyphase}).
This formally justifies some common sense:
though filter bank structure is often imposed in various applications such as remote sensing, wireless communication and machine learning,
doing so might come with a cost to incoherence.
To aid practitioners who would prefer a filter-bank ECTFF over a non-filter-bank EITFF,
we devote Section~4 to a general study of SUNTFs.
In particular, we show in Theorem~\ref{thm.SUNTF} that many results about ETFs generalize to the SUNTF setting,
including the Welch bound~\cite{Welch74,StrohmerH03} and Gerzon bound~\cite{LemmensS73,SustikTDH07}.
In Theorem~\ref{thm.DF to SUNTF},
we generalize the well-known equivalence between \textit{harmonic} ETFs and \textit{difference sets}~\cite{StrohmerH03,XiaZG05,DingF07},
showing that harmonic SUNTFs equate to \textit{difference families} for finite abelian groups.
When combined with Theorem~\ref{thm.SUNTF to ECTFF}, these recover an ECTFF construction technique of~\cite{FickusMW25}.
In Theorem~\ref{thm.Gauss sums},
we moreover generalize the Gauss-sum-based ETFs of~\cite{Renes07,Strohmer08} to the SUNTF setting.
See~\cite{Brinkley25} for preliminary versions of these results.
In the next section, we discuss some notation and previously known concepts that we will use later.

\section{Preliminaries}

To streamline notation, we do not require the indices of our vectors, matrices, etc.\ to be ordered.
Rather, for us here, a \textit{finite sequence} of members of a set $\calX$ is simply a function $\bfx$ from some nonempty finite set $\calN$ into $\calX$.
We usually denote such an $\bfx$ sequentially, that is, as $(x_n)_{n\in\calN}$ where $x_n:=\bfx(n)$ for any $n\in\calN$.
For any such sets $\calX$ and $\calN$, we denote the set of all such finite sequences $(x_n)_{n\in\calN}$ as $\calX^\calN$.

Though we focus here on real and complex Grassmannian packings,
our community occasionally considers generalizations of them to the quaternionic setting~\cite{CohnKM16,Waldron25},
and we accordingly regard vector spaces as right-modules.
(That said, bowing to tradition, we define left-multiplication by a real scalar to be right-multiplication by it.
Since $\bbR$ is the center of the quaternions $\bbH$, it is harmless to do so.)
In particular, for any field $\bbF$ and nonempty finite set $\calN$,
we equip the \textit{standard space} $\bbF^\calN$ with entrywise addition and vector-scalar multiplication,
that is, $(\bfx c)(n):=\bfx(n)c$ for any $\bfx\in\bbF^\calN$ and $c\in\bbF$.
We denote the standard basis for this space as $(\bfdelta_n)_{n\in\calN}$ and so $\bfdelta_n(n)=1$ while $\bfdelta_n(n')=0$ when $n'\neq n$.
It follows that for any nonempty finite sets $\calM$ and $\calN$,
a function $\bfL:\bbF^\calN\rightarrow\bbF^\calM$ is linear if and only if there exists (an \textit{$\calM\times\calN$ matrix}) $\bfA\in\bbF^{\calM\times\calN}$ such that $\bfL(\bfx)=\bfA\bfx$ for any $\bfx\in\bbF^\calN$ where $\bfA\bfx\in\bbF^\calM$, $(\bfA\bfx)(m):=\sum_{n\in\calN}\bfA(m,n)\bfx(n)$.
Here, $\bfA(m,n)=(\bfL\bfdelta_n)(m)$ for any $(m,n)\in\calM\times\calN$.
We usually identify such $\bfL$ and $\bfA$ with each other and denote them by a common symbol.
For a positive integer $N$,
we let $[N]:=\set{n\in\bbZ: 1\leq n\leq N}$ and denote $\bbF^{[N]}$ as $\bbF^N$.
Similarly, for positive integers $M$ and $N$,
an $M\times N$ matrix is a member of $\bbF^{M\times N}:=\bbF^{[M]\times[N]}$.

Now let $\calE$ be a vector space over $\bbF$ and let $\calN$ be a finite set of cardinality $N>0$.
The \textit{synthesis map} of a finite sequence $(\bfphi_n)_{n\in\calN}$ in $\calE$ is $\bfPhi:\bbF^\calN\rightarrow\calE$, $\bfPhi\bfx:=\sum_{n\in\calN}\bfphi_n\bfx(n)$.
Clearly, any such $\bfPhi$ is linear and satisfies $\bfPhi\bfdelta_n=\bfphi_n$ for any $n\in\calN$.
Conversely, since any $\bfx\in\bbF^\calN$ decomposes as $\bfx=\sum_{n\in\calN}\bfdelta_n\bfx(n)$,
any linear map $\bfPhi:\bbF^\calN\rightarrow\calE$ is the synthesis map of the finite sequence $(\bfphi_n)_{n\in\calN}$ in $\calE$ defined by $\bfphi_n:=\bfPhi\bfdelta_n$ for any $n\in\calN$.
The image of this map is $\im(\bfPhi)=\Span(\bfphi_n)_{n\in\calN}$
and $\bfPhi$ is injective if and only if $(\bfphi_n)_{n\in\calN}$ is linearly independent.
In particular, $\bfPhi$ is invertible if and only if $(\bfphi_n)_{n\in\calN}$ is a basis for $\calE$.
When $\calE=\bbF^\calM$, we identify $\bfPhi$ with the $\calM\times\calN$ matrix that has $\bfphi_n$ as its $n$th column.

Now let $\bbF$ be $\bbR$ or $\bbC$,
and denote the conjugate of $c\in\bbF$ as $\overline{c}$.
Let $\calE$ and $\calF$ be Euclidean (finite-dimensional Hilbert) spaces over $\bbF$,
let $\bfL:\calF\rightarrow\calE$ be linear,
and let \smash{$\bfPhi:\bbF^\calN\rightarrow\calE$} be the synthesis map of a finite sequence $(\bfphi_n)_{n\in\calN}$ in $\calE$.
We denote the induced norm (largest singular value) of $\bfL$ as $\norm{\bfL}_2$,
denote its adjoint as $\bfL^*$,
and denote the Frobenius inner product of two such maps as $\ip{\bfL_1}{\bfL_2}_{\Fro}:=\Tr(\bfL_1^*\bfL_2^{})$.
To facilitate generalization to $\bbH$ and collaboration with quantum theorists,
all inner products in this paper are conjugate-linear in their first argument and linear in their second.
In particular, our default inner product on $\bbF^\calN$ is \smash{$\ip{\bfx_1}{\bfx_2}:=\sum_{n\in\calN}\overline{\bfx_1(n)}\bfx_2(n)$},
and so the \textit{analysis map} of $(\bfphi_n)_{n\in\calN}$ is $\bfPhi^*:\calE\rightarrow\bbF^\calN$, \smash{$(\bfPhi^*\bfy)(n)=\ip{\bfphi_n}{\bfy}$}.
As a special case of this,
we identify each vector $\bfphi\in\calE$ with a linear map $\bfphi:\bbF\rightarrow\calE$, $\bfphi(x):=\bfphi x$ whose adjoint is the linear functional
$\bfphi^*:\calE\rightarrow\bbF$, $\bfphi^*\bfy=\ip{\bfphi}{\bfy}$.
The adjoint of $\bfA\in\bbF^{\calM\times\calN}$ is its conjugate-transpose $\bfA^*\in\bbF^{\calN\times\calM}$, \smash{$\bfA^*(n,m)=\overline{\bfA(m,n)}$}.
We make repeated use of the following result,
a short proof of which is given in the appendix.

\begin{lemma}[Folklore]
\label{lem.equal Gram}
If $\calE$, $\widetilde{\calE}$ and $\calF$ are Euclidean spaces and $\bfL:\calF\rightarrow\calE$ and \smash{$\widetilde{\bfL}:\calF\rightarrow\widetilde{\calE}$} are linear surjections then \smash{$\widetilde{\bfL}^*\widetilde{\bfL}=\bfL^*\bfL$} if and only if \smash{$\widetilde{\bfL}=\bfU\bfL$} for some unitary \smash{$\bfU:\calE\rightarrow\widetilde{\calE}$}.
\end{lemma}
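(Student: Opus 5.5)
The reverse implication is immediate, so the plan handles it first and then builds $\bfU$ for the forward one. If $\widetilde{\bfL}=\bfU\bfL$ with $\bfU$ unitary, then $\widetilde{\bfL}^*\widetilde{\bfL}=\bfL^*\bfU^*\bfU\bfL=\bfL^*\bfL$ because $\bfU^*\bfU=\brI$.

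For the forward implication, assume $\widetilde{\bfL}^*\widetilde{\bfL}=\bfL^*\bfL$. The first step is to compare inner products: for any $\bfx_1,\bfx_2\in\calF$,
\begin{equation*}
\ip{\widetilde{\bfL}\bfx_1}{\widetilde{\bfL}\bfx_2}
=\ip{\bfx_1}{\widetilde{\bfL}^*\widetilde{\bfL}\bfx_2}
=\ip{\bfx_1}{\bfL^*\bfL\bfx_2}
=\ip{\bfL\bfx_1}{\bfL\bfx_2}.
\end{equation*}
Taking $\bfx_1=\bfx_2$ shows $\norm{\widetilde{\bfL}\bfx}=\norm{\bfL\bfx}$, so $\ker\widetilde{\bfL}=\ker\bfL$.

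Next I define $\bfU:\calE\rightarrow\widetilde{\calE}$. Given $\bfy\in\calE$, surjectivity of $\bfL$ gives some $\bfx\in\calF$ with $\bfy=\bfL\bfx$, and I set $\bfU\bfy:=\widetilde{\bfL}\bfx$.
\begin{itemize}
\item \emph{Well defined.} If $\bfL\bfx=\bfL\bfx'$, then $\bfx-\bfx'\in\ker\bfL=\ker\widetilde{\bfL}$, so $\widetilde{\bfL}\bfx=\widetilde{\bfL}\bfx'$.
\item \emph{Linear.} This follows directly from the linearity of $\bfL$ and $\widetilde{\bfL}$.
\item \emph{Isometric.} The displayed identity gives $\ip{\bfU\bfy_1}{\bfU\bfy_2}=\ip{\bfy_1}{\bfy_2}$, so $\bfU$ preserves inner products and in particular is injective.
\item \emph{Surjective.} Its image is $\im(\widetilde{\bfL})=\widetilde{\calE}$, because $\widetilde{\bfL}$ is surjective.
\end{itemize}
An inner-product-preserving linear bijection is unitary, so $\bfU$ is unitary, and $\bfU\bfL=\widetilde{\bfL}$ holds by construction.

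No step is a real obstacle. The one point needing care is well-definedness of $\bfU$, which rests on the kernel equality. Surjectivity of both maps is exactly what makes $\bfU$ defined on all of $\calE$ and onto $\widetilde{\calE}$. Without it one would only obtain a partial isometry between $\im\bfL$ and $\im\widetilde{\bfL}$.
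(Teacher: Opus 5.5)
Your proof is correct, but it constructs $\bfU$ differently from the paper. You define $\bfU$ pointwise by lifting through $\bfL$. Well-definedness comes from $\ker\widetilde{\bfL}=\ker\bfL$, which follows from $\norm{\widetilde{\bfL}\bfx}=\norm{\bfL\bfx}$. Preservation of inner products comes from the displayed identity, and surjectivity of $\bfU$ comes directly from surjectivity of $\widetilde{\bfL}$.

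The paper instead writes down the closed-form map $\bfU:=\widetilde{\bfL}\bfL^*(\bfL\bfL^*)^{-1}$, which needs $\bfL\bfL^*$ to be invertible; this holds because $\bfL$ is onto. It then checks $\bfU^*\bfU=\brI$ and $\bfU^*\widetilde{\bfL}=\bfL$ by adjoint algebra alone. A separate rank count, $\dim(\calE)=\rank(\bfL^*\bfL)=\rank(\widetilde{\bfL}^*\widetilde{\bfL})=\dim(\widetilde{\calE})$, upgrades the isometry $\bfU$ to a unitary, after which $\bfU\bfL=\bfU\bfU^*\widetilde{\bfL}=\widetilde{\bfL}$.

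The two maps are in fact the same. $\bfL^*(\bfL\bfL^*)^{-1}\bfL$ is the orthogonal projection onto $(\ker\bfL)^\perp$, and $\widetilde{\bfL}$ annihilates $\ker\bfL$. Your route avoids the inverse and the dimension count, and it makes clear exactly where each surjectivity hypothesis enters. The paper's route gives an explicit formula for $\bfU$ and a verification that consists purely of operator identities.
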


We call $\bfL$ an \textit{isometry} if $\norm{\bfL\bfy}=\norm{\bfy}$ for any $\bfy\in\calF$,
namely if and only if $\bfL^*\bfL=\brI$.
When this occurs, $\bfL^*$ is a surjection and $\bfL\bfL^*$ is the (orthogonal) projection (operator) onto $\im(\bfL)$.
By Lemma~\ref{lem.equal Gram},
an isometry \smash{$\widetilde{\bfL}:\widetilde{\calF}\rightarrow\calE$} has this same image if and only if
\smash{$\widetilde{\bfL}=\bfL\bfU^*$} for some unitary \smash{$\bfU:\calF\rightarrow\widetilde{\calF}$}.
An invertible isometry is unitary, and vice versa.
The \textit{frame operator} of $(\bfphi_n)_{n\in\calN}$ is $\bfPhi\bfPhi^*=\sum_{n\in\calN}\bfphi_n^{}\bfphi_n^*$.
We call $(\bfphi_n)_{n\in\calN}$ a \textit{frame for $\calE$} if $\bfPhi\bfPhi^*:\calE\rightarrow\calE$ is invertible,
namely if and only if $(\bfphi_n)_{n\in\calN}$ spans $\calE$.
The \textit{Gram matrix} of $(\bfphi_n)_{n\in\calN}$ is $\bfPhi^*\bfPhi$.
Its $(n_1,n_2)$th entry is
$(\bfPhi^*\bfPhi)(n_1,n_2)
=\ip{\bfdelta_{n_1}}{\bfPhi^*\bfPhi\bfdelta_{n_2}}
=\ip{\bfphi_{n_1}}{\bfphi_{n_2}}$.
Thus, $(\bfphi_n)_{n\in\calN}$ is orthonormal if and only if $\bfPhi$ is an isometry,
and is an orthonormal basis for $\calE$ if and only if $\bfPhi:\bbF^\calN\rightarrow\calE$ is unitary.

Any Gram matrix is positive semidefinite (PSD).
Conversely, any PSD matrix $\bfG\in\bbF^{\calN\times\calN}$ factors as $\bfG=\bfU\bfLambda\bfU^*$ where $\bfU$ is the (unitary) synthesis map of an ONB \smash{$(\bfu_n)_{n=1}^{N}$} for $\bbF^\calN$ that consists of eigenvectors of $\bfG$ and \smash{$\bfLambda\in\bbF^{N\times N}$} is the diagonal matrix whose diagonal entries \smash{$(\lambda_n)_{n=1}^{N}$} are their corresponding eigenvalues, without loss of generality sorted in nonincreasing order.
It follows that any nonzero such $\bfG$ is the Gram matrix $\bfPhi^*\bfPhi$ of the frame $(\bfphi_n)_{n\in\calN}$ for $\bbF^{\,\rank(\bfG)}$ defined by
\smash{$\bfphi_n(d)=\sqrt{\lambda_d}\,\overline{\bfu_d(n)}$} for any $d\in[\rank(\bfG)]$.
(Here, $\bfPhi\bfPhi^*$ is diagonal and its $d$th diagonal entry is $\lambda_d$.)
By Lemma~\ref{lem.equal Gram},
it follows that any frame $(\widetilde{\bfphi}_n)_{n\in\calN}$ for $\widetilde{\calE}$ with Gram matrix $\bfG$ is a unitary mapping of $(\bfphi_n)_{n\in\calN}$ and, moreover,
that any putatively complex frame with a real-valued Gram matrix is a unitary mapping of a real frame.

We say $(\bfphi_n)_{n\in\calN}$ is \textit{unit norm} if $\norm{\bfphi_n}=1$ for any $n\in\calN$,
and is \textit{equiangular} if it is unit norm and the value of $\abs{\ip{\bfphi_{n_1}}{\bfphi_{n_2}}}$ is constant over all distinct $n_1,n_2\in\calN$.
We call $(\bfphi_n)_{n\in\calN}$ a \textit{tight frame for $\calE$} if $\bfPhi\bfPhi^*=A\brI$ for some \textit{frame constant} $A>0$.
When this occurs,
$A\dim(\calE)
=\Tr(A\brI)
=\Tr(\bfPhi\bfPhi^*)
=\Tr(\bfPhi^*\bfPhi)
=\sum_{n\in\calN}\norm{\bfphi_n}^2$
and $\frac1{A}\bfPhi^*\bfPhi$ is a projection onto a space of dimension $\rank(\bfPhi)=\dim(\calE)$.
Conversely,
by the previous paragraph,
any nonzero self-adjoint $\bfG\in\bbF^{\calN\times\calN}$ that satisfies $\bfG^2=A\bfG$ for some $A>0$ is necessarily the Gram matrix of a tight frame for a space of dimension $\rank(\bfG)$.
It follows that if $(\bfphi_n)_{n\in\calN}$ is a UNTF for $\calE$ then \smash{$A=\frac{N}{D}\geq1$} where $D:=\dim(\calE)$.
Here, if $N>D$, it moreover follows that $\frac{N}{N-D}(\brI-\frac{D}{N}\bfPhi^*\bfPhi)$ is the Gram matrix \smash{$\check{\bfPhi}^*\check{\bfPhi}$} of a UNTF
\smash{$(\check{\bfphi}_n)_{n\in\calN}$} for a Euclidean space \smash{$\check{\calE}$} over $\bbF$ of dimension $N-D$ that is unique up to unitary transformations, dubbed the \textit{Naimark complement} of $(\bfphi_n)_{n\in\calN}$.
In particular, if $(\bfphi_n)_{n\in\calN}$ is an ETF for $\calE$ with $N>D$ then
\smash{$(\check{\bfphi}_n)_{n\in\calN}$} is an ETF for \smash{$\check{\calE}$}.

\subsection{Fusion frames}

Let $D$, $R$ and $N$ be positive integers such that $D\geq R$,
let $\calE$ be a Euclidean space over $\bbF$ of dimension $D$,
let $\calR$ and $\calN$ be sets of cardinality $R$ and $N$, respectively,
and let $(\calF_n)_{n\in\calN}$ be a finite sequence of $R$-dimensional subspaces of $\calE$.
For each $n\in\calN$, let $\bfPhi_n:\bbF^\calR\rightarrow\calE$ be the synthesis map of an ONB $(\bfphi_{n,r})_{r\in\calR}$ for $\calF_n$.
Thus, $\bfPhi_n$ is an isometry onto $\calF_n$ and $\bfPi_n:=\bfPhi_n^{}\bfPhi_n^*=\sum_{r\in\calR}\bfphi_{n,r}^{}\bfphi_{n,r}^{*}$ is the projection onto $\calF_n$.
By Lemma~\ref{lem.equal Gram}, each $\bfPhi_n$ is unique up to right-multiplication by a unitary in \smash{$\bbF^{\calR\times\calR}$}.
The \textit{fusion frame operator} of $(\calF_n)_{n\in\calN}$ is the sum \smash{$\sum_{n\in\calN}\bfPi_n=\sum_{(n,r)\in\calN\times\calR}\bfphi_{n,r}^{}\bfphi_{n,r}^{*}$} of these projections.
It is thus the (ordinary) frame operator of the \textit{concatenation} $(\bfphi_{n,r})_{(n,r)\in\calN\times\calR}$ of these ONBs.
That is, $\bfPhi\bfPhi^*=\sum_{n\in\calN}\bfPi_n$ where $\bfPhi:\bbF^{\calN\times\calR}\rightarrow\calE$ is the synthesis map of this concatenation.
Since $\bfPhi\bfx=\sum_{n\in\calN}\bfPhi_n\bfx_n$ for any $\bfx\in\bbF^{\calN\times\calR}$ where $(\bfx_n)_{n\in\calN}\in(\bbF^\calR)^\calN$ is defined by $\bfx_n(r):=\bfx(n,r)$,
we call $\bfPhi$ the \textit{fusion synthesis map} of $(\bfPhi_n)_{n\in\calN}$.
Since $(\bfPhi^*\bfPhi)\bigparen{(n_1,r_1),(n_2,r_2)}
=\ip{\bfphi_{n_1,r_1}}{\bfphi_{n_2,r_2}}
=(\bfPhi_{n_1}^{*}\bfPhi_{n_2}^{})(r_1,r_2)$
for any $n_1,n_2\in\calN$, $r_1,r_2\in\calR$,
we regard the \textit{fusion Gram matrix} $\bfPhi^*\bfPhi$ as a block matrix, in particular as the $\calN\times\calN$ array of $\calR\times\calR$ blocks whose $(n_1,n_2)$th block is the \textit{cross-Gram} matrix $\bfPhi_{n_1}^{*}\bfPhi_{n_2}^{}\in\bbF^{\calR\times\calR}$ of the two corresponding isometries.
Since each $\bfPhi_n$ is an isometry,
each diagonal block is $\brI$.

Since each cross-Gram matrix $\bfPhi_{n_1}^{*}\bfPhi_{n_2}^{}$ has
$\norm{\bfPhi_{n_1}^*\bfPhi_{n_2}^{}}_2
\leq\norm{\bfPhi_{n_1}}_2\norm{\bfPhi_{n_2}}_2
=1$,
its singular values can be written as \smash{$(\cos\theta_{n_1,n_2,r})_{r=1}^{R}$} for some nondecreasing finite sequence $(\theta_{n_1,n_2,r})_{r=1}^{R}$ in $[0,\frac{\pi}{2}]$.
As singular values are preserved under multiplication by unitaries,
this finite sequence $(\theta_{n_1,n_2,r})_{r=1}^{R}$ of \textit{principal angles between $\calF_{n_1}$ and $\calF_{n_2}$} is invariant to one's choice of orthonormal bases for these subspaces.
For any $n_1,n_2\in\calN$,
note that
\smash{$\frac1{\sqrt{R}}\norm{\bfPhi_{n_1}^*\bfPhi_{n_2}^{}}_\Fro
\leq\norm{\bfPhi_{n_1}^*\bfPhi_{n_2}^{}}_2$} and that equality holds here if and only if $\calF_{n_1}$ and $\calF_{n_2}$ are \textit{isoclinic} in the sense that \smash{$\theta_{n_1,n_2,r}$} is constant over all $r\in[R]$.
Notably, \smash{$\theta_{n_1,n_2,r}$} is constant over all distinct $n_1,n_2\in\calN$ and all $r\in[R]$ if and only if $(\calF_n)_{n\in\calN}$ is EI,
namely if and only if there exists $\varLambda\geq0$ such that \smash{$\bfPhi_{n_1}^*\bfPhi_{n_2}^{}\bfPhi_{n_2}^*\bfPhi_{n_1}^{}=\varLambda\brI$}
(or equivalently, \smash{$\bfPi_{n_1}\bfPi_{n_2}\bfPi_{n_1}=\varLambda\bfPi_{n_1}$}) for any distinct $n_1,n_2\in\calN$.
More generally, $(\calF_n)_{n\in\calN}$ is EC if there exists a \textit{chordal constant} $\varLambda\geq0$ such that
\smash{$\frac1{R}\Tr(\bfPi_{n_1}\bfPi_{n_2})
=\frac1{R}\norm{\bfPhi_{n_1}^*\bfPhi_{n_2}^{}}_\Fro^2
=\varLambda$}
for any distinct $n_1,n_2\in\calN$.
The (EC version of the) \textit{Gerzon bound}~\cite{LemmensS73,SustikTDH07} gives that if $\calE$ contains $N$ nonidentical EC subspaces then
\begin{equation}
\label{eq.Gerzon}
N
\leq\dim_\bbR\set{\bfL:\calE\rightarrow\calE\ |\ \bfL^*=\bfL}
=D+\tfrac{D(D-1)}{2}[\bbF:\bbR]
=\left\{\begin{array}{cl}
\tfrac12D(D+1),&\ \bbF=\bbR,\\
D^2,&\ \bbF=\bbC.
\end{array}\right.
\end{equation}
For the sake of completeness, a proof of this fact is given in the appendix.
Zauner famously conjectured~\cite{Zauner99} that it can be met for any $D\geq 2$ provided $\bbF=\bbC$ and $R=1$.
For refinements to the Gerzon bound in the special case where $(\calF_n)_{n\in\calN}$ is assumed to be EI,
see~\cite{LemmensS73b,FickusGI25,IversonO25}.

We call \smash{$(\calF_n)_{n\in\calN}$} a TFF for $\calE$ if \smash{$\sum_{n\in\calN}\bfPi_n=A\brI$} for some $A>0$.
This occurs if and only if \smash{$(\bfphi_{n,r})_{(n,r)\in\calN\times\calR}$} is a UNTF for $\calE$ and so necessarily \smash{$A=\frac{NR}{D}\geq 1$}.
Assuming for the moment that this holds with $NR>D$,
this UNTF has a Naimark complement \smash{$(\check{\bfphi}_{n,r})_{(n,r)\in\calN\times\calR}$} that is a UNTF for a Euclidean space \smash{$\check{\calE}$} of dimension $NR-D$ that is uniquely defined (up to unitary transformations) by having \smash{$\check{\bfPhi}^*\check{\bfPhi}=\frac{NR}{NR-D}(\brI-\frac{D}{NR}\bfPhi^*\bfPhi)$} as its Gram matrix.
Comparing diagonal blocks gives that for each $n\in\calN$, the synthesis map $\check{\bfPhi}_n$ of \smash{$(\check{\bfphi}_{n,r})_{r\in\calR}$} satisfies
\smash{$\check{\bfPhi}_{n}^{*}\check{\bfPhi}_{n}^{}=\frac{NR}{NR-D}(\brI-\frac{D}{NR}\brI)=\brI$} and so is an isometry onto \smash{$\check{\calF}_n:=\Span(\check{\bfphi}_{n,r})_{r\in\calR}$}.
Comparing off-diagonal blocks, it follows that \smash{$(\check{\bfPhi}_n)_{n\in\calN}$} is a finite sequence of isometries for a TFF \smash{$(\check{\calF}_n)_{n\in\calN}$} for \smash{$\check{\calE}$} with
\smash{$\check{\bfPhi}_{n_1}^{*}\check{\bfPhi}_{n_2}^{}=-\frac{D}{NR-D}\bfPhi_{n_1}^{*}\bfPhi_{n_2}^{}$} for any distinct $n_1,n_2\in\calN$.
In particular, \smash{$(\check{\calF}_n)_{n\in\calN}$} is an ECTFF or EITFF for \smash{$\check{\calE}$} if
\smash{$(\calF_n)_{n\in\calN}$} is an ECTFF or EITFF for $\calE$, respectively.
When this occurs, we call \smash{$(\check{\bfPhi}_n)_{n\in\calN}$} a Naimark complement of \smash{$(\bfPhi_n)_{n\in\calN}$}.

Some of our new results here rely upon the Welch bounds~\eqref{eq.Welch bounds} and so, for the sake of completeness,
we provide proofs of the latter in the appendix.
In the special case where $R=1$,
we regard each isometry as $\bfPhi_n:\bbF\rightarrow\calE$, $\bfPhi_n x:=\bfphi_n x$ for some unit norm vector $\bfphi_n\in\calE$.
Here,
$\sum_{n\in\calN}\bfPi_n=\sum_{n\in\calN}\bfphi_{n}^{}\bfphi_{n}^{*}$ and each cross-Gram matrix $\bfPhi_{n_1}^{*}\bfPhi_{n_2}^{}$ has $\ip{\bfphi_{n_1}}{\bfphi_{n_2}}$ as its sole entry,
implying
\smash{$\norm{\bfPhi_{n_1}^{*}\bfPhi_{n_2}^{}}_\Fro
=\abs{\ip{\bfphi_{n_1}}{\bfphi_{n_2}}}
=\norm{\bfPhi_{n_1}^{*}\bfPhi_{n_2}^{}}_2$}.
Thus, in this setting, $(\calF_n)_{n\in\calN}$ is a TFF for $\calE$ if and only if $(\bfphi_{n})_{n\in\calN}$ is a UNTF for $\calE$,
and the following are equivalent: $(\calF_n)_{n\in\calN}$ is EC; $(\calF_n)_{n\in\calN}$ is EI; $(\bfphi_{n})_{n\in\calN}$ is equiangular.
Here, when $N>1$, both quantities in~\eqref{eq.spectral and chordal coherence} reduce to the \textit{coherence}
\smash{$\mu:=\max_{n_1\neq n_2}\abs{\ip{\bfphi_{n_1}}{\bfphi_{n_2}}}$} of \smash{$(\bfphi_n)_{n\in\calN}$},
and both bounds in~\eqref{eq.Welch bounds} reduce to the classical Welch bound~\cite{Welch74},
namely that \smash{$\mu^2\geq\frac{N-D}{D(N-1)}$},
which holds with equality if and only if \smash{$(\bfphi_{n})_{n\in\calN}$} is an ETF for $\calE$.
We shall also make use  of the following result.

\begin{lemma}[Folklore]
\label{lem.equivalence}
Let $\bbF$ be $\bbR$ or $\bbC$,
let $\calN$, $\calR$ and \smash{$\widetilde{\calR}$} be nonempty finite sets,
and let $\calE$ and \smash{$\widetilde{\calE}$} be Euclidean spaces over $\bbF$ of equal dimension.
For each $n\in\calN$, let $\calF_n$ and \smash{$\widetilde{\calF}_n$} be the images of isometries \smash{$\bfPhi_n:\bbF^{\calR}\rightarrow\calE$} and \smash{$\widetilde{\bfPhi}_n:\bbF^{\widetilde{\calR}}\rightarrow\widetilde{\calE}$}, respectively.
The following are equivalent:
\begin{enumerate}
\romani

\item
$\exists$ a unitary
\smash{$\bfU:\calE\rightarrow\widetilde{\calE}$}
such that
\smash{$\widetilde{\calF}_n=\bfU(\calF_n)$} for any $n\in\calN$.
\smallskip

\item
$\exists$ a unitary
\smash{$\bfU:\calE\rightarrow\widetilde{\calE}$}
such that
\smash{$\widetilde{\bfPhi}_n^{}\widetilde{\bfPhi}_n^{*}=\bfU\bfPhi_n^{}\bfPhi_n^{*}\bfU^*$} for any $n\in\calN$.
\smallskip

\item
$\exists$ unitaries
\smash{$\bfU:\calE\rightarrow\widetilde{\calE}$}
and
\smash{$(\bfV_n)_{n\in\calN}$} in \smash{$\bbF^{\widetilde{\calR}\times\calR}$}
such that
\smash{$\widetilde{\bfPhi}_n^{}=\bfU\bfPhi_n^{}\bfV_n^*$} for any $n\in\calN$.
\smallskip

\item
$\exists$ unitaries
\smash{$(\bfV_n)_{n\in\calN}$} in \smash{$\bbF^{\widetilde{\calR}\times\calR}$}
such that
\smash{$\widetilde{\bfPhi}_{n_1}^{*}\widetilde{\bfPhi}_{n_2}^{}
=\bfV_{n_1}^{}\bfPhi_{n_1}^{*}\bfPhi_{n_2}^{}\bfV_{n_2}^{*}$}
for any $n_1,n_2\in\calN$.

\end{enumerate}
In fact, if (i), (ii), (iii) or (iv) holds for a particular such $\bfU$ or \smash{$(\bfV_n)_{n\in\calN}$},
then (i)--(iv) hold with that particular $\bfU$ or \smash{$(\bfV_n)_{n\in\calN}$}, respectively.
\end{lemma}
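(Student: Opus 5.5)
I would prove the cycle (i)$\Rightarrow$(ii)$\Rightarrow$(iii)$\Rightarrow$(iv)$\Rightarrow$(i), keeping track of the particular $\bfU$ or $(\bfV_n)_{n\in\calN}$ at each step, so that the final "in fact" clause comes for free.

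For (i)$\Rightarrow$(ii), fix $n$ and note that $\bfU\bfPhi_n^{}\bfPhi_n^*\bfU^*$ is self-adjoint and idempotent, since $\bfU^*\bfU=\brI$. Its image is $\bfU(\calF_n)=\widetilde{\calF}_n$. So it is the orthogonal projection onto $\widetilde{\calF}_n$, which is $\widetilde{\bfPhi}_n^{}\widetilde{\bfPhi}_n^*$.

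For (ii)$\Rightarrow$(iii), the isometries $\widetilde{\bfPhi}_n$ and $\bfU\bfPhi_n$ have equal projections, hence the same image. Here I apply the consequence of Lemma~\ref{lem.equal Gram} stated in the text: two isometries with a common image differ by a right unitary factor. That factor is $\bfV_n^*$ for some unitary $\bfV_n:\bbF^\calR\rightarrow\bbF^{\widetilde{\calR}}$, giving $\widetilde{\bfPhi}_n=\bfU\bfPhi_n\bfV_n^*$. As a side effect this forces $|\calR|=|\widetilde{\calR}|$, since the images have equal dimension. Concretely, one can take $\bfV_n:=\widetilde{\bfPhi}_n^*\bfU\bfPhi_n$ and check unitarity directly using the equal projections.

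For (iii)$\Rightarrow$(iv), compute
\[
\widetilde{\bfPhi}_{n_1}^*\widetilde{\bfPhi}_{n_2}^{}=\bfV_{n_1}^{}\bfPhi_{n_1}^*\bfU^*\bfU\bfPhi_{n_2}^{}\bfV_{n_2}^*=\bfV_{n_1}^{}\bfPhi_{n_1}^*\bfPhi_{n_2}^{}\bfV_{n_2}^*.
\]

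The main step is (iv)$\Rightarrow$(i), since $\bfU$ must be manufactured. Let $\bfPhi$ and $\widetilde{\bfPhi}$ be the fusion synthesis maps, and let $\bfV$ be the block-diagonal unitary $\bbF^{\calN\times\calR}\rightarrow\bbF^{\calN\times\widetilde{\calR}}$ with diagonal blocks $\bfV_n$. Condition (iv) says exactly that
\[
\widetilde{\bfPhi}^*\widetilde{\bfPhi}=\bfV\bfPhi^*\bfPhi\bfV^*=(\bfPhi\bfV^*)^*(\bfPhi\bfV^*).
\]
The obstacle is that Lemma~\ref{lem.equal Gram} requires surjections, and the maps need not span $\calE$ or $\widetilde{\calE}$. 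I would handle this by restricting codomains to the images $\calS:=\im(\bfPhi)$ and $\widetilde{\calS}:=\im(\widetilde{\bfPhi})$. Lemma~\ref{lem.equal Gram} then gives a unitary $\bfU_0:\calS\rightarrow\widetilde{\calS}$ with $\widetilde{\bfPhi}=\bfU_0\bfPhi\bfV^*$. Equal Grams force $\dim\calS=\dim\widetilde{\calS}$, and since $\dim\calE=\dim\widetilde{\calE}$, the orthogonal complements also have equal dimension. So $\bfU_0$ extends to a unitary $\bfU:\calE\rightarrow\widetilde{\calE}$ by choosing any unitary between $\calS^\perp$ and $\widetilde{\calS}^\perp$.

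Restricting $\widetilde{\bfPhi}=\bfU\bfPhi\bfV^*$ to the $n$th block gives $\widetilde{\bfPhi}_n=\bfU\bfPhi_n\bfV_n^*$, with the same $(\bfV_n)_{n\in\calN}$ as in (iv). Taking images gives (i), since $\bfV_n^*$ is invertible.

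For the "in fact" clause, given $\bfU$ in (i) or (ii), the chain above produces (iii) and hence (iv) with that same $\bfU$. Given $(\bfV_n)_{n\in\calN}$ in (iii) or (iv), the last step produces a $\bfU$ realizing (iii), hence (i) and (ii), with those same $\bfV_n$. For (i), (ii) and (iii) with a given $\bfU$, the implications are direct, and the $\bfV_n$ from the (ii)$\Rightarrow$(iii) step satisfy (iv).
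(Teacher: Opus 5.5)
Your proposal is correct and follows essentially the same route as the paper. You get (i)$\Leftrightarrow$(ii) from uniqueness of orthogonal projections. You get (ii)$\Rightarrow$(iii) from Lemma~\ref{lem.equal Gram}; the paper applies it directly to $\bfPhi_n^*\bfU^*$ and $\widetilde{\bfPhi}_n^*$, which is equivalent to your common-image corollary or your explicit $\bfV_n:=\widetilde{\bfPhi}_n^*\bfU\bfPhi_n$. You get (iv)$\Rightarrow$(iii) by applying Lemma~\ref{lem.equal Gram} to the fusion synthesis maps with codomains restricted to their images, then extending $\bfU$ using $\dim(\calE)=\dim(\widetilde{\calE})$. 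Your bookkeeping for the ``in fact'' clause is also sound, including the direct (iii)$\Rightarrow$(i) with the same $\bfU$.
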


For the sake of completeness, a proof of Lemma~\ref{lem.equivalence} is given in the appendix.
In this context, we say that \smash{$(\widetilde{\calF}_n)_{n\in\calN}$} is \textit{equivalent} to \smash{$(\calF_n)_{n\in\calN}$} if (i) holds,
and say that \smash{$(\widetilde{\bfPhi}_n)_{n\in\calN}$} is equivalent to \smash{$(\bfPhi_n)_{n\in\calN}$} if (iii) holds.
This notion of equivalence preserves equi-isoclinism, equichordality and tightness.
As such, the existence of an (EC/EI) TFF depends solely on the underlying field $\bbF$,
the dimension $D$ of the ambient space $\calE$,
and the dimension $R$ and number $N$ of subspaces $(\calF_n)_{n\in\calN}$.
The simplifying assumptions used in the introduction,
namely that $\calE=\bbF^D$, $\calR=[R]$ and $\calN=[N]$,
are thus made without loss of generality.
As such, we denote such objects as \smash{(EC/EI) $\TFF_\bbF(D,R,N)$}.
Similarly, we denote an $N$-vector UNTF or ETF for a Euclidean space over $\bbF$ of dimension $D$ as an \smash{$\UNTF_\bbF(D,N)$} or \smash{$\ETF_\bbF(D,N)$}, respectively.

\section{The coherence of filter banks}

For any integer $M>0$, tensoring the finite sequence of projections $(\bfPi_n)_{n\in\calN}$ of a (EC/EI) $\TFF_\bbF(D,R,N)$ with an identity matrix $\brI$ of size $M$ yields a (EC/EI) $\TFF_\bbF(MD,MR,N)$~\cite{LemmensS73b,CasazzaFMYZ11,CalderbankTX15}.
Moreover, entrywise application of the map \smash{$u+\rmi v\mapsto\left[\begin{smallmatrix}u&-v\\v&\phm u\end{smallmatrix}\right]$}
converts a (EC/EI) $\TFF_\bbC(D,R,N)$ into a (EC/EI) $\TFF_\bbR(2D,2R,N)$,
and the map
\smash{$u+\rmj v\mapsto\left[\begin{smallmatrix}u&-\overline{v}\\v&\phm\overline{u}\end{smallmatrix}\right]$} similarly converts a (EC/EI) $\TFF_\bbH(D,R,N)$ into a (EC/EI) $\TFF_\bbC(2D,2R,N)$~\cite{Hoggar77,Waldron25}.
Our work here relies on the following unification, generalization and elaboration of these ideas \`{a} la~\cite{FickusMW21}:

\begin{lemma}[Folklore]
\label{lem.FF direct sums}
Let $\bbF$ be $\bbR$ or $\bbC$ and let $\calM$ and $\calN$ be nonempty finite sets.
For each $m\in\calM$, let $\calR_m$ be a nonempty finite set and let $\calE_m$ be a Euclidean space over $\bbF$.
For each $m\in\calM$ and $n\in\calN$ let $\calF_{m,n}$ be the image of an isometry $\bfPhi_{m,n}:\bbF^{\calR_m}\rightarrow\calE_m$.
Let \smash{$\widehat{\calE}:=\oplus_{m\in\calM}\calE_m$} and
identify \smash{$\oplus_{m\in\calM}\bbF^{\calR_m}$} with \smash{$\bbF^{\widehat{\calR}}$} where \smash{$\widehat{\calR}:=\sqcup_{m\in\calM}(\set{m}\times\calR_m)$}.
For each $n\in\calN$, let \smash{$\widehat{\calF}_n$} be the image of \smash{$\widehat{\bfPhi}_{n}:\bbF^{\widehat{\calR}}\rightarrow\widehat{\calE}$},
\smash{$\widehat{\bfPhi}_{n}:=\oplus_{m\in\calM}\,\bfPhi_{m,n}$}.
\begin{enumerate}
\alphi

\item
$\widehat{\bfPhi}_{n_1}^{*}\widehat{\bfPhi}_{n_2}^{}
=\oplus_{m\in\calM}\,\bfPhi_{m,n_1}^{*}\bfPhi_{m,n_2}^{}$
for any $n_1,n_2\in\calN$.
In particular, each $\widehat{\bfPhi}_{n}$ is an isometry.

\item
$\max_{n_1\neq n_2}\norm{\widehat{\bfPhi}_{n_1}^{*}\widehat{\bfPhi}_{n_2}^{}}_2
=\max_{m\in\calM}\max_{n_1\neq n_2}\norm{\bfPhi_{m,n_1}^{*}\bfPhi_{m,n_2}^{}}_2$,
provided $\#(\calN)>1$.

\item
$(\widehat{\calF}_n)_{n\in\calN}$ is a TFF for $\widehat{\calE}$ if and only if every $(\calF_{m,n})_{n\in\calN}$ is a TFF for $\calE_m$ and
\begin{equation}
\label{eq.D/R consistency}
\tfrac{\dim(\calE_{m_1})}{\#(\calR_{m_1})}
=\tfrac{\dim(\calE_{m_2})}{\#(\calR_{m_2})}
\quad\forall\,m_1,m_2\in\calM.
\end{equation}
When this occurs and \smash{$\frac{\dim(\calE_{m})}{\#(\calR_{m})}<\#(\calN)$} for any $m\in\calM$,
each \smash{$(\bfPhi_{m,n})_{n\in\calN}$} has a Naimark complement \smash{$(\check{\bfPhi}_{m,n})_{n\in\calN}$}
and \smash{$(\oplus_{m\in\calM}\,\check{\bfPhi}_{m,n})_{n\in\calN}$} is a Naimark complement of
\smash{$(\widehat{\bfPhi}_n)_{n\in\calN}$}.

\item
$(\widehat{\calF}_n)_{n\in\calN}$ is EI if and only if every $(\calF_{m,n})_{n\in\calN}$ is EI with a common chordal constant $\varLambda\geq0$, namely if and only if
$\bfPhi_{m,n_1}^{*}\bfPhi_{m,n_2}^{}\bfPhi_{m,n_2}^{*}\bfPhi_{m,n_1}^{}=\varLambda\brI$ for any $m\in\calM$ and distinct $n_1,n_2\in\calN$.

\item
$(\widehat{\calF}_n)_{n\in\calN}$ is an EITFF for $\widehat{\calE}$ if and only if every $(\calF_{m,n})_{n\in\calN}$ is an EITFF for $\calE_m$ and~\eqref{eq.D/R consistency} holds.

\item
$(\widehat{\calF}_n)_{n\in\calN}$ is EC if $(\calF_{m,n})_{n\in\calN}$ is EC for any $m\in\calM$.

\item
$(\widehat{\calF}_n)_{n\in\calN}$ is an ECTFF for $\widehat{\calE}$ if every $(\calF_{m,n})_{n\in\calN}$ is an ECTFF for $\calE_m$ and~\eqref{eq.D/R consistency} holds.

\end{enumerate}
\end{lemma}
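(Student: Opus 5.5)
Everything reduces to part (a), which follows from block-diagonal structure. Since $\widehat{\bfPhi}_n=\oplus_{m}\bfPhi_{m,n}$ maps the $m$th summand $\bbF^{\calR_m}$ into $\calE_m$, its adjoint is $\oplus_m\bfPhi_{m,n}^*$. Hence $\widehat{\bfPhi}_{n_1}^*\widehat{\bfPhi}_{n_2}^{}=\oplus_m\bfPhi_{m,n_1}^*\bfPhi_{m,n_2}^{}$. Taking $n_1=n_2$ gives $\oplus_m\brI=\brI$, so each $\widehat{\bfPhi}_n$ is an isometry.

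For (b), I would use that the operator norm of a block-diagonal map is the maximum of the blocks' norms, and then swap the two maxima. For (c), I would compute the fusion frame operator as $\sum_n\widehat{\bfPhi}_n^{}\widehat{\bfPhi}_n^*=\oplus_m\sum_n\bfPhi_{m,n}^{}\bfPhi_{m,n}^*$. This equals $A\brI$ if and only if each summand equals $A\brI$ with the same $A$. Each summand being tight forces $A=N\#(\calR_m)/\dim(\calE_m)$, so a common $A$ is exactly~\eqref{eq.D/R consistency}. For the Naimark statement, I would note that the Gram matrix of the concatenation for $\widehat{\bfPhi}$ is, up to reindexing $\calN\times\widehat{\calR}$, the direct sum over $m$ of the fusion Gram matrices $\bfG_m$. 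Because the ratio $D/R=\dim(\calE_m)/\#(\calR_m)$ is common to all $m$, the complement formula $\frac{NR}{NR-D}(\brI-\frac{D}{NR}\bfG)$ has the same coefficients for each $m$, namely $\frac{N}{N-D/R}$ and $\frac{D/R}{N}$. It therefore acts blockwise, giving $\oplus_m\check{\bfG}_m$. This is the Gram matrix of $(\oplus_m\check{\bfPhi}_{m,n})_n$, and Naimark complements are defined via Gram matrices (unique up to unitaries by Lemma~\ref{lem.equal Gram}), so this identifies the complement. The hypothesis $D/R<N$ ensures each complement exists.

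For (d), I would compute $\widehat{\bfPhi}_{n_1}^*\widehat{\bfPhi}_{n_2}^{}\widehat{\bfPhi}_{n_2}^*\widehat{\bfPhi}_{n_1}^{}=\oplus_m\bfPhi_{m,n_1}^*\bfPhi_{m,n_2}^{}\bfPhi_{m,n_2}^*\bfPhi_{m,n_1}^{}$ using (a). A direct sum equals $\varLambda\brI$ exactly when every block equals $\varLambda\brI$ with the same $\varLambda$. For (e), I would combine (c) and (d); the only point to check is that the common-constant requirement in (d) is automatic when~\eqref{eq.D/R consistency} holds. This is the step I would treat most carefully. Each $(\calF_{m,n})_n$ being an EITFF meets the Welch bound~\eqref{eq.Welch bounds} with equality, so its constant is $\varLambda_m=\frac{NR_m-D_m}{D_m(N-1)}=\frac{N-D_m/R_m}{(D_m/R_m)(N-1)}$, which depends only on the common ratio. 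Conversely, if $(\widehat{\calF}_n)_n$ is an EITFF, then (c) and (d) give that each layer is an EITFF and that~\eqref{eq.D/R consistency} holds.

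For (f), I would use $\norm{\oplus_m\bfL_m}_\Fro^2=\sum_m\norm{\bfL_m}_\Fro^2$. This gives $\norm{\widehat{\bfPhi}_{n_1}^*\widehat{\bfPhi}_{n_2}^{}}_\Fro^2=\sum_m \#(\calR_m)\varLambda_m$, which is independent of the distinct pair $n_1,n_2$. Here no common constant is needed, which is why only one direction holds. Part (g) is then (c) together with (f).
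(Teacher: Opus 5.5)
Your proposal is correct and follows the paper's proof essentially step for step: (a) via the block-diagonal adjoint, (b)--(d) blockwise, (e) by noting that the Welch-bound value $\frac{NR_m-D_m}{D_m(N-1)}$ of each EI constant depends only on $D_m/R_m$, and (f)--(g) via additivity of squared Frobenius norms. The differences are minor. For the Naimark claim in (c), you match the entire reindexed block-diagonal fusion Gram matrix at once, whereas the paper matches only the off-diagonal cross-Gram blocks and obtains tightness of $(\oplus_m\check{\bfPhi}_{m,n})_{n\in\calN}$ from the first half of (c). In (e), you should set aside the trivial case $\#(\calN)=1$, as the paper does, since the Welch-bound formula is undefined there; in that case EI is vacuous and (e) reduces to (c).
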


\begin{proof}[Proof.]
(a)
For any $n_1,n_2\in\calN$,
\begin{equation*}
\widehat{\bfPhi}_{n_1}^{*}\widehat{\bfPhi}_{n_2}^{}
=\Bigparen{\,\medoplus_{m\in\calM}\bfPhi_{m,n_1}^{}}^*\Bigparen{\,\medoplus_{m\in\calM}\bfPhi_{m,n_2}^{}}
=\Bigparen{\,\medoplus_{m\in\calM}\bfPhi_{m,n_1}^{*}}\Bigparen{\,\medoplus_{m\in\calM}\bfPhi_{m,n_2}^{}}
=\medoplus_{m\in\calM}\bfPhi_{m,n_1}^{*}\bfPhi_{m,n_2}^{}.
\end{equation*}
In particular, when $n_1=n_2=n$, this becomes
$\widehat{\bfPhi}_{n}^{*}\widehat{\bfPhi}_{n}^{}
=\oplus_{m\in\calM}\brI_{\calR_m}
=\brI_{\widehat{\calR}}$.

(b)
For any $n_1,n_2\in\calN$,
\begin{equation*}
\norm{\widehat{\bfPhi}_{n_1}^{*}\widehat{\bfPhi}_{n_2}^{}}_2
=\Bignorm{\medoplus_{m\in\calM}\bfPhi_{m,n_1}^{*}\bfPhi_{m,n_2}^{}}_2
=\max_{m\in\calM}\norm{\bfPhi_{m,n_1}^{*}\bfPhi_{m,n_2}^{}}_2
\end{equation*}
and so
$\displaystyle\max_{n_1\neq n_2}\norm{\widehat{\bfPhi}_{n_1}^{*}\widehat{\bfPhi}_{n_2}^{}}_2
=\max_{n_1\neq n_2}\max_{m\in\calM}\norm{\bfPhi_{m,n_1}^{*}\bfPhi_{m,n_2}^{}}_2
=\max_{m\in\calM}\max_{n_1\neq n_2}\norm{\bfPhi_{m,n_1}^{*}\bfPhi_{m,n_2}^{}}_2$.

(c)
For any $n\in\calN$,
\begin{equation*}
\widehat{\bfPhi}_{n}^{}\widehat{\bfPhi}_{n}^{*}
=\Bigparen{\,\medoplus_{m\in\calM}\bfPhi_{m,n}^{}}\Bigparen{\,\medoplus_{m\in\calM}\bfPhi_{m,n}^{}}^*
=\Bigparen{\,\medoplus_{m\in\calM}\bfPhi_{m,n}^{}}\Bigparen{\,\medoplus_{m\in\calM}\bfPhi_{m,n}^{*}}
=\medoplus_{m\in\calM}\bfPhi_{m,n}^{}\bfPhi_{m,n}^{*}
\end{equation*}
and so \smash{$(\widehat{\calF}_n)_{n\in\calN}$} is a TFF for \smash{$\widehat{\calE}$} if and only if there exists $A>0$ such that
\begin{equation*}
A\medoplus_{m\in\calM}\brI_{\calE_m}
=A\brI_{\widehat{\calE}}
=\sum_{n\in\calN}\widehat{\bfPhi}_{n}^{}\widehat{\bfPhi}_{n}^{*}
=\sum_{n\in\calN}\medoplus_{m\in\calM}\bfPhi_{m,n}^{}\bfPhi_{m,n}^{*}
=\medoplus_{m\in\calM}\sum_{n\in\calN}\bfPhi_{m,n}^{}\bfPhi_{m,n}^{*},
\end{equation*}
namely such that
\smash{$\sum_{n\in\calN}\bfPhi_{m,n}^{}\bfPhi_{m,n}^{*}=A\brI_{\calE_m}$} for any $m\in\calM$.
That is, \smash{$(\widehat{\calF}_n)_{n\in\calN}$} is a TFF for $\widehat{\calE}$ if and only if every $(\calF_{m,n})_{n\in\calN}$ is a TFF for $\calE_m$ with a common frame constant $A>0$.
Here since the frame constant of any such TFF \smash{$(\calF_{m,n})_{n\in\calN}$} is necessarily \smash{$\frac{NR_m}{D_m}$} where $N:=\#(\calN)$, $R_m:=\#(\calR_m)$ and $D_m:=\dim(\calE_m)$,
this occurs if and only if every \smash{$(\calF_{m,n})_{n\in\calN}$} is a TFF for \smash{$\calE_m$} and~\eqref{eq.D/R consistency} holds.
Assume for the moment that this occurs.
Since \smash{$(\widehat{\calF}_n)_{n\in\calN}$} is a TFF for \smash{$\widehat{\calE}$},
we moreover have \smash{$A=\frac{N\widehat{R}}{\widehat{D}}$} where
\begin{align*}
\widehat{D}
&:=\dim(\widehat{\calE})
=\dim(\oplus_{m\in\calM}\calE_m)
=\sum_{m\in\calM}\dim(\calE_m)
=\sum_{m\in\calM}D_m,\\
\widehat{R}
&:=\#(\widehat{\calR})
=\#\bigparen{\sqcup_{m\in\calM}(\set{m}\times\calR_m)}
=\sum_{m\in\calM}\#(\calR_m)
=\sum_{m\in\calM}R_m.
\end{align*}
Thus, \smash{$\frac{D_m}{R_m}=\frac{\widehat{D}}{\widehat{R}}$} for any $m\in\calM$.
Now further assume that \smash{$\frac{\widehat{D}}{\widehat{R}}<N$}.
For each $m\in\calM$, we thus have $NR_m>D_m$,
implying \smash{$(\bfPhi_{m,n})_{n\in\calN}$} has a Naimark complement \smash{$(\check{\bfPhi}_{m,n})_{n\in\calN}$}.
Here, for each $m\in\calM$,
\begin{equation}
\label{eq.pf.lem.FF direct sums.1}
\check{\bfPhi}_{m,n_1}^{*}\check{\bfPhi}_{m,n_2}^{}
=-\tfrac{D_m}{NR_m-D_m}\bfPhi_{m,n_1}^{*}\bfPhi_{m,n_2}^{}
=-\tfrac{\widehat{D}}{N\widehat{R}-\widehat{D}}\bfPhi_{m,n_1}^{*}\bfPhi_{m,n_2}^{}
\end{equation}
for any distinct $n_1,n_2\in\calN$,
and the images of \smash{$(\check{\bfPhi}_{m,n})_{n\in\calN}$} form a TFF for some Euclidean space \smash{$\check{\calE}_m$} over $\bbF$ of dimension \smash{$NR_m-D_m$}.
Here since
\begin{equation*}
\tfrac{\dim(\check{\calE}_{m})}{\#(\calR_{m})}
=\tfrac{NR_m-D_m}{R_m}
=\tfrac{N\widehat{R}-\widehat{D}}{\widehat{R}}
\end{equation*}
for any $m\in\calM$,
these isometries satisfy~\eqref{eq.D/R consistency}.
By the previous results,
\smash{$(\widehat{\check{\bfPhi}}_{n})_{n\in\calN}$},
\smash{$\widehat{\check{\bfPhi}}_{n}:=\oplus_{m\in\calM}\check{\bfPhi}_{m,n}$}
thus defines isometries of rank $\widehat{R}$ whose images form a TFF for the space \smash{$\oplus_{m\in\calM}\check{\calE}_m$} of dimension
\begin{equation*}
\dim(\oplus_{m\in\calM}\check{\calE}_m)
=\sum_{m\in\calM}\dim(\check{\calE}_m)
=\sum_{m\in\calM}(NR_m-D_m)
=N\widehat{R}-\widehat{D},
\end{equation*}
and for which, by~\eqref{eq.pf.lem.FF direct sums.1},
\begin{equation*}
\widehat{\check{\bfPhi}}_{n_1}^{*}\widehat{\check{\bfPhi}}_{n_2}^{}
=\oplus_{m\in\calM}\,\check{\bfPhi}_{m,n_1}^{*}\check{\bfPhi}_{m,n_2}^{}
=\oplus_{m\in\calM}\,\bigparen{-\tfrac{D_m}{NR_m-D_m}\bfPhi_{m,n_1}^{*}\bfPhi_{m,n_2}^{}}
=-\tfrac{\widehat{D}}{N\widehat{R}-\widehat{D}}\widehat{\bfPhi}_{n_1}^{*}\widehat{\bfPhi}_{n_2}^{}
\end{equation*}
for any distinct $n_1,n_2\in\calN$.
Thus, $(\widehat{\check{\bfPhi}}_n)_{n\in\calN}$ is indeed a Naimark complement of $(\widehat{\bfPhi}_n)_{n\in\calN}$.

(d)
Similarly,
\smash{$(\widehat{\calF}_n)_{n\in\calN}$} is EI if and only if there exists $\varLambda\geq0$ such that
\begin{equation*}
\varLambda\brI
=\widehat{\bfPhi}_{n_1}^{*}\widehat{\bfPhi}_{n_2}^{}\widehat{\bfPhi}_{n_2}^{*}\widehat{\bfPhi}_{n_1}^{}
=\Bigparen{\medoplus_{m\in\calM}\bfPhi_{m,n_1}^{*}\bfPhi_{m,n_2}^{}}
\Bigparen{\medoplus_{m\in\calM}\bfPhi_{m,n_2}^{*}\bfPhi_{m,n_1}^{}}
=\medoplus_{m\in\calM}\bfPhi_{m,n_1}^{*}\bfPhi_{m,n_2}^{}\bfPhi_{m,n_2}^{*}\bfPhi_{m,n_1}^{}
\end{equation*}
for any distinct $n_1,n_2\in\calN$,
namely such that \smash{$\bfPhi_{m,n_1}^{*}\bfPhi_{m,n_2}^{}\bfPhi_{m,n_2}^{*}\bfPhi_{m,n_1}^{}=\varLambda\brI$} for any $m\in\calM$ and distinct $n_1,n_2\in\calN$.

(e)
Here we assume $N>1$ since the result is immediate if $N=1$.
Recall that if \smash{$(\calF_{m,n})_{n\in\calN}$} is an EITFF for $\calE_m$ then its chordal constant is
\smash{$\frac{NR_m-D_m}{D_m(N-1)}
=\frac{1}{N-1}(N\frac{R_m}{D_m}-1)$}.
As such, by (c) and (d),
\smash{$(\widehat{\calF}_n)_{n\in\calN}$} is an EITFF for \smash{$\widehat{\calE}$}
if and only if every \smash{$(\calF_{m,n})_{n\in\calN}$} is an EITFF for \smash{$\calE_m$} and~\eqref{eq.D/R consistency} holds.

(f)
For any $n_1,n_2\in\calN$, (a) gives
\begin{equation*}
\norm{\widehat{\bfPhi}_{n_1}^{*}\widehat{\bfPhi}_{n_2}^{}}_\Fro^2
=\Bignorm{\medoplus_{m\in\calM}\bfPhi_{m,n_1}^{*}\bfPhi_{m,n_2}^{}}_\Fro^2
=\sum_{m\in\calM}\norm{\bfPhi_{m,n_1}^{*}\bfPhi_{m,n_2}^{}}_\Fro^2.
\end{equation*}
As such, if every finite sequence \smash{$(\calF_{m,n})_{n\in\calN}$} is EC with chordal constant $\varLambda_m$ then
\begin{equation*}
\tfrac1{\#(\widehat{\calR})}\norm{\widehat{\bfPhi}_{n_1}^{*}\widehat{\bfPhi}_{n_2}^{}}_\Fro^2
=\tfrac1{\widehat{R}}\sum_{m\in\calM}\norm{\bfPhi_{m,n_1}^{*}\bfPhi_{m,n_2}^{}}_\Fro^2
=\tfrac1{\widehat{R}}\sum_{m\in\calM}R_m\varLambda_m
\end{equation*}
for any distinct $n_1,n_2\in\calN$ and so \smash{$(\widehat{\calF}_n)_{n\in\calN}$} is EC.

(g)
If each \smash{$(\calF_{m,n})_{n\in\calN}$} is an ECTFF for $\calE_m$ and~\eqref{eq.D/R consistency} holds then (c) and (f) imply that \smash{$(\widehat{\calF}_n)_{n\in\calN}$} is an ECTFF for \smash{$\widehat{\calE}$}.
\end{proof}

Notably, the converses of parts (f) and (g) of Lemma~\ref{lem.FF direct sums} are not true in general.
Rather, Lemma~\ref{lem.FF direct sums}(a) implies that $(\widehat{\calF}_n)_{n\in\calN}$ is EC if and only if there exists $\varLambda\geq0$ such that
\smash{$\tfrac{1}{\#(\widehat{\calR})}\sum_{m\in\calM}\norm{\bfPhi_{m,n_1}^{*}\bfPhi_{m,n_2}^{}}_\Fro^2=\varLambda$}
for any distinct $n_1,n_2\in\calN$.

In this manuscript,
we focus on ECTFFs arising via Lemma~\ref{lem.FF direct sums} in the special case where each $\calF_{m,n}$ is one-dimensional,
and leave a more general exploitation of that approach for future research.
In this case,
each $\calR_m$ is a singleton set,
and so we streamline notation by identifying both $\calM$ and $\widehat{\calR}$ with $\calR$,
each $\bbF^{\calR_m}$ with $\bbF$,
and each $\bfPhi_{m,n}$ with an isometry $\bfPhi_{r,n}:\bbF\rightarrow\calE_r$,
namely a map for which there exists a unit norm vector $\bfphi_{r,n}\in\calE_r$ such that $\bfPhi_{r,n}x=\bfphi_{r,n}x$ for any $x\in\bbF$.
Under these simplifications,
some of the concepts above reduce to the following:

\begin{definition}
\label{def.SUNTF}
Let $\calN$ and $\calR$ be finite sets of cardinality $N>0$ and $R>0$, respectively.
For each $r\in\calR$,
let $(\bfphi_{r,n})_{n\in\calN}$ be a finite sequence of unit vectors in a Euclidean space $\calE_r$ over $\bbF$.
We say \smash{$\bigparen{(\bfphi_{r,n})_{n\in\calN}}_{r\in\calR}$} is \textit{stratified} if there exists $\varLambda\geq0$ such that
\begin{equation*}
\tfrac{1}{R}\sum_{r\in\calR}\abs{\ip{\bfphi_{r,n_1}}{\bfphi_{r,n_2}}}^2
=\varLambda
\end{equation*}
for any distinct $n_1,n_2\in\calN$.
If this holds and each \textit{layer} \smash{$(\bfphi_{r,n})_{n\in\calN}$} is a UNTF for \smash{$\calE_r$} where there exists $D>0$ such that \smash{$\dim(\calE_r)=D$} for any $r\in\calR$,
we call \smash{$\bigparen{(\bfphi_{r,n})_{n\in\calN}}_{r\in\calR}$} a SUNTF for $(\calE_r)_{r\in\calR}$ or a \smash{$\SUNTF_\bbF(D,N,R)$}.
\end{definition}

\begin{example}
\label{ex.SUNTF(2,5,2)}
A $\SUNTF_\bbF(D,N,R)$ with $R=1$ equates to an $\ETF_\bbF(D,N)$.
For an example with $R>1$,
consider the fifth roots of unity regarded as a finite sequence $(\bfphi_n)_{n\in\bbF_5}$ of five unit vectors in $\bbR^2$,
indexed by the finite field of order $5$,
namely the columns of the synthesis matrix
\begin{align*}
\bfPhi
=\left[\begin{array}{ccccc}
\bfphi_{0}&\bfphi_{1}&\bfphi_{2}&\bfphi_{3}&\bfphi_{4}
\end{array}\right]
=\left[\begin{array}{rrrrr}
1&u&v&v&u\\
0&\phm U&\phm V&-V&-U
\end{array}\right],
\end{align*}
where $u:=\cos(\frac{2\pi}{5})=\tfrac14(-1+\sqrt{5})$,
$v:=\cos(\frac{4\pi}{5})=\tfrac14(-1-\sqrt{5})$,
$U:=\sqrt{1-u^2}$, $V:=\sqrt{1-v^2}$.
A direct calculation gives that the frame operator and Gram matrix of $(\bfphi_n)_{n\in\bbF_5}$ are
\begin{equation*}
\bfPhi\bfPhi^*
=\left[\begin{array}{cc}\frac{5}{2}&0\smallskip\\0&\frac{5}{2}\end{array}\right],
\quad
\bfPhi^*\bfPhi
=\left[\begin{array}{ccccc}
1&u&v&v&u\\
u&1&u&v&v\\
v&u&1&u&v\\
v&v&u&1&u\\
u&v&v&u&1
\end{array}\right],
\end{equation*}
respectively.
In particular, $(\bfphi_n)_{n\in\bbF_5}$ is a UNTF for $\bbR^2$ but is not equiangular.
In fact,
Gerzon's bound implies that no five-vector ETF for $\bbR^2$ exists,
and the coherence of $(\bfphi_n)_{n\in\bbF_5}$ is minimal~\cite{BenedettoK06,FickusJM18}.
Now let $\calR:=\set{1,2}\subseteq\bbF_5$,
and for each $r\in\calR$, define $(\bfphi_{r,n})_{n\in\bbF_5}$, $\bfphi_{r,n}:=\bfphi_{rn}$,
namely the finite sequences with corresponding synthesis matrices
\begin{align}
\label{eq.ex.SUNTF(2,5,2).1}
\bfPhi_1
&=\left[\begin{array}{ccccc}
\bfphi_{1,0}&\bfphi_{1,1}&\bfphi_{1,2}&\bfphi_{1,3}&\bfphi_{1,4}
\end{array}\right]
=\left[\begin{array}{ccccc}
\bfphi_{0}&\bfphi_{1}&\bfphi_{2}&\bfphi_{3}&\bfphi_{4}
\end{array}\right]
=\left[\begin{array}{rrrrr}
1&u&v&v&u\\
0&\phm U&\phm V&-V&-U
\end{array}\right],\\
\label{eq.ex.SUNTF(2,5,2).2}
\bfPhi_2
&=\left[\begin{array}{ccccc}
\bfphi_{2,0}&\bfphi_{2,1}&\bfphi_{2,2}&\bfphi_{2,3}&\bfphi_{2,4}
\end{array}\right]
=\left[\begin{array}{ccccc}
\bfphi_{0}&\bfphi_{2}&\bfphi_{4}&\bfphi_{1}&\bfphi_{3}
\end{array}\right]
=\left[\begin{array}{rrrrr}
1&v&u&u&v\\
0&\phm V&-U&\phm U&-V
\end{array}\right].
\end{align}
Each \smash{$(\bfphi_{r,n})_{n\in\bbF_5}$} is thus a permutation of the UNTF \smash{$(\bfphi_n)_{n\in\bbF_5}$} for $\bbR^2$,
and so is itself a UNTF for $\bbR^2$.
Adding the entrywise squared moduli of their Gram matrices reveals that these two UNTFs are moreover stratified (Definition~\ref{def.SUNTF}) with
$\varLambda=\frac12(u^2+v^2)=\frac{3}{8}$:
\begin{equation*}
\abs{\bfPhi_1^*\bfPhi_1^{}}^2+\abs{\bfPhi_2^*\bfPhi_2^{}}^2
=\left[\begin{array}{ccccc}
  1&u^2&v^2&v^2&u^2\\
u^2&  1&u^2&v^2&v^2\\
v^2&u^2&  1&u^2&v^2\\
v^2&v^2&u^2&  1&u^2\\
u^2&v^2&v^2&u^2&  1
\end{array}\right]
+\left[\begin{array}{ccccc}
  1&v^2&u^2&u^2&v^2\\
v^2&  1&v^2&u^2&u^2\\
u^2&v^2&  1&v^2&u^2\\
u^2&u^2&v^2&  1&v^2\\
v^2&u^2&u^2&v^2&  1
\end{array}\right]
=2\left[\begin{array}{ccccc}
         1&\varLambda&\varLambda&\varLambda&\varLambda\\
\varLambda&         1&\varLambda&\varLambda&\varLambda\\
\varLambda&\varLambda&         1&\varLambda&\varLambda\\
\varLambda&\varLambda&\varLambda&         1&\varLambda\\
\varLambda&\varLambda&\varLambda&\varLambda&         1
\end{array}\right].
\end{equation*}
(Ignoring diagonal entries,
this is analogous to how a flat plain can arise from a stack of uneven strata of rock,
inspiring our terminology.)
Thus, \smash{$\bigparen{(\bfphi_{r,n})_{n\in\bbF_5}}_{r\in\set{1,2}}$} is a $\SUNTF_\bbR(2,5,2)$.
\end{example}

In the following result,
we construct ECTFFs from direct sums of the layers of a SUNTF:

\begin{theorem}
\label{thm.SUNTF to ECTFF}
Let $\bbF$ be $\bbR$ or $\bbC$ and let $\calN$ and $\calR$ be nonempty finite sets.
For each $r\in\calR$,
let $(\bfphi_{r,n})_{n\in\calN}$ be a finite sequence of unit vectors in a Euclidean space $\calE_r$ over $\bbF$.
Let \smash{$\widehat{\calE}:=\oplus_{r\in\calR}\calE_r$}.
For each $n\in\calN$,
let \smash{$\widehat{\calF}_n$} be the image of \smash{$\widehat{\bfPhi}_n:\bbF^\calR\rightarrow\widehat{\calE}$},
$\smash{\widehat{\bfPhi}_n\bfx:=\oplus_{r\in\calR}\bfphi_{r,n}\bfx(r)}$.
Then:
\begin{enumerate}
\alphi

\item
Each \smash{$\widehat{\bfPhi}_n$} is linear and each \smash{$\widehat{\bfPhi}_{n_1}^{*}\widehat{\bfPhi}_{n_2}^{}$} is a diagonal matrix in \smash{$\bbF^{\calR\times\calR}$} with
\smash{$\ip{\bfphi_{r,n_1}}{\bfphi_{r,n_2}}$} as its $r$th diagonal entry.
In particular, each \smash{$\widehat{\bfPhi}_n$} is an isometry.

\item
\smash{$\max_{n_1\neq n_2}\norm{\widehat{\bfPhi}_{n_1}^{*}\widehat{\bfPhi}_{n_2}^{}}_2
=\max_{r\in\calR}\max_{n_1\neq n_2}\abs{\ip{\bfphi_{r,n_1}}{\bfphi_{r,n_2}}}$},
provided $\#(\calN)>1$.

\item
\smash{$(\widehat{\calF}_n)_{n\in\calN}$} is a TFF for \smash{$\widehat{\calE}$} if and only if each \smash{$(\bfphi_{r,n})_{n\in\calN}$} is a UNTF for \smash{$\calE_r$} where \smash{$\dim(\calE_r)$} is constant over $r\in\calR$.
When this occurs and \smash{$\dim(\calE_r)<\#(\calN)$} for any $r\in\calR$,
each \smash{$(\bfphi_{r,n})_{n\in\calN}$} has a Naimark complementary UNTF \smash{$(\check{\bfphi}_{r,n})_{n\in\calN}$} for some Euclidean space $\check{\calE}_r$,
and letting
\begin{equation*}
\widehat{\check{\bfPhi}}_n:\bbF^\calR\rightarrow\widehat{\check{\calE}}:=\oplus_{r\in\calR}\check{\calE}_r,
\quad
\widehat{\check{\bfPhi}}_n\bfx:=\oplus_{r\in\calR}\check{\bfphi}_{r,n}\bfx(r),
\end{equation*}
for each $n\in\calN$ yields a Naimark complement $(\widehat{\check{\bfPhi}}_n)_{n\in\calN}$ of
\smash{$(\widehat{\bfPhi}_n)_{n\in\calN}$}.

\item
\smash{$(\widehat{\calF}_n)_{n\in\calN}$} is EI if and only if there exists $\varLambda\geq0$ such that \smash{$\abs{\ip{\bfphi_{r,n_1}}{\bfphi_{r,n_2}}}^2=\varLambda$} for any $r\in\calR$ and any distinct \smash{$n_1,n_2\in\calN$}.

\item
\smash{$(\widehat{\calF}_n)_{n\in\calN}$} is an EITFF for \smash{$\widehat{\calE}$} if and only if each \smash{$(\bfphi_{r,n})_{n\in\calN}$} is an ETF for \smash{$\calE_r$} where \smash{$\dim(\calE_r)$} is constant over $r\in\calR$.

\item
\smash{$(\widehat{\calF}_n)_{n\in\calN}$} is EC if and only if
\smash{$\bigparen{(\bfphi_{r,n})_{n\in\calN}}_{r\in\calR}$} is stratified.

\item
\smash{$(\widehat{\calF}_n)_{n\in\calN}$} is an ECTFF for \smash{$\widehat{\calE}$} if and only if \smash{$\bigparen{(\bfphi_{r,n})_{n\in\calN}}_{r\in\calR}$} is a SUNTF for $(\calE_r)_{r\in\calR}$.
Here, \smash{$(\widehat{\calF}_n)_{n\in\calN}$} is an $\ECTFF_\bbF(DR,R,N)$ when \smash{$\bigparen{(\bfphi_{r,n})_{n\in\calN}}_{r\in\calR}$} is a $\SUNTF_\bbF(D,N,R)$.
\end{enumerate}
\end{theorem}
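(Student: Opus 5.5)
The plan is to obtain every part of Theorem~\ref{thm.SUNTF to ECTFF} as the special case of Lemma~\ref{lem.FF direct sums} in which $\calM=\calR$ and each $\calR_m$ is a singleton. For each $r\in\calR$ I will regard $\bfPhi_{r,n}:\bbF\rightarrow\calE_r$, $x\mapsto\bfphi_{r,n}x$, as an isometry; it is one because $\norm{\bfphi_{r,n}}=1$. Its image is the line $\calF_{r,n}$ spanned by $\bfphi_{r,n}$. Under the identification $\oplus_{r\in\calR}\bbF=\bbF^\calR$, the map $\widehat{\bfPhi}_n$ of the theorem is exactly $\oplus_{r\in\calR}\bfPhi_{r,n}$, which gives linearity at once.

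For (a), apply Lemma~\ref{lem.FF direct sums}(a). Since $\bfPhi_{r,n_1}^*\bfPhi_{r,n_2}^{}$ is the $1\times1$ matrix $\ip{\bfphi_{r,n_1}}{\bfphi_{r,n_2}}$, the cross-Gram matrix $\widehat{\bfPhi}_{n_1}^*\widehat{\bfPhi}_{n_2}^{}$ is diagonal with these entries. Setting $n_1=n_2$ gives $\brI$, so each $\widehat{\bfPhi}_n$ is an isometry. Part (b) is Lemma~\ref{lem.FF direct sums}(b), using that the operator norm of a $1\times1$ matrix is the modulus of its entry.

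For (c), apply Lemma~\ref{lem.FF direct sums}(c) with $\#(\calR_m)=1$. Condition~\eqref{eq.D/R consistency} then says $\dim(\calE_r)$ is constant in $r$. A one-dimensional fusion frame $(\calF_{r,n})_{n\in\calN}$ is a TFF exactly when $(\bfphi_{r,n})_{n\in\calN}$ is a UNTF, since the projections are $\bfphi_{r,n}^{}\bfphi_{r,n}^*$. The Naimark statement transfers verbatim. The only thing to check is that the Naimark complement of the one-dimensional isometries $(\bfPhi_{r,n})_{n}$ is given by the Naimark complementary UNTF $(\check{\bfphi}_{r,n})_n$. This holds because the Preliminaries define the fusion-frame Naimark complement through the ordinary Naimark complement of the concatenated UNTF, which here is just $(\bfphi_{r,n})_n$. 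The hypothesis $\dim(\calE_r)<\#(\calN)$ is exactly $NR_m>D_m$.

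For (d), apply Lemma~\ref{lem.FF direct sums}(d). The $1\times1$ product $\bfPhi_{r,n_1}^*\bfPhi_{r,n_2}^{}\bfPhi_{r,n_2}^*\bfPhi_{r,n_1}^{}$ equals $\abs{\ip{\bfphi_{r,n_1}}{\bfphi_{r,n_2}}}^2$, so EI means this quantity equals a common $\varLambda$. I could equally verify this directly: by (a), $\widehat{\bfPhi}_{n_1}^*\widehat{\bfPhi}_{n_2}^{}\widehat{\bfPhi}_{n_2}^*\widehat{\bfPhi}_{n_1}^{}$ is diagonal with these entries.

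For (e), combine (c) and (d), or cite Lemma~\ref{lem.FF direct sums}(e) directly. The one-dimensional EITFFs there are precisely ETFs, because a UNTF whose inner products have constant modulus is an ETF. The only care point is that the common constant $\varLambda$ in (d) matches automatically. When $N>1$, each ETF's constant is $\frac{N-D_r}{D_r(N-1)}$, and this is determined once the $D_r$ are equal. When $N=1$ the claim is trivial.

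For (f), Lemma~\ref{lem.FF direct sums}(f) gives only one direction, so I will argue directly from (a):
\[
\tfrac1R\norm{\widehat{\bfPhi}_{n_1}^*\widehat{\bfPhi}_{n_2}^{}}_\Fro^2=\tfrac1R\sum_{r\in\calR}\abs{\ip{\bfphi_{r,n_1}}{\bfphi_{r,n_2}}}^2 .
\]
This is constant over distinct $n_1,n_2$ exactly when the layers are stratified in the sense of Definition~\ref{def.SUNTF}. Part (g) then follows by combining (c) and (f) and comparing with Definition~\ref{def.SUNTF}. The parameters follow from $\dim\widehat{\calE}=\sum_r\dim\calE_r=DR$ and $\#(\calR)=R$, so we obtain an $\ECTFF_\bbF(DR,R,N)$. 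I expect no real obstacle here; the most delicate bookkeeping is the Naimark-complement identification in (c) and the matching of constants in (e).
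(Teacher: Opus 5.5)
Your proposal is correct and follows the paper's proof essentially verbatim. The paper likewise writes $\widehat{\bfPhi}_n=\oplus_{r\in\calR}\bfPhi_{r,n}$ with one-dimensional isometries $\bfPhi_{r,n}x=\bfphi_{r,n}x$ and deduces (a)--(e) from Lemma~\ref{lem.FF direct sums}(a)--(e). It proves (f) directly from the Frobenius-norm identity given by (a), since Lemma~\ref{lem.FF direct sums}(f) only gives one direction, and obtains (g) from (c) and (f). The paper leaves the details of (a)--(e) to the reader; your identification of the Naimark complement in (c) and your matching of the EI constants in (e) supply them correctly.
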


\begin{proof}[Proof.]
For each $r\in\calR$ and $n\in\calN$,
define
\smash{$\bfPhi_{r,n}:\bbF\rightarrow\calE_r$},
\smash{$\bfPhi_{r,n}x:=\bfphi_{r,n}x$} and let \smash{$\calF_{r,n}$} be its one-dimensional image.
Then \smash{$\widehat{\bfPhi}_n=\oplus_{r\in\calR}\bfPhi_{r,n}$} provided \smash{$\bbF^\calR$} is identified with \smash{$\oplus_{r\in\calR}\bbF$}.
For any $r\in\calR$ and $n_1,n_2\in\calN$, the map \smash{$\bfPhi_{r,n_1}^{*}\bfPhi_{r,n_2}^{}:\bbF\rightarrow\bbF$} is represented by the $1\times 1$ matrix with entry \smash{$\ip{\bfphi_{r,n_1}}{\bfphi_{r,n_2}}$}.
Claims (a)--(e) then follow from Lemma~\ref{lem.FF direct sums}(a)--(e), respectively.
We leave those details to the interested reader and focus our remaining attention on (f) and (g).
For (f), it suffices to let $R:=\#(\calR)$,
recall the definitions of EC and stratified (Definition~\ref{def.SUNTF}),
and note that (a) gives
\begin{equation}
\label{eq.pf.thm.SUNTF to ECTFF.1}
\tfrac{1}{R}\norm{\widehat{\bfPhi}_{n_1}^{*}\widehat{\bfPhi}_{n_2}^{}}_\Fro^2
=\tfrac{1}{R}\sum_{r_1\in\calR}\sum_{r_2\in\calR}\abs{(\widehat{\bfPhi}_{n_1}^{*}\widehat{\bfPhi}_{n_2}^{})(r_1,r_2)}^2
=\tfrac{1}{R}\sum_{r\in\calR}\abs{\ip{\bfphi_{r,n_1}}{\bfphi_{r,n_2}}}^2
\end{equation}
for any $n_1,n_2\in\calN$.
Combining (c) and (f) then gives (g).
\end{proof}

We remark that if \smash{$\bigparen{(\bfphi_{r,n})_{n\in\calN}}_{r\in\calR}$} is stratified then~\eqref{eq.pf.thm.SUNTF to ECTFF.1} gives that the constant $\varLambda$ of Definition~\ref{def.SUNTF} is the chordal constant of the resulting EC subspaces.
For such \smash{$\bigparen{(\bfphi_{r,n})_{n\in\calN}}_{r\in\calR}$},
we thus refer to this number $\varLambda$ as the chordal constant of \smash{$\bigparen{(\bfphi_{r,n})_{n\in\calN}}_{r\in\calR}$}.

\begin{example}
\label{ex.ECTFF(4,2,5)}
The $\SUNTF_\bbR(2,5,2)$ of Example~\ref{ex.SUNTF(2,5,2)} yields an $\ECTFF_\bbR(4,2,5)$ via Theorem~\ref{thm.SUNTF to ECTFF},
that is, an optimal packing of five planes in $\bbR^4$ with respect to the chordal distance.
For each $n\in\bbF_5$, the corresponding rank-$2$ isometry \smash{$\widehat{\bfPhi}_n$} is the block diagonal matrix with $\bfphi_{1,n}$ and $\bfphi_{2,n}$ as its diagonal blocks:
\begin{align}
\nonumber
\widehat{\bfPhi}
&=\left[\begin{array}{ccccc}
\widehat{\bfPhi}_0&\widehat{\bfPhi}_1&\widehat{\bfPhi}_2&\widehat{\bfPhi}_3&\widehat{\bfPhi}_4
\end{array}\right]\\
\nonumber
&=\left[\begin{array}{cc|cc|cc|cc|cc}
\bfphi_{1,0}&\bfzero&\bfphi_{1,1}&\bfzero&\bfphi_{1,2}&\bfzero&\bfphi_{1,3}&\bfzero&\bfphi_{1,4}&\bfzero\\
\bfzero&\bfphi_{2,0}&\bfzero&\bfphi_{2,1}&\bfzero&\bfphi_{2,2}&\bfzero&\bfphi_{2,3}&\bfzero&\bfphi_{2,4}
\end{array}\right]\\
\label{eq.ex.ECTFF(4,2,5).1}
&=\left[\begin{array}{rr|rr|rr|rr|rr}
\phm 1&     0\phm&     u&     0\phm&     v& 0\phm& v&     0\phm& u& 0\phm\\
     0&     0\phm&\phm U&     0\phm&\phm V& 0\phm&-V&     0\phm&-U& 0\phm\\
     0&\phm 1\phm&     0&     v\phm&     0& u\phm& 0&     u\phm& 0& v\phm\\
     0&     0\phm&     0&\phm V\phm&     0&-U\phm& 0&\phm U\phm& 0&-V\phm
\end{array}\right].
\end{align}
Every block of the associated fusion Gram matrix is thus diagonal:
\begin{equation*}
\widehat{\bfPhi}^*\widehat{\bfPhi}
=\left[\begin{array}{ccccc}
\widehat{\bfPhi}_{0}^{*}\widehat{\bfPhi}_{0}^{}&\widehat{\bfPhi}_{0}^{*}\widehat{\bfPhi}_{1}^{}&\widehat{\bfPhi}_{0}^{*}\widehat{\bfPhi}_{2}^{}&\widehat{\bfPhi}_{0}^{*}\widehat{\bfPhi}_{3}^{}&\widehat{\bfPhi}_{0}^{*}\widehat{\bfPhi}_{4}^{}\\
\widehat{\bfPhi}_{1}^{*}\widehat{\bfPhi}_{0}^{}&\widehat{\bfPhi}_{1}^{*}\widehat{\bfPhi}_{1}^{}&\widehat{\bfPhi}_{1}^{*}\widehat{\bfPhi}_{2}^{}&\widehat{\bfPhi}_{1}^{*}\widehat{\bfPhi}_{3}^{}&\widehat{\bfPhi}_{1}^{*}\widehat{\bfPhi}_{4}^{}\\
\widehat{\bfPhi}_{2}^{*}\widehat{\bfPhi}_{0}^{}&\widehat{\bfPhi}_{2}^{*}\widehat{\bfPhi}_{1}^{}&\widehat{\bfPhi}_{2}^{*}\widehat{\bfPhi}_{2}^{}&\widehat{\bfPhi}_{2}^{*}\widehat{\bfPhi}_{3}^{}&\widehat{\bfPhi}_{2}^{*}\widehat{\bfPhi}_{4}^{}\\
\widehat{\bfPhi}_{3}^{*}\widehat{\bfPhi}_{0}^{}&\widehat{\bfPhi}_{3}^{*}\widehat{\bfPhi}_{1}^{}&\widehat{\bfPhi}_{3}^{*}\widehat{\bfPhi}_{2}^{}&\widehat{\bfPhi}_{3}^{*}\widehat{\bfPhi}_{3}^{}&\widehat{\bfPhi}_{3}^{*}\widehat{\bfPhi}_{4}^{}\\
\widehat{\bfPhi}_{4}^{*}\widehat{\bfPhi}_{0}^{}&\widehat{\bfPhi}_{4}^{*}\widehat{\bfPhi}_{1}^{}&\widehat{\bfPhi}_{4}^{*}\widehat{\bfPhi}_{2}^{}&\widehat{\bfPhi}_{4}^{*}\widehat{\bfPhi}_{3}^{}&\widehat{\bfPhi}_{4}^{*}\widehat{\bfPhi}_{4}^{}
\end{array}\right]
=\left[\begin{array}{cc|cc|cc|cc|cc}
1&0&u&0&v&0&v&0&u&0\\
0&1&0&v&0&u&0&u&0&v\\
\hline
u&0&1&0&u&0&v&0&v&0\\
0&v&0&1&0&v&0&u&0&u\\
\hline
v&0&u&0&1&0&u&0&v&0\\
0&u&0&v&0&1&0&v&0&u\\
\hline
v&0&v&0&u&0&1&0&u&0\\
0&u&0&u&0&v&0&1&0&v\\
\hline
u&0&v&0&v&0&u&0&1&0\\
0&v&0&u&0&u&0&v&0&1
\end{array}\right].
\end{equation*}
Moreover, for any $n_1,n_2\in\bbF_5$,
the two diagonal entries of
\smash{$\widehat{\bfPhi}_{n_1}^{*}\widehat{\bfPhi}_{n_2}^{}$} are
\smash{$\ip{\bfphi_{1,n_1}}{\bfphi_{1,n_2}}$} and
\smash{$\ip{\bfphi_{2,n_1}}{\bfphi_{2,n_2}}$},
implying
\smash{$\norm{\widehat{\bfPhi}_{n_1}^{*}\widehat{\bfPhi}_{n_2}^{}}_\Fro^2
=\abs{\ip{\bfphi_{1,n_1}}{\bfphi_{1,n_2}}}^2+\abs{\ip{\bfphi_{2,n_1}}{\bfphi_{2,n_2}}}^2$}.
That is, the fusion Gram matrix \smash{$\widehat{\bfPhi}^{*}\widehat{\bfPhi}^{}$} is obtained by perfectly shuffling the $2\times 2$ block diagonal array that has $\bfPhi_1^*\bfPhi_1^{}$ and $\bfPhi_2^*\bfPhi_2^{}$ as its $5\times5$ diagonal blocks into a $5\times 5$ array of $2\times 2$ diagonal blocks.
In particular, the fact that \smash{$\bigparen{(\bfphi_{r,n})_{n\in\bbF_5}}_{r\in\set{1,2}}$} is stratified implies that the $2$-dimensional images of these isometries are EC.
They moreover form a TFF for $\bbR^4$ since the associated fusion frame operator is block diagonal with $\bfPhi_{r}^{}\bfPhi_{r}^{*}=\frac{5}{2}\brI$ as its $r$th diagonal block.
\end{example}

In Section~4,
we explain how the $\SUNTF_\bbR(2,5,2)$ of Example~\ref{ex.SUNTF(2,5,2)} is equivalent to one whose layers are harmonic,
how the latter equates to a difference family for a finite abelian group,
and how the ECTFF that arises from a harmonic SUNTF via Theorem~\ref{thm.SUNTF to ECTFF} is equivalent to one that arises from the corresponding difference family via an extension~\cite{FickusMW25} of a method~\cite{King16} for constructing ECTFFs from \textit{semiregular divisible difference sets}.
For context, a (complex, equi-isoclinic) $\EITFF_\bbC(4,2,5)$ arises from a (quaternionic) $\ETF_\bbH(2,5)$~\cite{CohnKM16,EtTaoui20,Waldron25} and from the block-circulant approach of~\cite{FickusIJM23}.
That said, no $\EITFF_\bbC(4,2,5)$ arises via Theorem~\ref{thm.SUNTF to ECTFF},
as that would require a $\SUNTF_\bbC(2,5,2)$ each of whose layers is an $\ETF_\bbC(2,5)$,
which is precluded by the Gerzon bound.
Notably, Theorem~\ref{thm.SUNTF to ECTFF} only yields isometries \smash{$(\widehat{\bfPhi}_n)_{n\in\calN}$} whose cross-Gram matrices are diagonal and thus commute.
Some previously known methods for constructing TFFs also yield isometries whose cross-Gram matrices commute,
including those based on filter banks~\cite{ChebiraFM11} and harmonic frames~\cite{King16,FickusMW25}.
In the following result,
we show that complex isometries whose cross-Gram matrices commute are equivalent to ones that arise from Theorem~\ref{thm.SUNTF to ECTFF} via a constant sequence of right-unitaries.

\begin{theorem}
\label{thm.commuting}
Let $\calN$ and $\widetilde{\calR}$ be nonempty finite sets,
let \smash{$\widetilde{\calE}$} be a complex Euclidean space of dimension \smash{$\widetilde{D}$} and,
for each $n\in\calN$, let \smash{$\widetilde{\bfPhi}_n:\bbC^{\widetilde{\calR}}\rightarrow\widetilde{\calE}$} be an isometry.
Then
\begin{equation}
\label{eq.cross-Grams commute}
\widetilde{\bfPhi}_{n_1}^*\widetilde{\bfPhi}_{n_2}^{}\widetilde{\bfPhi}_{n_3}^*\widetilde{\bfPhi}_{n_4}^{}
=\widetilde{\bfPhi}_{n_3}^*\widetilde{\bfPhi}_{n_4}^{}\widetilde{\bfPhi}_{n_1}^*\widetilde{\bfPhi}_{n_2}^{}
\end{equation}
for any $n_1,n_2,n_3,n_4\in\calN$ if and only if there exist unitaries \smash{$\bfU:\widehat{\calE}\rightarrow\widetilde{\calE}$} and \smash{$\bfV\in\bbC^{\widetilde{\calR}\times\calR}$} and isometries \smash{$(\widehat{\bfPhi}_n)_{n\in\calN}$} arising from \smash{$\bigparen{(\bfphi_{r,n})_{n\in\calN}}_{r\in\calR}$} via Theorem~\ref{thm.SUNTF to ECTFF} such that $\widetilde{\bfPhi}_n=\bfU\widehat{\bfPhi}_n\bfV^*$ for any $n\in\calN$.
Moreover,
when this occurs, letting $N:=\#(\calN)$, $R:=\#(\calR)$, \smash{$\widetilde{D}:=\dim(\widetilde{\calE})$} and $\widetilde{\calF}_n:=\im(\widetilde{\bfPhi}_n)$ for each $n\in\calN$, the following hold:

\begin{enumerate}
\alphi

\item
If $N>1$,
the spectral coherence of \smash{$(\widetilde{\bfPhi}_n)_{n\in\calN}$} is at least the minimum coherence of $N$ unit vectors in $\bbC^{\lfloor\frac{\widetilde{D}}{R}\rfloor}$,
achieved if each \smash{$(\bfphi_{r,n})_{n\in\calN}$} has that coherence.
\smallskip

\item
\smash{$(\widetilde{\calF}_n)_{n\in\calN}$} is a TFF or EITFF for \smash{$\widetilde{\calE}$} if and only if each \smash{$(\bfphi_{r,n})_{n\in\calN}$} is a $\UNTF_\bbC(D,N)$ or $\ETF_\bbC(D,N)$, respectively,
and is an ECTFF for \smash{$\widetilde{\calE}$} if and only if \smash{$\bigparen{(\bfphi_{r,n})_{n\in\calN}}_{r\in\calR}$} is a $\SUNTF_\bbC(D,N,R)$,
where $R$ necessarily divides \smash{$\widetilde{D}$} and \smash{$D:=\frac{\widetilde{D}}{R}$}.

\end{enumerate}
\end{theorem}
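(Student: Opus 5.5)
The easy direction comes first. If $\widetilde{\bfPhi}_n=\bfU\widehat{\bfPhi}_n\bfV^*$, then every cross-Gram matrix has the form $\widetilde{\bfPhi}_{n_1}^*\widetilde{\bfPhi}_{n_2}^{}=\bfV\widehat{\bfPhi}_{n_1}^*\widehat{\bfPhi}_{n_2}^{}\bfV^*$. By Theorem~\ref{thm.SUNTF to ECTFF}(a), each $\widehat{\bfPhi}_{n_1}^*\widehat{\bfPhi}_{n_2}^{}$ is diagonal. These cross-Gram matrices are therefore all diagonalized by one common unitary, so they commute, which is \eqref{eq.cross-Grams commute}.

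For the converse, I would use the spectral theorem for commuting families. The family $\set{\bfG_{n_1,n_2}:=\widetilde{\bfPhi}_{n_1}^*\widetilde{\bfPhi}_{n_2}^{}}$ is closed under adjoints, since $\bfG_{n_1,n_2}^*=\bfG_{n_2,n_1}$, and by \eqref{eq.cross-Grams commute} it is commuting. It is therefore a commuting family of normal matrices in $\bbC^{\widetilde{\calR}\times\widetilde{\calR}}$, and so there is a unitary $\bfV\in\bbC^{\widetilde{\calR}\times\calR}$, with $\calR:=[\#\widetilde{\calR}]$, such that each $\bfV^*\bfG_{n_1,n_2}\bfV$ is diagonal. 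This is the one place where $\bbC$ is needed.

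Next I would define $\bfPsi_n:=\widetilde{\bfPhi}_n\bfV$. These are isometries with diagonal cross-Grams. Write $\bfpsi_{r,n}:=\bfPsi_n\bfdelta_r$. The diagonality says $\ip{\bfpsi_{r_1,n_1}}{\bfpsi_{r_2,n_2}}=0$ whenever $r_1\neq r_2$, for all $n_1,n_2$. Now set $\calE_r:=\Span(\bfpsi_{r,n})_{n\in\calN}$. These subspaces are mutually orthogonal. I would let $\bfphi_{r,n}:=\bfpsi_{r,n}\in\calE_r$, put $\widehat{\calE}:=\oplus_r\calE_r$, and let $\bfU:\widehat{\calE}\to\widetilde{\calE}$ be the sum of the inclusions, $\oplus_r\bfy_r\mapsto\sum_r\bfy_r$.

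One difficulty arises here: $\bfU$ is an isometry, but it is unitary only when $\oplus_r\calE_r=\widetilde{\calE}$. When it is not, I would enlarge one or more of the $\calE_r$ by the orthogonal complement of $\sum_r\calE_r$; this keeps all the $\bfphi_{r,n}$ unit norm. With that done, $\bfU\widehat{\bfPhi}_n=\bfPsi_n$, which gives the desired factorization. Equivalently, one could compare Gram matrices and invoke Lemma~\ref{lem.equal Gram} or Lemma~\ref{lem.equivalence}.

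For the remaining claims, I would use that equivalence preserves coherence, tightness, EI and EC, so it suffices to work with $(\widehat{\bfPhi}_n)$ and apply Theorem~\ref{thm.SUNTF to ECTFF}.

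For (b): by Theorem~\ref{thm.SUNTF to ECTFF}(c), tightness forces every layer to be a UNTF for $\calE_r$ with $\dim\calE_r$ constant, say equal to $D$. Then $\widetilde D=\sum_r\dim\calE_r=RD$, so $R$ divides $\widetilde D$. The EITFF and ECTFF statements then follow from parts (e) and (g) of Theorem~\ref{thm.SUNTF to ECTFF}. The dimension accounting here uses the enlarged $\calE_r$, but in the tight case each layer already spans $\calE_r$, so this is consistent.

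For (a): by Theorem~\ref{thm.SUNTF to ECTFF}(b), the spectral coherence equals $\max_r$ of the coherence of layer $r$. Since $\sum_r\dim\calE_r=\widetilde D$, some $r$ has $\dim\calE_r\le\lfloor\widetilde D/R\rfloor$. That layer consists of $N$ unit vectors embeddable in $\bbC^{\lfloor\widetilde D/R\rfloor}$, so its coherence is at least the minimum coherence of $N$ unit vectors in that space. Achievability is immediate: take every layer to be an optimal configuration in $\bbC^{\lfloor\widetilde D/R\rfloor}$ and pad the spaces so that the dimensions sum to $\widetilde D$.

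The main obstacle is the bookkeeping in making $\bfU$ genuinely unitary when $\sum_r\calE_r\neq\widetilde{\calE}$ while keeping each $\calE_r$ an honest ambient space for its layer. The simultaneous diagonalization step itself is standard.
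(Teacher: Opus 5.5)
Your proof is correct. The easy direction, the simultaneous-diagonalization step and parts (a) and (b) match the paper. Your pigeonhole choice of one layer with $\dim\calE_r\leq\lfloor\widetilde{D}/R\rfloor$ is the same observation as the paper's $\max_r\mu_\bbC(D_r,N)=\mu_\bbC(\min_r D_r,N)\geq\mu_\bbC(\lfloor\widetilde{D}/R\rfloor,N)$.

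The genuine difference is how you produce the layers and $\bfU$ after diagonalizing.

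\textbf{The paper's route.} It works only with Gram data. It conjugates the fusion Gram matrix by copies of $\bfV$ and by a perfect shuffle, obtaining a block-diagonal PSD matrix. It factors each diagonal block $\bfG_r$ as the Gram matrix of some $(\bfphi_{r,n})_{n\in\calN}$ in abstractly chosen spaces $\calE_r$ whose dimensions are chosen to sum to $\widetilde{D}$. It then gets $\bfU$ nonconstructively from Lemma~\ref{lem.equivalence}, which internally extends a unitary between spans to a unitary between the full spaces.

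\textbf{Your route.} You read the layer vectors off directly as the columns of $\widetilde{\bfPhi}_n\bfV$. You observe that diagonal cross-Grams say exactly that columns with different $r$ are orthogonal. This realizes $\widetilde{\calE}$ as an internal orthogonal direct sum of the $\calE_r$, after absorbing $(\sum_r\calE_r)^\perp$ into some of them, and $\bfU$ is literally the sum of inclusions.

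\textbf{What each buys.} Your argument is more elementary and fully explicit. It bypasses Gram factorization and Lemmas~\ref{lem.equal Gram} and~\ref{lem.equivalence} entirely, and it makes the ``bookkeeping'' you flagged as the main obstacle essentially trivial. The paper's argument stays within its general philosophy that everything is determined, up to equivalence, by the fusion Gram matrix. It also exhibits that matrix as a perfectly shuffled direct sum of the layer Gram matrices, the picture reused in Example~\ref{ex.ECTFF(4,2,5)}.
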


\begin{proof}[Proof.]
($\Leftarrow$)
For any $n_1,n_2,n_3,n_4\in\calN$,
\begin{equation*}
\widetilde{\bfPhi}_{n_1}^*\widetilde{\bfPhi}_{n_2}^{}\widetilde{\bfPhi}_{n_3}^*\widetilde{\bfPhi}_{n_4}^{}
=\bfV\widehat{\bfPhi}_{n_1}^{*}\bfU^*\bfU\widehat{\bfPhi}_{n_2}^{}\bfV^*
\bfV\widehat{\bfPhi}_{n_3}^{*}\bfU^*\bfU\widehat{\bfPhi}_{n_4}^{}\bfV^*
=\bfV\widehat{\bfPhi}_{n_1}^{*}\widehat{\bfPhi}_{n_2}^{}\widehat{\bfPhi}_{n_3}^{*}\widehat{\bfPhi}_{n_4}^{}\bfV^*.
\end{equation*}
By Theorem~\ref{thm.SUNTF to ECTFF}(a),
$\widehat{\bfPhi}_{n_1}^{*}\widehat{\bfPhi}_{n_2}^{}$
and $\widehat{\bfPhi}_{n_3}^{*}\widehat{\bfPhi}_{n_4}^{}$ are diagonal complex matrices and thus commute, implying
\begin{equation*}
\widetilde{\bfPhi}_{n_1}^*\widetilde{\bfPhi}_{n_2}^{}\widetilde{\bfPhi}_{n_3}^*\widetilde{\bfPhi}_{n_4}^{}
=\bfV\widehat{\bfPhi}_{n_1}^{*}\widehat{\bfPhi}_{n_2}^{}\widehat{\bfPhi}_{n_3}^{*}\widehat{\bfPhi}_{n_4}^{}\bfV^*
=\bfV\widehat{\bfPhi}_{n_3}^{*}\widehat{\bfPhi}_{n_4}^{}\widehat{\bfPhi}_{n_1}^{*}\widehat{\bfPhi}_{n_2}^{}\bfV^*
=\widetilde{\bfPhi}_{n_3}^*\widetilde{\bfPhi}_{n_4}^{}\widetilde{\bfPhi}_{n_1}^*\widetilde{\bfPhi}_{n_2}^{}.
\end{equation*}
($\Rightarrow$)
For any $n_1,n_2\in\calN$,
taking $n_3=n_2$ and $n_4=n_1$ in~\eqref{eq.cross-Grams commute} gives that
\smash{$\widetilde{\bfPhi}_{n_1}^*\widetilde{\bfPhi}_{n_2}^{}$} commutes with its adjoint and so is normal.
Thus, \smash{$\set{\widetilde{\bfPhi}_{n_1}^*\widetilde{\bfPhi}_{n_2}^{} : n_1,n_2\in\calN}$} consists of commuting normal operators on \smash{$\bbC^{\widetilde{\calR}}$},
implying there exists an ONB $(\bfv_r)_{r\in\calR}$ for \smash{$\bbC^{\widetilde{\calR}}$} such that each $\bfv_r$ is an eigenvector for each \smash{$\widetilde{\bfPhi}_{n_1}^*\widetilde{\bfPhi}_{n_2}^{}$}.
(Here, $\calR$ can be any set whose cardinality equals that of \smash{$\widetilde{\calR}$}.
As detailed later on, this flexibility is convenient when, for example, each \smash{$\widetilde{\bfPhi}_{n_1}^*\widetilde{\bfPhi}_{n_2}^{}$} is circulant over a finite abelian group $\widetilde{\calR}$, as it lets us choose $\calR$ to be its Pontryagin dual.)
The synthesis matrix \smash{$\bfV\in\bbC^{\widetilde{\calR}\times\calR}$} of this ONB is thus unitary and,
for any $n_1,n_2\in\calN$,
\smash{$\bfLambda_{n_1,n_2}
:=\bfV^*\widetilde{\bfPhi}_{n_1}^*\widetilde{\bfPhi}_{n_2}^{}\bfV\in\bbC^{\calR\times\calR}$} is diagonal.
Now let \smash{$\widehat{\bfG}:=\widehat{\bfV}^*\widetilde{\bfPhi}^*\widetilde{\bfPhi}\widehat{\bfV}$}
where $\widehat{\bfV}$ is a block diagonal matrix that has $\bfV$ as each of its diagonal blocks and
\smash{$\widetilde{\bfPhi}$} is the fusion synthesis map of \smash{$(\widetilde{\bfPhi}_n)_{n\in\calN}$}.
Thus, \smash{$\widehat{\bfG}$} is PSD and is a block matrix,
having $\bfLambda_{n_1,n_2}$ as its $(n_1,n_2)$th block where $\bfLambda_{n,n}=\brI$ for any $n\in\calN$.

Conjugating \smash{$\widehat{\bfG}$} by the (perfect shuffle) permutation matrix $\bfP:\bbC^{\calN\times\calR}\rightarrow\bbC^{\calR\times\calN}$ defined by having $\bfP\bfdelta_{n,r}:=\bfdelta_{r,n}$ thus gives a block-diagonal PSD matrix \smash{$\bfG:=\bfP\widehat{\bfG}\bfP^*\in\bbC^{(\calR\times\calN)\times(\calR\times\calN)}$} whose $r$th diagonal block is the matrix \smash{$\bfG_r\in\bbC^{\calN\times\calN}$} defined by \smash{$\bfG_r(n_1,n_2):=\bfLambda_{n_1,n_2}(r,r)$}.
Since $\bfG$ is block-diagonal and PSD,
each of its $R$ diagonal blocks $\bfG_r$ is PSD,
and is thus the Gram matrix $\bfPhi_{r}^{*}\bfPhi_{r}^{}$ of some finite sequence $(\bfphi_{r,n})_{n\in\calN}$ of vectors in any given complex Euclidean space $\calE_r$ of dimension at least $\rank(\bfG_r)$.
Here,
\begin{equation*}
\sum_{r\in\calR}\rank(\bfG_r)
=\rank(\bfG)
=\rank(\widehat{\bfG})
=\rank(\widetilde{\bfPhi}^*\widetilde{\bfPhi})
=\rank(\widetilde{\bfPhi})
=\dim\bigparen{\Span(\widetilde{\calF}_n)_{n\in\calN}}
\leq\dim(\widetilde{\calE}),
\end{equation*}
permitting us to without loss of generality choose such \smash{$(\calE_r)_{r\in\calR}$} so that \smash{$\sum_{r\in\calR}\dim(\calE_r)=\dim(\widetilde{\calE})$},
and so $\widehat{\calE}:=\oplus_{r\in\calR}\calE_r$ has $\dim(\widehat{\calE})=\dim(\widetilde{\calE})$.
For any $r\in\calR$ and $n_1,n_2\in\calN$,
\begin{equation*}
\ip{\bfphi_{r,n_1}}{\bfphi_{r,n_2}}
=(\bfPhi_{r}^{*}\bfPhi_{r}^{})(n_1,n_2)
=\bfG_r(n_1,n_2)
=\bfLambda_{n_1,n_2}(r,r).
\end{equation*}
Each $\bfphi_{r,n}$ is thus unit norm.
Define \smash{$(\widehat{\bfPhi}_n)_{n\in\calN}$} from
\smash{$\bigparen{(\bfphi_{r,n})_{n\in\calN}}_{r\in\calR}$} \`{a} la Theorem~\ref{thm.SUNTF to ECTFF}.
By Theorem~\ref{thm.SUNTF to ECTFF}(a),
each \smash{$\widehat{\bfPhi}_n$} is an isometry and each
\smash{$\widehat{\bfPhi}_{n_1}^{*}\widehat{\bfPhi}_{n_2}^{}$} is diagonal with
$(\widehat{\bfPhi}_{n_1}^{*}\widehat{\bfPhi}_{n_2}^{})(r,r)
=\ip{\bfphi_{r,n_1}}{\bfphi_{r,n_2}}
=\bfLambda_{n_1,n_2}(r,r)$
for any $r\in\calR$.
Since each \smash{$\bfLambda_{n_1,n_2}$} is diagonal,
this implies
$\widehat{\bfPhi}_{n_1}^{*}\widehat{\bfPhi}_{n_2}^{}
=\bfLambda_{n_1,n_2}
=\bfV^*\widetilde{\bfPhi}_{n_1}^*\widetilde{\bfPhi}_{n_2}^{}\bfV$
for any $n_1,n_2\in\calN$.
By Lemma~\ref{lem.equivalence}, there exists a unitary
\smash{$\bfU:\widehat{\calE}\rightarrow\widetilde{\calE}$}
such that $\widetilde{\bfPhi}_n=\bfU\widehat{\bfPhi}_n\bfV^*$ for any $n\in\calN$.

(a) Since $\widetilde{\bfPhi}_n=\bfU\widehat{\bfPhi}_n\bfV^*$ for any $n\in\calN$,
\begin{equation*}
\norm{\widetilde{\bfPhi}_{n_1}^{*}\widetilde{\bfPhi}_{n_2}^{}}_2
=\norm{\bfV\widehat{\bfPhi}_{n_1}^{*}\bfU^*\bfU\widehat{\bfPhi}_{n_2}^{}\bfV^*}_2
=\norm{\widehat{\bfPhi}_{n_1}^{*}\widehat{\bfPhi}_{n_2}^{}}_2
\end{equation*}
for any $n_1,n_2\in\calN$.
Provided $N>1$,
this implies via Theorem~\ref{thm.SUNTF to ECTFF}(b) that
\begin{equation}
\label{eq.pf.cor.coherence of commuting.1}
\max_{n_1\neq n_2}\norm{\widetilde{\bfPhi}_{n_1}^{*}\widetilde{\bfPhi}_{n_2}^{}}_2
=\max_{n_1\neq n_2}\norm{\widehat{\bfPhi}_{n_1}^{*}\widehat{\bfPhi}_{n_2}^{}}_2
=\max_{r\in\calR}\max_{n_1\neq n_2}\abs{\ip{\bfphi_{r,n_1}}{\bfphi_{r,n_2}}}.
\end{equation}
Recall each $(\bfphi_{r,n})_{n\in\calN}$ is a finite sequence of $N$ unit norm vectors in some complex Euclidean space $\calE_r$.
Letting $D_r:=\dim(\calE_r)$ for each $r\in\calR$,
\begin{equation}
\label{eq.pf.cor.coherence of commuting.2}
\widetilde{D}
=\dim(\widetilde{\calE})
=\dim(\widehat{\calE})
=\dim(\oplus_{r\in\calR}\calE_r)
=\sum_{r\in\calR}D_r
\geq R\min_{r\in\calR}D_r
\end{equation}
and so $\lfloor\frac{\widetilde{D}}{R}\rfloor\geq\min_{r\in\calR}D_r$.
In general, if $\bbF$ is $\bbR$ or $\bbC$ and $D$ and $N$ are positive integers with $N>1$,
the coherence \smash{$\max_{n_1\neq n_2}\abs{\ip{\bfphi_{n_1}}{\bfphi_{n_2}}}$} of $N$ unit vectors in \smash{$\bbF^D$} is a real-valued continuous function over a compact set (the Cartesian product of $N$ copies of the unit sphere in $\bbF^D$) and so has a minimum, which we denote here as $\mu_\bbF(D,N)$.
Note $\mu_\bbF(D+1,N)\leq\mu_\bbF(D,N)$ since $\bbF^D$ isometrically embeds into $\bbF^{D+1}$.
(This inequality is known to be strict when $\bbF=\bbR$ and $D<N$~\cite{FickusJM18}.)
Since coherence is preserved by equivalence,
\eqref{eq.pf.cor.coherence of commuting.1} continues as
\begin{equation*}
\max_{n_1\neq n_2}\norm{\widetilde{\bfPhi}_{n_1}^{*}\widetilde{\bfPhi}_{n_2}^{}}_2
=\max_{r\in\calR}\max_{n_1\neq n_2}\abs{\ip{\bfphi_{r,n_1}}{\bfphi_{r,n_2}}}
\geq\max_{r\in\calR}\mu_\bbC(D_r,N)
=\mu_\bbC(\min_{r\in\calR}D_r,N)
\geq\mu_\bbC(\lfloor\tfrac{\widetilde{D}}{R}\rfloor,N),
\end{equation*}
as claimed.
Moreover, equality holds above if
\smash{$\max_{n_1\neq n_2}\abs{\ip{\bfphi_{r,n_1}}{\bfphi_{r,n_2}}}=\mu_\bbC(\lfloor\tfrac{\widetilde{D}}{R}\rfloor,N)$}
for each $r\in\calR$.

(b) Since \smash{$(\widetilde{\bfPhi}_n)_{n\in\calN}$} is equivalent to \smash{$(\widehat{\bfPhi}_n)_{n\in\calN}$},
\smash{$(\widetilde{\calF}_n)_{n\in\calN}$} is EI, EC or a TFF for \smash{$\widetilde{\calE}$} if and only if \smash{$(\widehat{\calF}_n)_{n\in\calN}$} is EI, EC or a TFF for \smash{$\widehat{\calE}$}, respectively.
Here, note that by~\eqref{eq.pf.cor.coherence of commuting.2},
\smash{$\dim(\calE_r)$} is constant over $r\in\calR$ if and only if $R$ divides \smash{$\widetilde{D}$} and $\dim(\calE_r)=D$ for any $r\in\calR$.
As such, all remaining claims follow from (c), (e) and (g) of Theorem~\ref{thm.SUNTF to ECTFF}.
\end{proof}

\begin{remark}
Let \smash{$(\widetilde{\bfPhi}_n)_{n=1}^{5}$} be a finite sequence of $N=5$ isometries into a complex space \smash{$\widetilde{\calE}$} of dimension \smash{$\widetilde{D}=4$},
each of rank $R=2$.
By~\eqref{eq.Welch bounds},
its squared spectral coherence
\smash{$\mu_{\rms}^2$}
is at least
\smash{$\frac{NR-\widetilde{D}}{\widetilde{D}(N-1)}
=\frac{5(2)-4}{4(5-1)}
=\frac38$}.
Moreover, as noted above,
an $\EITFF_\bbC(4,2,5)$ exists,
and any corresponding isometries achieve this lower bound.
That said, if we further require that the cross-Gram matrices of such isometries commute,
then Theorem~\ref{thm.commuting}(a) gives that \smash{$\mu_{\rms}^2$} is bounded below by the minimal squared coherence of five unit vectors in a complex space of dimension \smash{$D=\lfloor\frac{\widetilde{D}}{R}\rfloor=\lfloor\frac{4}{2}\rfloor=2$}.
In cases like this where $\bbF$, $D$ and $N$ do not satisfy the Gerzon bound~\eqref{eq.Gerzon},
the squared coherence of $N$ unit norm vectors $(\bfphi_n)_{n\in\calN}$ more generally satisfies the \textit{orthoplex bound}~\cite{ConwayHS96,BodmannH16},
namely that $\max_{n_1\neq n_2}\abs{\ip{\bfphi_{n_1}}{\bfphi_{n_2}}}^2\geq\frac1D$.
In particular, if the cross-Gram matrices of \smash{$(\widetilde{\bfPhi}_n)_{n=1}^{5}$} commute then
\smash{$\mu_{\rms}^2\geq\frac12$}.
Notably, for any prime power $Q$, \cite{BodmannH16} constructs a $\UNTF_\bbC(D,N)$ where $D=Q+1$ and $N=D^2+1$ that meets the orthoplex bound.
As a degenerate case of this with $Q=1$, consider
\begin{equation}
\label{eq.UNTF(2,5)}
\bfPhi
=\left[\begin{array}{ccccc}
\bfphi_1&\bfphi_2&\bfphi_3&\bfphi_4&\bfphi_5
\end{array}\right]
=\tfrac1{\sqrt{2}}\left[\begin{array}{cclll}
\sqrt{2}&0&1&\omega&\omega^2\\
0&\sqrt{2}&1&\omega^2&\omega
\end{array}\right],
\quad\omega=\exp(\tfrac{2\pi\rmi}{3}).
\end{equation}
The isometries $(\widehat{\bfPhi}_n)_{n=1}^{5}$ constructed via Theorem~\ref{thm.SUNTF to ECTFF} from two copies of this UNTF
thus yield a $\TFF_\bbC(4,2,5)$ that is neither EI nor EC but whose spectral coherence $\mu_\rms$ is minimal subject to the constraint of having commuting cross-Gram matrices.
\end{remark}

\begin{remark}
The ``only if'' result of Theorem~\ref{thm.commuting} does not directly generalize to the real setting.
In particular, there exist isometries
$(\widetilde{\bfPhi}_n)_{n\in\calN}$ into a real Euclidean space whose cross-Gram matrices commute but for which the remaining conclusions of Theorem~\ref{thm.commuting} do not hold.
For example, let \smash{$z=-\frac12+\frac{\sqrt{3}}{2}\rmi$} and consider
\begin{equation}
\label{eq.ETF(2,4)}
\bfPhi=\tfrac{1}{\sqrt{3}}\left[\begin{array}{cccc}
\sqrt{3}&                  1&        1&                   1\\
       0&\sqrt{2}\phantom{z}&\sqrt{2}z&\sqrt{2}\overline{z}
\end{array}\right],\quad
\bfPhi\bfPhi^*
=\left[\begin{array}{cc}2&0\\0&2\end{array}\right],
\quad
\bfPhi^*\bfPhi
=\tfrac1{\sqrt{3}}\left[\begin{array}{cccc}
\sqrt{3}&   \phm1&   \phm1&   \phm1\\
       1&\sqrt{3}&\phm\rmi&   -\rmi\\
       1&   -\rmi&\sqrt{3}&\phm\rmi\\
       1&\phm\rmi&   -\rmi&\sqrt{3}
\end{array}\right].
\end{equation}
Clearly, the columns of $\bfPhi$ form an $\ETF_\bbC(2,4)$,
namely the isometries of an $\EITFF_\bbC(2,1,4)$.
As mentioned earlier,
applying (the injective ring homomorphism) \smash{$u+\rmi v\mapsto\left[\begin{smallmatrix}u&-v\\v&\phm u\end{smallmatrix}\right]$} entrywise to these matrices produces the analogous objects for an $\EITFF_\bbR(4,2,4)$.
Each of its cross-Gram matrices is one of the following four matrices:
\begin{equation*}
\left[\begin{smallmatrix}1&0\\0&1\end{smallmatrix}\right],\quad
\tfrac1{\sqrt{3}}\left[\begin{smallmatrix}1&0\\0&1\end{smallmatrix}\right],\quad
\tfrac1{\sqrt{3}}\left[\begin{smallmatrix}0&-1\\1&\phm0\end{smallmatrix}\right],\quad
\tfrac1{\sqrt{3}}\left[\begin{smallmatrix}\phm0&1\\-1&0\end{smallmatrix}\right].
\end{equation*}
These matrices commute.
Indeed, they arise by applying a ring homomorphism to elements of a field.
However, in contrast to Theorem~\ref{thm.commuting},
these real isometries are not equivalent to ones that arise via Theorem~\ref{thm.SUNTF to ECTFF} from two $\ETF_\bbR(2,4)$,
as the Gerzon bound gives that these do not exist.
Of course, Theorem~\ref{thm.commuting} does apply if these real isometries are regarded as complex isometries with real entries,
and they are equivalent to ones that arise via Theorem~\ref{thm.SUNTF to ECTFF} from two $\ETF_\bbC(2,4)$.
In fact, they arise from the ETF of~\eqref{eq.ETF(2,4)} and its entrywise conjugate since $\left[\begin{smallmatrix}u&-v\\v&\phm u\end{smallmatrix}\right]
=\frac1{\sqrt{2}}\left[\begin{smallmatrix}\phm1&1\\-\rmi&\rmi\end{smallmatrix}\right]
\left[\begin{smallmatrix}u+\rmi v&0\\0&u-\rmi v\end{smallmatrix}\right]
\frac1{\sqrt{2}}\left[\begin{smallmatrix}1&\phm\rmi\\1&-\rmi\end{smallmatrix}\right]$
for any $u,v\in\bbR$.
\end{remark}

\subsection{Filter banks}

We devote the remainder of this section to a special case of Theorem~\ref{thm.commuting} of particular mathematical and historical significance, namely where the cross-Gram matrices are circulant with respect to a finite abelian group.
As we shall discuss, this is intimately related to the classical concept of a filter bank and its associated polyphase matrix.

Let $\calG$ be a finite abelian group of order $G$ whose operation is denoted as addition.
For any $g_1\in\calG$, define a linear \textit{translation} operator $\brT^{g_1}$ on $\bbC^{\calG}$ by $\brT^{g_1}\bfdelta_{g_2}:=\bfdelta_{g_1+g_2}$ for each $g_2\in\calG$.
Thus, $g\mapsto\brT^g$ is a homomorphism.
Each $\brT^g$ maps the standard basis for $\bbC^\calG$ to another ONB and so is unitary, implying $(\brT^g)^*=(\brT^g)^{-1}=\brT^{-g}$.
It follows that $\calA:=\Span_\bbC(\brT^g)_{g\in\calG}$ is a commutative complex $*$-algebra.
We call its members \textit{filters (over $\calG$)}.

Since $\sum_{g\in\calG}\brT^g\bff(g)\bfdelta_0=\sum_{g\in\calG}\bfdelta_g\bff(g)=\bff$ for any $\bff\in\bbC^\calG$,
the (synthesis) map $\bff\mapsto\sum_{g\in\calG}\brT^g\bff(g)$ from $\bbC^\calG$ into $\calA$ has $\bfF\mapsto\bfF\bfdelta_0$ as an inverse.
For any $\bff\in\bbC^\calG$ and $\bfF\in\calA$,
we thus have $\bfF=\sum_{g\in\calG}\brT^g\bff(g)$ if and only if $\bff=\bfF\bfdelta_0$ is the \textit{impulse response} of $\bfF$.
(This implies $(\brT^g)_{g\in\calG}$ is linearly independent;
in fact, it is equal-norm orthogonal since $\ip{\brT^{g_1}}{\brT^{g_2}}_\Fro=\Tr(\brT^{g_2-g_1})$ for any $g_1,g_2\in\calG$.)
Since $\calA$ is a commutative $*$-algebra, this vector space isomorphism induces a commutative $*$-algebra on $\bbC^\calG$.
The induced product of $\bff_1,\bff_2\in\bbC^\calG$ is called their \textit{convolution} $\bff_1*\bff_2\in\bbC^\calG$.
Multiplying $\sum_{g_1\in\calG}\brT^{g_1}\bff_1(g_1)$ and $\sum_{g_2\in\calG}\brT^{g_2}\bff_2(g_2)$ reveals that
\begin{equation*}
(\bff_1*\bff_2)(g)
=\smash{\sum_{\substack{g_1,g_2\in\calG\\g_1+g_2=g}}}\bff_1(g_1)\bff_2(g_2)
=\sum_{g_1\in\calG}\bff_1(g_1)\bff_2(g_{}-g_1)
=\sum_{g_2\in\calG}\bff_1(g_{}-g_2)\bff_2(g_2).
\end{equation*}
The induced multiplicative identity is $\brI\bfdelta_0=\bfdelta_0$.
Taking the adjoint of $\sum_{g\in\calG}\brT^g\bff(g)$ gives that the induced involution of $\bff\in\bbC^\calG$ is its \textit{conjugate reversal} \smash{$\bff^{\star}\in\bbC^\calG$},
\smash{$\bff^{\star}(g):=\overline{\bff(-g)}$}.
If $\bfF,\bfX\in\calA$ have $\bff,\bfx\in\bbC^\calG$ as their respective impulse responses, $\bfF\bfx=\bfF\bfX\bfdelta_0=\bff*\bfx$.
Filters thus equate to \textit{convolutional operators} on $\bbC^\calG$.
Thus,
$(\brT^{g_1}\bfx)(g_2)
=(\bfdelta_{g_1}*\bfx)(g_2)
=\bfx(g_2-g_1)$
and
$\bfF(g_1,g_2)
=(\bff*\bfdelta_{g_2})(g_1)
=\bff(g_1-g_2)$
for any $g_1,g_2\in\calG$.
It follows that $\bfF\in\bbC^{\calG\times\calG}$ is a filter if and only if it is \textit{($\calG$-)circulant},
that is, $\bfF(g_1+g,g_2+g)=\bfF(g_1,g_2)$ for any $g_1,g_2,g\in\calG$.

Now let $\calG$ be a subgroup of a finite abelian group $\calH$ and let \smash{$(\widetilde{\bfphi}_n)_{n\in\calN}$} be a finite sequence in \smash{$\bbC^\calH$}.
For each $n\in\calN$, let \smash{$\widetilde{\bfPhi}_n$} be the synthesis map of \smash{$(\brT^g\widetilde{\bfphi}_n)_{g\in\calG}$} and let \smash{$\widetilde{\bfPhi}$} be the synthesis map of their concatenation \smash{$(\brT^g\widetilde{\bfphi}_n)_{(n,g)\in\calN\times\calG}$}.
This map \smash{$\widetilde{\bfPhi}$} and its adjoint \smash{$\widetilde{\bfPhi}^*$} are called the \textit{synthesis} and \textit{analysis filter banks} of \smash{$(\widetilde{\bfphi}_n)_{n\in\calN}$} (with respect to $\calG$), respectively.
Applying the latter to any \smash{$\bfy\in\bbC^\calH$} yields \smash{$\widetilde{\bfPhi}^*\bfy\in\bbC^{\calN\times\calG}$} where
\begin{equation*}
(\widetilde{\bfPhi}^*\bfy)(n,g)
=\ip{\brT^g\widetilde{\bfphi}_n}{\bfy}
=\sum_{h\in\calH}\overline{\widetilde{\bfphi}_n(h-g)}\bfy(h)
=(\widetilde{\bfphi}_n^{\star}*\bfy)(g)
\end{equation*}
for any $n\in\calN$ and $g\in\calG$,
and so $\widetilde{\bfPhi}^*\bfy=\oplus_{n\in\calN}\dsample(\widetilde{\bfphi}_n^{\star}*\bfy)$ where $\dsample$ is the \textit{downsampling} map from $\bbC^\calH$ into $\bbC^\calG$, $(\dsample\bfy)(g):=\bfy(g)$.
Similarly, $\widetilde{\bfPhi}(\bfx_n)_{n\in\calN}=\sum_{n\in\calN}\widetilde{\bfphi}_n*(\usample\bfx_n)$ for any $(\bfx_n)_{n\in\calN}\in\bbC^{\calN\times\calG}$ where $\usample=\dsample^*$ is the \textit{upsampling} map defined by $(\usample\bfx_n)(g)=\bfx_n(g)$ for each $g\in\calG$ and $(\usample\bfx_n)(h)=0$ for each $h\in\calH\backslash\calG$.
That is, $\widetilde{\bfPhi}(\bfx_n)_{n\in\calN}=\sum_{n\in\calN}\widetilde{\bfPhi}_n\bfx_n$ where each map $\bfx_n\mapsto\widetilde{\bfPhi}_n\bfx_n=\widetilde{\bfphi}_n*(\usample\bfx_n)$ is a \textit{channel} of the synthesis filter bank.
Such filter banks are fundamental to the study of \textit{wavelets}.
For any $n_1,n_2\in\calN$, the matrix \smash{$\widetilde{\bfPhi}_{n_1}^{*}\widetilde{\bfPhi}_{n_2}^{}\in\bbC^{\calG\times\calG}$} has
\begin{equation*}
(\widetilde{\bfPhi}_{n_1}^{*}\widetilde{\bfPhi}_{n_2}^{})(g_1,g_2)
=\ip{\brT^{g_1}\widetilde{\bfphi}_{n_1}}{\brT^{g_2}\widetilde{\bfphi}_{n_2}}
=\ip{\widetilde{\bfphi}_{n_1}}{\brT^{g_2-g_1}\widetilde{\bfphi}_{n_2}}
\end{equation*}
for any $g_1,g_2\in\calG$,
and so is $\calG$-circulant.
Since $\calA$ is commutative,
it follows that if \smash{$(\brT^g\widetilde{\bfphi}_n)_{g\in\calG}$} is orthonormal for each $n\in\calN$,
then \smash{$(\widetilde{\bfPhi}_n)_{n\in\calN}$} are isometries to which Theorem~\ref{thm.commuting} applies.
As we now explain, when this occurs, the matrices $\bfU$ and $\bfV$ of Theorem~\ref{thm.commuting} can be made explicit.

A \textit{character} of $\calG$ is a homomorphism $\gamma$ from $\calG$ into the complex unit circle $\bbT$.
The \textit{Pontryagin dual (group)} of $\calG$ is the set $\widehat{\calG}$ of all such $\gamma$ equipped with pointwise multiplication.
Its identity $1$ has $1(g):=1$ for each $g\in\calG$.
We identify each $\gamma\in\widehat{\calG}$ with a vector $\bfgamma\in\bbC^\calG$ whose $g$th entry $\bfgamma(g):=\gamma(g)$ is unimodular.
These vectors are orthogonal since, for any distinct \smash{$\gamma_1,\gamma_2\in\widehat{\calG}$},
the value $\ip{\bfgamma_1}{\bfgamma_2}=\sum_{g\in\calG}(\gamma_1^{-1}\gamma_2^{})(g)$ is preserved by multiplication by $(\gamma_1^{-1}\gamma_2^{})(g_0)\notin\set{0,1}$ for some $g_0\in\calG$.
Thus, \smash{$\#(\widehat{\calG})\leq G$} and $\bfGamma^*\bfGamma=G\brI$ where \smash{$\bfGamma\in\bbC^{\calG\times\widehat{\calG}}$}, $\bfGamma(g,\gamma):=\gamma(g)$ is the \textit{character table} of $\calG$,
namely the synthesis matrix of \smash{$(\bfgamma)_{\gamma\in\widehat{\calG}}$}.
Each such $\bfgamma$ is moreover an eigenvector for any filter $\sum_{g\in\calG}\brT^g\bff(g)$ with eigenvalue $(\bfGamma^*\bff)(\gamma)=\ip{\bfgamma}{\bff}$:
\begin{equation*}
(\bff*\bfgamma)(g)
=\sum_{g'\in\calG}\bff(g')\bfgamma(g-g')
=\bfgamma(g)\sum_{g'\in\calG}\overline{\bfgamma(g')}\bff(g')
=\bfgamma(g)(\bfGamma^*\bff)(\gamma)
\end{equation*}
for any $g\in\calG$.
Thus,
$\bfF\bfGamma=\bfGamma\diag(\bfGamma^*\bff)$ where $\diag(\bfGamma^*\bff)$ is the diagonal complex \smash{$\widehat{\calG}\times\widehat{\calG}$} matrix with $(\bfGamma^*\bff)(\gamma)$ as its $\gamma$th diagonal entry.

Notably, for any finite abelian group $\calG$,
one can construct an injective homomorphism \smash{$\sigma:\calG\rightarrow\widehat{\calG}$},
implying $\#(\widehat{\calG})=G$ and that $\sigma$ is an isomorphism.
This isomorphism is not natural in the categorical sense.
Rather, when $\calG=\bbZ_N$, we can take $\bigparen{\sigma(n)}(m):=\exp(\frac{2\pi\rmi mn}{N})$,
and more generally write $\calG$ as a direct product of such cyclic groups and take tensor products of such characters.
In the proof of Lemma~\ref{lem.autocor of mult char}, we revisit this issue in the special case where $\calG$ is the additive group of a finite field.
Since $\#(\widehat{\calG})=G$, the character table $\bfGamma$ is a complex Hadamard matrix,
implying $\bfGamma^{-1}=\frac{1}{G}\bfGamma^*$.
Here, \smash{$(\bfgamma)_{\gamma\in\widehat{\calG}}$} is the \textit{Fourier basis} for $\bbC^{\calG}$ and its analysis matrix $\bfGamma^*$ is the \textit{discrete Fourier transform (over $\calG$)}.
Since $\bbC^\calG$ is a commutative $*$-algebra,
the vector space isomorphism $\bfGamma^*$ induces a commutative $*$-algebra on \smash{$\bbC^{\widehat{\calG}}$}.
Since any filter diagonalizes as \smash{$\sum_{g\in\calG}\brT^g\bff(g)
=\bfGamma\diag(\bfGamma^*\bff)\bfGamma^{-1}
=\frac1G\bfGamma\diag(\bfGamma^*\bff)\bfGamma^{*}$},
the induced multiplication and involution on \smash{$\bbC^{\widehat{\calG}}$} are pointwise multiplication and conjugation, respectively.
We refer to the three isomorphic complex $*$-algebras $\calA$, $\bbC^\calG$ and \smash{$\bbC^{\widehat{\calG}}$} as \textit{Fourier algebras}.

Under the above conventions, we have the following technical lemma,
which characterizes those synthesis channels which convolve an upsampled $\bfx\in\bbC^{\calG}$ with some $\widetilde{\bfphi}_n\in\bbC^{\calH}$.

\begin{lemma}
\label{lem.polyphase}
Let $\calK$ be a transversal of a finite abelian group $\calH$ with respect to some subgroup $\calG$ of order $G$.
Define unitaries \smash{$\bfU:=\bfP\tfrac1{\sqrt{G}}\bfGamma^\calK\bfQ$} and \smash{$\bfV:=\frac1{\sqrt{G}}\bfGamma$} where \smash{$\bfGamma^\calK$} is the direct sum of $\calK$-indexed copies of the \smash{$\calG\times\widehat{\calG}$} character table $\bfGamma$ of $\calG$ and where \smash{$\bfP:\bbC^{\calK\times\calG}\rightarrow\bbC^{\calH}$} and
\smash{$\bfQ:\bbC^{\widehat{\calG}\times\calK}\rightarrow\bbC^{\calK\times\widehat{\calG}}$} are permutation matrices such that
\smash{$\bfP\bfdelta_{(k,g)}:=\bfdelta_{k+g}$} and \smash{$\bfQ\bfdelta_{\gamma,k}:=\bfdelta_{k,\gamma}$}.

Let $\widetilde{\bfPhi}_n:\bbC^\calG\rightarrow\bbC^\calH$ and \smash{$\widehat{\bfPhi}_n:\bbC^{\widehat{\calG}}\rightarrow\bbC^{\widehat{\calG}\times\calK}$} be linear maps such that \smash{$\widetilde{\bfPhi}_n=\bfU\widehat{\bfPhi}_n\bfV^*$}.
Then \smash{$\widetilde{\bfPhi}_n$} is the synthesis map of \smash{$(\brT^g\widetilde{\bfphi}_n)_{g\in\calG}$} for some \smash{$\widetilde{\bfphi}_n\in\bbC^\calH$} if and only if there exists a finite sequence
\smash{$(\bfphi_{\gamma,n})_{\gamma\in\widehat{\calG}}$} in \smash{$\bbC^{\calK}$} such that \smash{$(\widehat{\bfPhi}_n\bfx)(\gamma,k)=\bfphi_{\gamma,n}(k)\bfx(\gamma)$} for any
\smash{$\bfx\in\bbC^{\widehat{\calG}}$},
\smash{$\gamma\in\widehat{\calG}$} and $k\in\calK$.
When this occurs,
\smash{$\bfphi_{\gamma,n}(k)=(\bfGamma^*\dsample\brT^{-k}\widetilde{\bfphi}_n)(\gamma)$} for any \smash{$\gamma\in\widehat{\calG}$} and $k\in\calK$.
When this occurs and there moreover exists \smash{$\bfphi_n\in\bbC^\calK$} such that \smash{$\bfphi_{\gamma,n}=\bfphi_n$} for any \smash{$\gamma\in\widehat{\calG}$} then
\begin{equation}
\label{eq.constant polyphase}
\widetilde{\bfphi}_n(h)
=\left\{\begin{array}{cl}
\bfphi_{n}(h),&\ h\in\calK,\\
0,&\ h\notin\calK.
\end{array}\right.
\end{equation}
\end{lemma}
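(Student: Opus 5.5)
Since $\bfU$ and $\bfV$ are unitary, $\widetilde{\bfPhi}_n=\bfU\widehat{\bfPhi}_n\bfV^*$ is equivalent to $\widehat{\bfPhi}_n=\bfU^*\widetilde{\bfPhi}_n\bfV$. We therefore compute $\bfU^*\widetilde{\bfPhi}_n\bfV$ explicitly when $\widetilde{\bfPhi}_n$ synthesizes translates, and conversely compute $\bfU\widehat{\bfPhi}_n\bfV^*$ when $\widehat{\bfPhi}_n$ has the stated ``diagonal'' form.

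First we check that $\bfU$ and $\bfV$ are unitary. Since $\calK$ is a transversal, $(k,g)\mapsto k+g$ is a bijection $\calK\times\calG\to\calH$, so $\bfP$ is a permutation matrix. $\bfQ$ is a perfect shuffle. $\frac1{\sqrt G}\bfGamma$ is unitary because $\bfGamma$ is a complex Hadamard matrix.

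Next we characterize synthesis maps of translates. A linear $\widetilde{\bfPhi}_n:\bbC^\calG\to\bbC^\calH$ is the synthesis map of $(\brT^g\widetilde{\bfphi}_n)_{g\in\calG}$ for some $\widetilde{\bfphi}_n$ if and only if $\widetilde{\bfPhi}_n\brT^{g}=\brT^{g}\widetilde{\bfPhi}_n$ for all $g\in\calG$, with $\brT^g$ acting on $\bbC^\calG$ and $\bbC^\calH$ respectively. In that case take $\widetilde{\bfphi}_n:=\widetilde{\bfPhi}_n\bfdelta_0$. Pulled back through $\bfP$, translation by $g\in\calG$ on $\bbC^\calH$ becomes $\oplus_{k\in\calK}\brT^g$ on $\bbC^{\calK\times\calG}$, since $k+g'+g$ has coset representative $k$. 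So the condition says each block $\dsample\brT^{-k}\widetilde{\bfPhi}_n:\bbC^\calG\to\bbC^\calG$ is a filter. This block is exactly the $k$th block of $\bfP^*\widetilde{\bfPhi}_n$, and its impulse response is $\dsample\brT^{-k}\widetilde{\bfphi}_n$.

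By the diagonalization $\bfF=\frac1G\bfGamma\diag(\bfGamma^*\bff)\bfGamma^*$, a map $\bbC^\calG\to\bbC^{\calK\times\calG}$ has all $\calK$-blocks filters if and only if conjugating by $\frac1{\sqrt G}\bfGamma^\calK$ on the left and $\frac1{\sqrt G}\bfGamma$ on the right yields blocks that are diagonal $\widehat{\calG}\times\widehat{\calG}$ matrices. The forward direction is the diagonalization itself. The converse holds because the conjugate of any diagonal matrix by the Fourier matrix is a filter, as $\bfGamma$ is invertible and $\bfF\mapsto\bfGamma^*\bff$ is onto $\bbC^{\widehat{\calG}}$.

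Now write $\bfphi_{\gamma,n}(k)$ for the $\gamma$th diagonal entry of block $k$. Then $\bfphi_{\gamma,n}(k)=(\bfGamma^*\dsample\brT^{-k}\widetilde{\bfphi}_n)(\gamma)$. The shuffle $\bfQ^*$ turns this $\calK$-indexed family of diagonal blocks into the map $\bfx\mapsto\bigl((\gamma,k)\mapsto\bfphi_{\gamma,n}(k)\bfx(\gamma)\bigr)$, which is the stated form of $\widehat{\bfPhi}_n$. Running these steps backward gives the converse. The main obstacle is bookkeeping: getting normalizations and index orders right so that the factors $\frac1{\sqrt G}$ combine into $\frac1G\bfGamma\diag(\cdot)\bfGamma^*$, and so that $\bfQ$ matches the $\widehat{\calG}\times\calK$ ordering of $\widehat{\bfPhi}_n$'s codomain.

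Finally, suppose $\bfphi_{\gamma,n}=\bfphi_n$ for every $\gamma$. Then for each $k$ the vector $\bfGamma^*\dsample\brT^{-k}\widetilde{\bfphi}_n$ is constant in $\gamma$ and equals $\bfphi_n(k)$. Since $\bfGamma^*\bfdelta_0=\boldsymbol{1}$ and $\bfGamma^*$ is injective, $\dsample\brT^{-k}\widetilde{\bfphi}_n=\bfphi_n(k)\bfdelta_0$. In other words, $\widetilde{\bfphi}_n(k+g)=\bfphi_n(k)$ if $g=0$ and $0$ otherwise. Because $\calK+\calG=\calH$ uniquely, this is exactly \eqref{eq.constant polyphase}.
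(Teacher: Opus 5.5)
Your proof is correct, and your forward direction reaches the same picture as the paper's: the $k$th block of $\bfP^*\widetilde{\bfPhi}_n$ is the filter with impulse response $\dsample\brT^{-k}\widetilde{\bfphi}_n$, which $\frac{1}{\sqrt{G}}\bfGamma$ diagonalizes before the shuffle $\bfQ^*$. The logic differs, though, most notably in the converse.

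The paper obtains the filter blocks by directly computing $(\bfP^*\widetilde{\bfPhi}_n\bfy)(k,g)$ as a convolution. It then proves ``if'' by a dimension count. The set $\calT$ of translate-synthesis maps and the set $\calS$ of block-diagonal arrays of vectors in $\bbC^\calK$ are both $H$-dimensional. The map $\bfL(\widetilde{\bfPhi})=\bfU^*\widetilde{\bfPhi}\bfV$ is injective, and the ``only if'' direction gives $\bfL(\calT)\subseteq\calS$, which forces $\bfL(\calT)=\calS$.

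You instead identify $\calT$ as the maps intertwining $\calG$-translation on $\bbC^\calG$ with $\calG$-translation on $\bbC^\calH$, and observe that $\bfP^*\brT^g\bfP=\oplus_{k\in\calK}\brT^g$. Every link of your chain is then an equivalence: intertwining, filter blocks, diagonal Fourier blocks, and the stated form of $\widehat{\bfPhi}_n$. So both directions come at once, and the converse is constructive, with $\widetilde{\bfphi}_n=\widetilde{\bfPhi}_n\bfdelta_0$. This also explains the lemma conceptually: equivariant maps into $\calK$ copies of the regular representation are stacks of filters.

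Your route does rely on two facts that should be stated explicitly. First, a matrix $\bfF$ commuting with every $\brT^g$ is a filter, since $\bfF\bfx=\sum_{g\in\calG}\brT^g\bfF\bfdelta_0\,\bfx(g)=(\bfF\bfdelta_0)*\bfx$. Second, every conjugate $\frac1G\bfGamma\bfLambda\bfGamma^*$ of a diagonal matrix $\bfLambda$ is a filter; you do justify this via surjectivity of $\bfGamma^*$.

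The paper's dimension count avoids checking reversibility step by step, so it is shorter. However, it only shows that the correspondence is bijective, not why.
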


\begin{proof}[Proof.]
To clarify, the ``$n$'' subscripts here are unnecessary but help maintain consistent notation.

($\Rightarrow$)
Assume \smash{$\widetilde{\bfPhi}_n$} is the synthesis map of \smash{$(\brT^g\widetilde{\bfphi}_n)_{g\in\calG}$}.
For any $\bfy\in\bbC^{\calG}$, $k\in\calK$ and $g\in\calG$,
\begin{equation*}
(\bfP^*\widetilde{\bfPhi}_n\bfy)(k,g)
=\ip{\bfP\bfdelta_{k,g}}{\widetilde{\bfPhi}_n\bfy}
=(\widetilde{\bfPhi}_n\bfy)(k+g)
=\sum_{g'\in\calG}(\brT^{g'}\widetilde{\bfphi}_n)(k+g)\bfy(g').
\end{equation*}
Since $(\brT^{g'}\widetilde{\bfphi}_n)(k+g)
=\widetilde{\bfphi}_n(k+g-g')
=(\brT^{-k}\widetilde{\bfphi}_n)(g-g')$ where $g-g'\in\calG$,
this continues as
\begin{equation*}
(\bfP^*\widetilde{\bfPhi}_n\bfy)(k,g)
=\sum_{g'\in\calG}(\brT^{-k}\widetilde{\bfphi}_n)(g-g')\bfy(g')
=\sum_{g'\in\calG}(\dsample\brT^{-k}\widetilde{\bfphi}_n)(g-g')\bfy(g')
=\bigparen{(\dsample\brT^{-k}\widetilde{\bfphi}_n)*\bfy}(g).
\end{equation*}
Identifying \smash{$\bbC^{\calK\times\calG}$} with the direct sum of $\calK$-indexed copies of \smash{$\bbC^\calG$},
this gives \smash{$\bfP^*\widetilde{\bfPhi}_n\bfy=\oplus_{k\in\calK}\bfF_{n,k}\bfy$} where, for each $k\in\calK$,
\smash{$\bfF_{n,k}$} is the filter on $\bbC^{\calG}$ that has \smash{$\bff_{n,k}:=\dsample\brT^{-k}\widetilde{\bfphi}_n$} as its impulse response.
(That is, \smash{$\bfP^*\widetilde{\bfPhi}_n$} is the ``vertical concatenation'' of these filters \smash{$(\bfF_{n,k})_{k\in\calK}$}.)
Now recall that any filter $\bfF$ on $\bbC^\calG$ has \smash{$\tfrac1G\bfGamma^*\bfF\bfGamma=\diag(\bfGamma^*\bff)$} where $\bff$ is the impulse response of $\bfF$.
Letting $\bfy=\bfGamma\bfx$ thus gives
\begin{equation*}
\tfrac{1}{G}(\bfGamma^{\calK})^*\bfP^*\widetilde{\bfPhi}_n\bfGamma\bfx
=\tfrac{1}{G}(\bfGamma^*)^{\calK}(\oplus_{k\in\calK}\bfF_{n,k}\bfGamma\bfx)
=\oplus_{k\in\calK}(\tfrac{1}{G}\bfGamma^*\bfF_{n,k}\bfGamma\bfx)
=\oplus_{k\in\calK}\bigparen{(\bfGamma^*\bff_{n,k})\odot\bfx}
\end{equation*}
for any \smash{$\bfx\in\bbC^{\widehat{\calG}}$},
where $\odot$ denotes entrywise multiplication.
As such, identifying the direct sum of $\calK$-indexed copies of \smash{$\bbC^{\widehat{\calG}}$} with \smash{$\bbC^{\calK\times\widehat{\calG}}$},
\begin{equation*}
\bigparen{\tfrac{1}{G}\bfQ^*(\bfGamma^\calK)^*\bfP^*\widetilde{\bfPhi}_n\bfGamma\bfx}(\gamma,k)
=\bigparen{\tfrac{1}{G}(\bfGamma^\calK)^*\bfP^*\widetilde{\bfPhi}_n\bfGamma\bfx}(k,\gamma)\\
=\bigparen{(\bfGamma^*\bff_{n,k})\odot\bfx}(\gamma)
\end{equation*}
for any $\bfx\in\bbC^{\widehat{\calG}}$, $\gamma\in\widehat{\calG}$ and $k\in\calK$.
Here since
\begin{equation*}
\tfrac{1}{G}\bfQ^*(\bfGamma^\calK)^*\bfP^*\widetilde{\bfPhi}_n\bfGamma
=\bigparen{\bfP\tfrac1{\sqrt{G}}\bfGamma^\calK\bfQ}^*\widetilde{\bfPhi}_n(\tfrac{1}{\sqrt{G}}\bfGamma)
=\bfU^*\widetilde{\bfPhi}_n\bfV
=\widehat{\bfPhi}_n,
\end{equation*}
this reduces to the following for any such $\bfx$, $\gamma$ and $k$,
\begin{equation*}
(\widehat{\bfPhi}_n\bfx)(\gamma,k)
=(\bfGamma^*\bff_{n,k})(\gamma)\bfx(\gamma)
=(\bfGamma^*\dsample\brT^{-k}\widetilde{\bfphi}_n)(\gamma)\bfx(\gamma).
\end{equation*}
As such, for any finite sequence \smash{$(\bfphi_{\gamma,n})_{\gamma\in\widehat{\calG}}$} in \smash{$\bbC^{\calK}$},
we have \smash{$(\widehat{\bfPhi}_n\bfx)(\gamma,k)=\bfphi_{\gamma,n}(k)\bfx(\gamma)$} for any such $\bfx$, $\gamma$ and $k$ if and only if
$\bfphi_{\gamma,n}(k)\bfx(\gamma)=(\bfGamma^*\dsample\brT^{-k}\widetilde{\bfphi}_n)(\gamma)\bfx(\gamma)$ for any such $\bfx$, $\gamma$ and $k$,
namely if and only if \smash{$\bfphi_{\gamma,n}(k)=(\bfGamma^*\dsample\brT^{-k}\widetilde{\bfphi}_n)(\gamma)$}
for any \smash{$\gamma\in\widehat{\calG}$} and $k\in\calK$.

When this occurs and there moreover exists $\bfphi_n\in\bbC^\calK$ such that $\bfphi_{\gamma,n}=\bfphi_n$ for any $\gamma\in\widehat{\calG}$ then
\begin{equation*}
(\bfGamma^*\dsample\brT^{-k}\widetilde{\bfphi}_n)(\gamma)
=\bfphi_{\gamma,n}(k)
=\bfphi_{n}(k)
=\bfphi_{n}(k)\bfchi_{\widehat{\calG}}(\gamma)
=\bigparen{\bfchi_{\widehat{\calG}}\bfphi_{n}(k)}(\gamma)
\end{equation*}
for any $k\in\calK$ and $\gamma\in\widehat{\calG}$, where \smash{$\bfchi_{\widehat{\calG}}\in\bbC^{\widehat{\calG}}$} is the ``all-ones'' characteristic function of \smash{$\widehat{\calG}$}.
Since \smash{$\bfchi_{\widehat{\calG}}=\bfGamma^*\bfdelta_0$},
this implies $\dsample\brT^{-k}\widetilde{\bfphi}_n=\bfdelta_0\bfphi_{n}(k)$ for any $k\in\calK$.
Thus,
\begin{equation*}
\widetilde{\bfphi}_n(g+k)
=(\brT^{-k}\widetilde{\bfphi}_n)(g)
=(\dsample\brT^{-k}\widetilde{\bfphi}_n)(g)
=\bfdelta_0(g)\bfphi_{n}(k)
\end{equation*}
for any $g\in\calG$ and $k\in\calK$.
Since $\calK$ is a transversal of $\calH$ with respect to $\calG$,
any $h\in\calH$ can be uniquely decomposed as $h=g+k$ where $(g,k)\in\calG\times\calK$,
and so this implies~\eqref{eq.constant polyphase}.

($\Leftarrow$)
Let $\calT$ be the set of linear maps
\smash{$\widetilde{\bfPhi}_n:\bbC^\calG\rightarrow\bbC^\calH$} that are the synthesis maps of \smash{$(\brT^g\widetilde{\bfphi}_n)_{g\in\calG}$} for some \smash{$\widetilde{\bfphi}_n\in\bbC^\calH$}.
Here since \smash{$\widetilde{\bfphi}_n\in\bbC^\calH$} is arbitrary and uniquely determines such a map \smash{$\widetilde{\bfPhi}_n$},
this set $\calT$ is a subspace of \smash{$\bbC^{\calH\times\calG}$} of dimension $H=\#(\calH)$.
Similarly, let $\calS$ be the set of linear maps
\smash{$\widehat{\bfPhi}_n:\bbC^{\widehat{\calG}}\rightarrow\bbC^{\widehat{\calG}\times\calK}$}
for which there exists a finite sequence
\smash{$(\bfphi_{\gamma,n})_{\gamma\in\widehat{\calG}}$} in \smash{$\bbC^{\calK}$} such that \smash{$(\widehat{\bfPhi}_n\bfx)(\gamma,k)=\bfphi_{\gamma,n}(k)\bfx(\gamma)$} for any
\smash{$\bfx\in\bbC^{\widehat{\calG}}$},
\smash{$\gamma\in\widehat{\calG}$} and $k\in\calK$,
that is, block-diagonal $\widehat{\calG}\times\widehat{\calG}$ arrays of column vectors in $\bbC^\calK$.
This set $\calS$ is a subspace of \smash{$\bbC^{(\widehat{\calG}\times\calK)\times\widehat{\calG}}$} of dimension $\#(\widehat{\calG})\#(\calK)=G\frac{H}{G}=H$.
Now note that since $\bfU$ and $\bfV$ are unitary, \smash{$\bfL:\bbC^{\calH\times\calG}\rightarrow\bbC^{(\widehat{\calG}\times\calK)\times\widehat{\calG}}$},
\smash{$\bfL(\widetilde{\bfPhi}):=\bfU^*\widetilde{\bfPhi}\bfV$} is an injective linear map between spaces of dimension $HG$,
and is thus invertible.
By the ``only if'' direction already proven above, we further have $\bfL(\calT)\subseteq\calS$.
Since $\dim(\bfL(\calT))=\dim(\calT)=H=\dim(\calS)$,
this implies $\bfL(\calT)=\calS$.
In particular, if \smash{$\widehat{\bfPhi}_n:\bbC^{\widehat{\calG}}\rightarrow\bbC^{\widehat{\calG}\times\calK}$} is linear and there exists a finite sequence
\smash{$(\bfphi_{\gamma,n})_{\gamma\in\widehat{\calG}}$} in \smash{$\bbC^{\calK}$} such that \smash{$(\widehat{\bfPhi}_n\bfx)(\gamma,k)=\bfphi_{\gamma,n}(k)\bfx(\gamma)$} for any
\smash{$\bfx\in\bbC^{\widehat{\calG}}$},
\smash{$\gamma\in\widehat{\calG}$} and $k\in\calK$,
then \smash{$\widehat{\bfPhi}_n\in\calS=\bfL(\calT)$},
implying \smash{$\widehat{\bfPhi}_n=\bfU^*\widetilde{\bfPhi}_n\bfV$} for some $\widetilde{\bfPhi}_n$ that is the synthesis map of \smash{$(\brT^g\widetilde{\bfphi}_n)_{g\in\calG}$} for some \smash{$\widetilde{\bfphi}_n\in\bbC^\calH$}.
\end{proof}

\begin{example}
\label{ex.protopolyphase}
Let $\calH=\bbZ_4$, $\calG=\set{0,2}$ and $\calK=\set{0,1}$.
For any $\widetilde{\bfphi}_n\in\bbC^{\bbZ_4}$,
the corresponding map $\widetilde{\bfPhi}_n$ of Lemma~\ref{lem.polyphase} is
\begin{equation}
\label{ex.protopolyphase.1}
\widetilde{\bfPhi}_n
=\left[\begin{array}{cc}
\widetilde{\bfphi}_n(0)&\widetilde{\bfphi}_n(2)\\
\widetilde{\bfphi}_n(1)&\widetilde{\bfphi}_n(3)\\
\widetilde{\bfphi}_n(2)&\widetilde{\bfphi}_n(0)\\
\widetilde{\bfphi}_n(3)&\widetilde{\bfphi}_n(1)
\end{array}\right].
\end{equation}
Permuting its rows according to coset representative yields a vertical concatenation of two matrices,
each of which is circulant with respect to the cyclic group $\calG$,
and so expressible as a polynomial in terms of the translation operator $\brT=\left[\begin{smallmatrix}0&1\\1&0\end{smallmatrix}\right]$ of its generator:
\begin{equation}
\label{eq.ex.protopolyphase.2}
\bfP^*\widetilde{\bfPhi}_n
=\left[\begin{array}{cc}
\widetilde{\bfphi}_n(0)&\widetilde{\bfphi}_n(2)\\
\widetilde{\bfphi}_n(2)&\widetilde{\bfphi}_n(0)\\
\widetilde{\bfphi}_n(1)&\widetilde{\bfphi}_n(3)\\
\widetilde{\bfphi}_n(3)&\widetilde{\bfphi}_n(1)
\end{array}\right]
=\left[\begin{array}{c}
\widetilde{\bfphi}_n(0)\left[\begin{smallmatrix}1&0\\0&1\end{smallmatrix}\right]+
\widetilde{\bfphi}_n(2)\left[\begin{smallmatrix}0&1\\1&0\end{smallmatrix}\right]\smallskip\\
\widetilde{\bfphi}_n(1)\left[\begin{smallmatrix}1&0\\0&1\end{smallmatrix}\right]+
\widetilde{\bfphi}_n(3)\left[\begin{smallmatrix}0&1\\1&0\end{smallmatrix}\right]
\end{array}\right]
=\left[\begin{array}{c}
\widetilde{\bfphi}_n(0)\brI+\widetilde{\bfphi}_n(2)\brT\smallskip\\
\widetilde{\bfphi}_n(1)\brI+\widetilde{\bfphi}_n(3)\brT
\end{array}\right].
\end{equation}
We now exploit the fact that translation operators, like all filters, are diagonalized by the Fourier basis.
Here in particular,
$\frac1G\bfGamma^*\brT\bfGamma
=\tfrac12
\left[\begin{smallmatrix}1&\phm 1\\1&-1\end{smallmatrix}\right]
\left[\begin{smallmatrix}0&1\\1&0\end{smallmatrix}\right]
\left[\begin{smallmatrix}1&\phm 1\\1&-1\end{smallmatrix}\right]
=\left[\begin{smallmatrix}1&\phm0\\0&-1\end{smallmatrix}\right]$,
and so multiplying the previous matrix on the left and right by $\frac1G(\bfGamma^\calK)^*$ and $\bfGamma$, respectively, gives
\begin{equation*}
\tfrac1G(\bfGamma^\calK)^*\bfP^*\widetilde{\bfPhi}_n\bfGamma
=\tfrac12\left[\begin{array}{cc}\bfGamma^*&\bfzero\smallskip\\\bfzero&\bfGamma^*\end{array}\right]
\left[\begin{array}{c}
\widetilde{\bfphi}_n(0)\brI+\widetilde{\bfphi}_n(2)\brT\smallskip\\
\widetilde{\bfphi}_n(1)\brI+\widetilde{\bfphi}_n(3)\brT
\end{array}\right]\bfGamma
=\left[\begin{array}{cc}
\widetilde{\bfphi}_n(0)+\widetilde{\bfphi}_n(2)&0\\
0&\widetilde{\bfphi}_n(0)-\widetilde{\bfphi}_n(2)\\
\widetilde{\bfphi}_n(1)+\widetilde{\bfphi}_n(3)&0\\
0&\widetilde{\bfphi}_n(1)-\widetilde{\bfphi}_n(3)
\end{array}\right].
\end{equation*}
Perfectly shuffling the rows then gives a block-diagonal array of column vectors in $\bbC^\calK$,
namely
\begin{equation*}
\widehat{\bfPhi}_n
=\bfU^*\widetilde{\bfPhi}_n\bfV
=\left[\begin{array}{cc}
\widetilde{\bfphi}_n(0)+\widetilde{\bfphi}_n(2)&0\\
\widetilde{\bfphi}_n(1)+\widetilde{\bfphi}_n(3)&0\\
0&\widetilde{\bfphi}_n(0)-\widetilde{\bfphi}_n(2)\\
0&\widetilde{\bfphi}_n(1)-\widetilde{\bfphi}_n(3)
\end{array}\right]
=\left[\begin{array}{cc}
\bfphi_{1,n}&\bfzero\\
\bfzero&\bfphi_{-1,n}
\end{array}\right]
\end{equation*}
where, for any $z\in\set{1,-1}$,
the vector $\bfphi_{z,n}$ in $\bbC^\calK$ is obtained by converting each ``power of $\brT$'' in the right-hand quantity of~\eqref{eq.ex.protopolyphase.2} into a power of $z^{-1}$, namely
\begin{equation}
\label{eq.ex.protopolyphase.3}
\bfphi_{z,n}
=\left[\begin{array}{c}
\bfphi_{z,n}(0)\\
\bfphi_{z,n}(1)
\end{array}\right]
=\left[\begin{array}{c}
\widetilde{\bfphi}_n(0)+\widetilde{\bfphi}_n(2)z^{-1}\\
\widetilde{\bfphi}_n(1)+\widetilde{\bfphi}_n(3)z^{-1}
\end{array}\right].
\end{equation}
That is, $\bfphi_{z,n}$ is a column vector whose $k$th entry is the classical \textit{$z$-transform} of the portion of $\widetilde{\bfphi}_n$ indexed by the corresponding $k$th coset $k+\calG$ of $\calG$ in $\calH$.
This is the classical polyphase representation~\cite{Vaidyanathan93} of $\widetilde{\bfPhi}_n$.
To illustrate the final conclusion of Lemma~\ref{lem.polyphase},
note that there exists $\bfphi_n\in\bbC^\calK$ such that
\begin{equation*}
\left[\begin{array}{c}
\widetilde{\bfphi}_n(0)+\widetilde{\bfphi}_n(2)z^{-1}\\
\widetilde{\bfphi}_n(1)+\widetilde{\bfphi}_n(3)z^{-1}
\end{array}\right]
=\left[\begin{array}{c}
\bfphi_n(0)\\
\bfphi_n(1)
\end{array}\right]
\end{equation*}
for every $z\in\set{1,-1}$ if and only if
$\widetilde{\bfphi}_n(0)=\bfphi_n(0)$,
$\widetilde{\bfphi}_n(1)=\bfphi_n(1)$ and
$\widetilde{\bfphi}_n(2)=\widetilde{\bfphi}_n(3)=0$.
In this degenerate case,
$\widetilde{\bfPhi}_n$ is essentially the tensor product of an identity matrix and $\bfphi_n$:
\begin{equation*}
\widetilde{\bfPhi}_n=\left[\begin{array}{cc}
\bfphi_n(0)&0\\
\bfphi_n(1)&0\\
0&\bfphi_n(0)\\
0&\bfphi_n(1)
\end{array}\right].
\end{equation*}
\end{example}

Lemma~\ref{lem.polyphase} leads to the following generalization of the classical polyphase matrix of a filter bank over a cyclic group to that over an arbitrary finite abelian group:

\begin{definition}
\label{def.polyphase}
Under the hypotheses of Lemma~\ref{lem.polyphase},
the \textit{polyphase matrix} of a finite sequence \smash{$(\widetilde{\bfphi}_n)_{n\in\calN}$} in $\bbC^\calH$ is the function that maps any \smash{$\gamma\in\widehat{\calG}$} to the synthesis matrix $\bfPhi_\gamma$ of the associated finite sequence \smash{$(\bfphi_{\gamma,n})_{n\in\calN}$} in $\bbC^\calK$,
namely $\bfPhi_\gamma\in\bbC^{\calK\times\calN}$,
\begin{equation*}
\bfPhi_\gamma(k,n)
=\bfphi_{\gamma,n}(k)
:=(\bfGamma^*\dsample\brT^{-k}\widetilde{\bfphi}_n)(\gamma)
=\sum_{g\in\calG}\overline{\gamma(g)}\widetilde{\bfphi}_n(k+g).
\end{equation*}
\end{definition}

Combining this classical concept with the novel machinery introduced earlier in this section yields the following result concerning the maps $(\widetilde{\bfPhi}_n)_{n\in\calN}$ of the synthesis filter bank of $(\widetilde{\bfphi}_n)_{n\in\calN}$.
It generalizes---to the setting of noncyclic finite abelian groups---a known characterization~\cite{ChebiraFM11} of when such maps $(\widetilde{\bfPhi}_n)_{n\in\calN}$ are isometries whose images form a TFF for $\bbC^\calH$,
and provides novel characterizations of when such a TFF is EC or EI.

\begin{theorem}
\label{thm.filter banks}
Assume the hypotheses of Lemma~\ref{lem.polyphase} and Definition~\ref{def.polyphase},
and let $\calG$ and $\calH$ have order $G$ and $H$, respectively, and so \smash{$K:=\#(\calK)=\tfrac{H}{G}$}.
Let $\widetilde{\calF}_n:=\im(\widetilde{\bfPhi}_n)$ for each $n\in\calN$ and let $N:=\#(\calN)$.
Then every $\widetilde{\bfPhi}_n$ is an isometry if and only if $\bfphi_{\gamma,n}$ is unit norm for every \smash{$\gamma\in\widehat{\calG}$} and $n\in\calN$.
When this occurs and $N>1$, \smash{$(\widetilde{\bfPhi}_n)_{n\in\calN}$} has minimal spectral coherence (over this class) if and only if each \smash{$(\bfphi_{\gamma,n})_{n\in\calN}$} has minimal coherence,
\smash{$(\widetilde{\calF}_n)_{n\in\calN}$} is a TFF or EITFF for \smash{$\bbC^\calH$} if and only if each \smash{$(\bfphi_{\gamma,n})_{n\in\calN}$} is a UNTF or ETF for \smash{$\bbC^{\calK}$}, respectively,
and \smash{$(\widetilde{\calF}_n)_{n\in\calN}$} is an ECTFF for \smash{$\bbC^\calH$} if and only if
\smash{$\bigparen{(\bfphi_{\gamma,n})_{n\in\calN}}_{\gamma\in\widehat{\calG}}$} is a \smash{$\SUNTF_\bbC(K,N,G)$}.
\end{theorem}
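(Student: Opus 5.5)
The plan is to reduce everything to Theorem~\ref{thm.SUNTF to ECTFF} and Theorem~\ref{thm.commuting} by means of Lemma~\ref{lem.polyphase}. Take $\bfU$ and $\bfV$ as defined in Lemma~\ref{lem.polyphase}; both are unitary. For each $n\in\calN$ set $\widehat{\bfPhi}_n:=\bfU^*\widetilde{\bfPhi}_n\bfV$, so that $\widetilde{\bfPhi}_n=\bfU\widehat{\bfPhi}_n\bfV^*$. Lemma~\ref{lem.polyphase} then gives $(\widehat{\bfPhi}_n\bfx)(\gamma,k)=\bfphi_{\gamma,n}(k)\bfx(\gamma)$. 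Identify $\bbC^{\widehat{\calG}\times\calK}$ with $\oplus_{\gamma\in\widehat{\calG}}\bbC^\calK$. With this identification, $\widehat{\bfPhi}_n$ is exactly the map $\bfx\mapsto\oplus_{\gamma}\bfphi_{\gamma,n}\bfx(\gamma)$ of Theorem~\ref{thm.SUNTF to ECTFF}, where $\calR:=\widehat{\calG}$ and $\calE_\gamma:=\bbC^\calK$ for every $\gamma$. In particular each $\dim(\calE_\gamma)=K$ is constant and $\#(\calR)=G$.

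For the isometry claim, unitary invariance gives $\widetilde{\bfPhi}_n^*\widetilde{\bfPhi}_n=\bfV\widehat{\bfPhi}_n^*\widehat{\bfPhi}_n\bfV^*$. So $\widetilde{\bfPhi}_n$ is an isometry if and only if $\widehat{\bfPhi}_n$ is. The computation in Theorem~\ref{thm.SUNTF to ECTFF}(a) does not use unit norms: $\widehat{\bfPhi}_n^*\widehat{\bfPhi}_n$ is diagonal with entries $\norm{\bfphi_{\gamma,n}}^2$. It therefore equals $\brI$ exactly when every $\bfphi_{\gamma,n}$ is unit norm. I expect this to be the only point needing care, because Theorem~\ref{thm.SUNTF to ECTFF} presupposes unit vectors. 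I would redo that one-line diagonal computation directly rather than cite it.

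Now assume the isometries hold. Lemma~\ref{lem.equivalence}(iii) shows that $(\widetilde{\bfPhi}_n)_{n\in\calN}$ is equivalent to $(\widehat{\bfPhi}_n)_{n\in\calN}$, with $\bfU$ fixed and the constant sequence $\bfV_n=\bfV$. Equivalence preserves cross-Gram norms, tightness, the EI property and the EC property. The spectral coherence of $(\widetilde{\bfPhi}_n)$ equals $\max_\gamma\max_{n_1\neq n_2}\abs{\ip{\bfphi_{\gamma,n_1}}{\bfphi_{\gamma,n_2}}}$ by Theorem~\ref{thm.SUNTF to ECTFF}(b).

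For the minimality claim, Lemma~\ref{lem.polyphase} shows that the map $\widetilde{\bfphi}_n\mapsto(\bfphi_{\gamma,n})_\gamma$ is a bijection between $\bbC^\calH$ and $(\bbC^\calK)^{\widehat{\calG}}$. So the layers $(\bfphi_{\gamma,n})_{n}$ range independently over all $N$-tuples of unit vectors in $\bbC^\calK$. The minimum of a maximum of independent quantities is attained exactly when each term attains its own minimum $\mu_\bbC(K,N)$. This gives the ``if and only if'' for minimal spectral coherence; the bound itself matches Theorem~\ref{thm.commuting}(a).

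Finally, the TFF, EITFF and ECTFF statements follow from Theorem~\ref{thm.SUNTF to ECTFF}(c), (e) and (g), since the dimension condition holds automatically. That gives an $\ECTFF_\bbC(KG,G,N)$ with $KG=H$, corresponding to a $\SUNTF_\bbC(K,N,G)$. This transfers to $(\widetilde{\calF}_n)$ in $\bbC^\calH$ via the equivalence.
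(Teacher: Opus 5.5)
Your proposal is correct and follows essentially the same route as the paper. It reduces the isometry question to the diagonal matrix $\widehat{\bfPhi}_n^*\widehat{\bfPhi}_n$ with entries $\norm{\bfphi_{\gamma,n}}^2$, then reads off the remaining claims from parts (b), (c), (e) and (g) of Theorem~\ref{thm.SUNTF to ECTFF} via the equivalence $\widetilde{\bfPhi}_n=\bfU\widehat{\bfPhi}_n\bfV^*$ of Lemma~\ref{lem.polyphase}, and your use of the bijectivity from the ($\Leftarrow$) direction of that lemma usefully makes explicit why minimal spectral coherence forces every layer to attain $\mu_\bbC(K,N)$, a step the paper leaves implicit.
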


\begin{proof}[Proof.]
Clearly, \smash{$\widetilde{\bfPhi}_n=\bfU\widehat{\bfPhi}_n\bfV^*$} is an isometry if and only if \smash{$\widehat{\bfPhi}_n$} is an isometry.
Here, Lemma~\ref{lem.polyphase} gives
\begin{equation*}
(\widehat{\bfPhi}_{n}^{*}\widehat{\bfPhi}_{n}^{})(\gamma_1,\gamma_2)
=\sum_{\gamma\in\widehat{\calG}}\sum_{k\in\calK}
\overline{\bfphi_{\gamma,n}(k)\bfdelta_{\gamma_1}(\gamma)}\bfphi_{\gamma,n}(k)\bfdelta_{\gamma_2}(\gamma)
=\sum_{\gamma\in\widehat{\calG}}\bfdelta_{\gamma_1}(\gamma)\bfdelta_{\gamma_2}(\gamma)\norm{\bfphi_{\gamma,n}}^2
\end{equation*}
for any \smash{$\gamma_1,\gamma_2\in\widehat{\calG}$},
and so \smash{$\widehat{\bfPhi}_{n}^{*}\widehat{\bfPhi}_{n}^{}$} is diagonal with
\smash{$(\widehat{\bfPhi}_{n}^{*}\widehat{\bfPhi}_{n}^{})(\gamma,\gamma)
=\norm{\bfphi_{\gamma,n}}^2$} for any \smash{$\gamma\in\widehat{\calG}$}.
As such, every \smash{$\widetilde{\bfPhi}_n$} is an isometry if and only if every \smash{$\bfphi_{\gamma,n}$} is unit norm,
as claimed.
For the remaining claims, assume that this holds.
By Lemma~\ref{lem.polyphase},
\smash{$(\widetilde{\bfPhi}_n)_{n\in\calN}$} is equivalent to \smash{$(\widehat{\bfPhi}_n)_{n\in\calN}$}
where the latter arises from \smash{$\bigparen{(\bfphi_{\gamma,n})_{n\in\calN}}_{\gamma\in\widehat{\calG}}$} in the manner of Theorem~\ref{thm.SUNTF to ECTFF} in the special case where $\bbF=\bbC$,
\smash{$\calR=\widehat{\calG}$}, \smash{$\calE_\gamma=\bbC^{\calK}$} for each $\gamma\in\widehat{\calG}$ and we identify
\smash{$\widehat{\calE}=(\bbC^\calK)^{\widehat{\calG}}$} with \smash{$\bbC^{\widehat{\calG}\times\calK}$}.
As such, the spectral coherence of \smash{$(\widetilde{\bfPhi}_n)_{n\in\calN}$} equals that of \smash{$(\widehat{\bfPhi}_n)_{n\in\calN}$} and \smash{$(\widetilde{\calF}_n)_{n\in\calN}$} is a (EC/EI) TFF for \smash{$\bbC^\calH$} if and only if \smash{$(\widehat{\calF}_n)_{n\in\calN}$} is a (EC/EI) TFF for \smash{$\bbC^{\widehat{\calG}\times\calK}$} where \smash{$\widehat{\calF}_n:=\im(\widehat{\bfPhi}_n)$} for each $n\in\calN$.
The remaining claims then follow from parts (b), (c), (e) and (g) of Theorem~\ref{thm.SUNTF to ECTFF}.
\end{proof}

\begin{example}
\label{ex.polyphase}
Continuing Example~\ref{ex.protopolyphase},
now let $\calN=\bbF_5$ and depict the synthesis filter bank of an arbitrary finite sequence $(\widetilde{\bfphi}_n)_{n\in\bbF_5}$ in $\bbC^{\bbZ_4}$ as the horizontal concatenation of the corresponding five matrices $(\widetilde{\bfPhi}_n)_{n\in\bbF_5}$ of type~\eqref{ex.protopolyphase.1}:
\begin{equation}
\label{eq.ex.polyphase.1}
\widetilde{\bfPhi}
=\left[\begin{array}{cc|cc|cc|cc|cc}
\widetilde{\bfphi}_0(0)&\widetilde{\bfphi}_0(2)&\widetilde{\bfphi}_1(0)&\widetilde{\bfphi}_1(2)&\widetilde{\bfphi}_2(0)&\widetilde{\bfphi}_2(2)&\widetilde{\bfphi}_3(0)&\widetilde{\bfphi}_3(2)&\widetilde{\bfphi}_4(0)&\widetilde{\bfphi}_4(2)\\
\widetilde{\bfphi}_0(1)&\widetilde{\bfphi}_0(3)&\widetilde{\bfphi}_1(1)&\widetilde{\bfphi}_1(3)&\widetilde{\bfphi}_2(1)&\widetilde{\bfphi}_2(3)&\widetilde{\bfphi}_3(1)&\widetilde{\bfphi}_3(3)&\widetilde{\bfphi}_4(1)&\widetilde{\bfphi}_4(3)\\
\widetilde{\bfphi}_0(2)&\widetilde{\bfphi}_0(0)&\widetilde{\bfphi}_1(2)&\widetilde{\bfphi}_1(0)&\widetilde{\bfphi}_2(2)&\widetilde{\bfphi}_2(0)&\widetilde{\bfphi}_3(2)&\widetilde{\bfphi}_3(0)&\widetilde{\bfphi}_4(2)&\widetilde{\bfphi}_4(0)\\
\widetilde{\bfphi}_0(3)&\widetilde{\bfphi}_0(1)&\widetilde{\bfphi}_1(3)&\widetilde{\bfphi}_1(1)&\widetilde{\bfphi}_2(3)&\widetilde{\bfphi}_2(1)&\widetilde{\bfphi}_3(3)&\widetilde{\bfphi}_3(1)&\widetilde{\bfphi}_4(3)&\widetilde{\bfphi}_4(1)
\end{array}\right].
\end{equation}
Identifying the identity and nonidentity members of $\widehat{\calG}$ with the square roots of unity $1$ and $-1$, respectively,
the polyphase matrix of Definition~\ref{def.polyphase} is the function that maps any $z\in\set{1,-1}$ to the synthesis matrix of the vectors $(\bfphi_{z,n})_{n\in\bbF_5}$ of~\eqref{eq.ex.protopolyphase.3}, namely
\begin{align*}
\bfPhi_{z}
&=\left[\begin{array}{ccccc}
\widetilde{\bfphi}_0(0)&\widetilde{\bfphi}_1(0)&\widetilde{\bfphi}_2(0)&\widetilde{\bfphi}_3(0)&\widetilde{\bfphi}_4(0)\\
\widetilde{\bfphi}_0(1)&\widetilde{\bfphi}_1(1)&\widetilde{\bfphi}_2(1)&\widetilde{\bfphi}_3(1)&\widetilde{\bfphi}_4(1)
\end{array}\right]
+\left[\begin{array}{ccccc}
\widetilde{\bfphi}_0(2)&\widetilde{\bfphi}_1(2)&\widetilde{\bfphi}_2(2)&\widetilde{\bfphi}_3(2)&\widetilde{\bfphi}_4(2)\\
\widetilde{\bfphi}_0(3)&\widetilde{\bfphi}_1(3)&\widetilde{\bfphi}_2(3)&\widetilde{\bfphi}_3(3)&\widetilde{\bfphi}_4(3)
\end{array}\right]z^{-1}.
\end{align*}
A careful read of Lemma~\ref{lem.polyphase} reveals that,
in general,
perfectly shuffling the columns of $\bfU^*\widetilde{\bfPhi}\bfV^\calN$ yields a block diagonal matrix with $\bfPhi_\gamma$ as its $\gamma$th diagonal block,
leading to the equivalences of Theorem~\ref{thm.filter banks}.
Here, for example,
it gives that \smash{$(\widetilde{\bfPhi}_n)_{n\in\bbF_5}$} are the isometries of an EITFF if and only if \smash{$\bfPhi_z$} is (the synthesis matrix of) an $\ETF_\bbC(2,5)$ for every $z\in\set{1,-1}$,
which, by the Gerzon bound, is not possible.

In lieu of that, Theorem~\ref{thm.filter banks} more generally gives that $(\widetilde{\bfPhi}_n)_{n\in\bbF_5}$ of this form are isometries of minimal spectral coherence if and only if each $\bfPhi_z$ has unit norm columns of minimal coherence for any $z\in\set{1,-1}$.
In general, the equivalence of Lemma~\ref{lem.polyphase} allows us to define the polyphase matrix \smash{$(\bfPhi_\gamma)_{\gamma\in\widehat{\calG}}$} arbitrarily and recover corresponding impulse responses \smash{$(\widetilde{\bfphi}_n)_{n\in\calN}$}, with~\eqref{eq.constant polyphase} holding in the special case where \smash{$\bfPhi_\gamma$} is constant over $\gamma$.
In particular, taking \smash{$\bfPhi_z$} to be the matrix of~\eqref{eq.UNTF(2,5)} for both $z=1$ and $z=-1$ yields such $(\widetilde{\bfPhi}_n)_{n\in\bbF_5}$,
and the corresponding analysis filter bank \smash{$\widetilde{\bfPhi}^*$} acts on a given $\bfy\in\bbC^{\bbZ_4}$ in a manner analogous to a windowed Fourier transform (whose windows do not overlap),
decomposing \smash{$\bfy=\bigparen{\bfy(0),\bfy(1),\bfy(2),\bfy(3)}$} into first and second halves $\bfy_0=\bigparen{\bfy(0),\bfy(1)}$ and $\bfy_2=\bigparen{\bfy(2),\bfy(3)}$,
and taking the inner products of both halves with each of the five columns of~\eqref{eq.UNTF(2,5)}.

For users confined to such a filter bank architecture and willing to trade an increase in spectral coherence for a decrease in chordal coherence, Theorem~\ref{thm.filter banks} gives that
\smash{$(\widetilde{\bfPhi}_n)_{n\in\bbF_5}$} are the isometries of an ECTFF for $\bbC^{\bbZ_4}$ if and only if $(\bfPhi_z)_{z\in\set{1,-1}}$ is a $\SUNTF_\bbC(2,5,2)$.
This can be accomplished by taking $\bfPhi_1$ and $\bfPhi_{-1}$ to be the matrices of~\eqref{eq.ex.SUNTF(2,5,2).1} and~\eqref{eq.ex.SUNTF(2,5,2).2} of Example~\ref{ex.SUNTF(2,5,2)}, respectively.
The resulting (block-diagonal) isometries $(\widehat{\bfPhi}_n)_{n\in\bbF_5}$ are given in~\eqref{eq.ex.ECTFF(4,2,5).1},
and the corresponding synthesis filter bank can be obtained by reversing the process of Example~\ref{ex.protopolyphase}:
perfectly shuffling the rows of the matrix $\widehat{\bfPhi}$ of~\eqref{eq.ex.ECTFF(4,2,5).1} gives a matrix of diagonal blocks, namely
\begin{equation*}
\bfQ\widehat{\bfPhi}
=\left[\begin{array}{rr|rr|rr|rr|rr}
\phm 1&     0\phm&     u&     0\phm&     v& 0\phm& v&     0\phm& u& 0\phm\\
     0&\phm 1\phm&     0&     v\phm&     0& u\phm& 0&     u\phm& 0& v\phm\\
     0&     0\phm&\phm U&     0\phm&\phm V& 0\phm&-V&     0\phm&-U& 0\phm\\
     0&     0\phm&     0&\phm V\phm&     0&-U\phm& 0&\phm U\phm& 0&-V\phm
\end{array}\right],
\end{equation*}
and conjugating each by $\tfrac{1}{\sqrt{G}}\bfGamma$ yields a matrix $\tfrac{1}{G}\bfGamma^\calK\bfQ\widehat{\bfPhi}(\bfGamma^\calN)^{*}$ of circulant blocks,
namely
\begin{equation*}
\tfrac{1}{2}\left[\begin{array}{rr|rr|rr|rr|rr}
\phm 2&     0\phm&u+v&u-v&v+u&v-u& v+u& v-u& u+v& u-v\\
     0&\phm 2\phm&u-v&u+v&v-u&v+u& v-u& v+u& u-v& u+v\\
     0&     0\phm&U+V&U-V&V-U&V+U&-V+U&-V-U&-U-V&-U+V\\
     0&     0\phm&U-V&U+V&V+U&V-U&-V-U&-V+U&-U+V&-U-V
\end{array}\right],
\end{equation*}
and perfectly shuffling its rows gives a matrix of form~\eqref{eq.ex.polyphase.1} where
\begin{equation*}
\widetilde{\bfphi}_0=\left[\begin{array}{c}1\\0\\0\\0\end{array}\right],\quad
\widetilde{\bfphi}_1=\tfrac14\left[\begin{array}{r}
-1\\
\mu\\
\sqrt{5}\\
\nu
\end{array}\right],\quad
\widetilde{\bfphi}_2=\tfrac14\left[\begin{array}{r}
-1\\
-\nu\\
-\sqrt{5}\\
\mu
\end{array}\right],\quad
\widetilde{\bfphi}_3=\tfrac14\left[\begin{array}{r}
-1\\
 \nu\\
-\sqrt{5}\\
-\mu
\end{array}\right],\quad
\widetilde{\bfphi}_4=\tfrac14\left[\begin{array}{r}
-1\\
-\mu\\
\sqrt{5}\\
-\nu
\end{array}\right],
\end{equation*}
provided $\mu:=2(U+V)=\sqrt{5+2\sqrt{5}}$ and $\nu:=2(U-V)=\sqrt{5-2\sqrt{5}}$.
In contrast to cases where $\bfPhi_\gamma$ is constant over $\gamma$,
we note that here, for any $n\in\bbF_5\backslash\set{0}$,
the $\calG$-translates of $\widetilde{\bfphi}_n$ have overlapping support.
\end{example}

Altogether, we see that as far as TFFs, ECTFFs and EITFFs are concerned,
complex isometries arising via Theorem~\ref{thm.SUNTF to ECTFF} are equivalent to ones whose cross-Gram matrices commute (Theorem~\ref{thm.commuting}),
and in particular are equivalent to ones that arise as filter banks in the manner of Theorem~\ref{thm.filter banks}.
We caution that, for a given finite sequence of impulse responses \smash{$(\widetilde{\bfphi}_n)$} in \smash{$\bbC^\calH$},
the latter is only true when each isometry \smash{$\widetilde{\bfPhi}_n$} is the synthesis map of \smash{$(\brT^g\widetilde{\bfphi}_n)_{g\in\calG}$},
implying the fusion Gram matrix \smash{$\widetilde{\bfPhi}^*\widetilde{\bfPhi}^{}$} has circulant blocks.
This equivalence does not more generally apply when each isometry \smash{$\widetilde{\bfPhi}_g$} is the synthesis map of \smash{$(\brT^g\widetilde{\bfphi}_n)_{n\in\calN}$},
namely when the fusion Gram matrix \smash{$\widetilde{\bfPhi}^*\widetilde{\bfPhi}^{}$} is block circulant.
Rather, the latter are characterized by the theory of~\cite{FickusIJM23}.
That theory does not apply here in general:
though any Gram matrix with circulant blocks can be perfectly shuffled into one that is block circulant,
such shuffling does not preserve the ``every diagonal block is an identity'' property needed for TFFs.
For example, whereas \smash{$(\brT^g\widetilde{\bfphi}_n)_{n\in\calN}$} is orthonormal for every $g\in\calG$ if and only if \smash{$(\widetilde{\bfphi}_n)_{n\in\calN}$} is orthonormal,
\smash{$(\brT^g\widetilde{\bfphi}_n)_{g\in\calG}$} is orthonormal for a given $n\in\calN$ if and only if the corresponding polyphase vectors of Lemma~\ref{lem.polyphase} form a unit norm sequence \smash{$(\bfphi_{\gamma,n})_{\gamma\in\widehat{\calG}}$} in \smash{$\bbC^\calK$}.
And whereas any \smash{$\TFF_\bbC(\widetilde{D},R,N)$} of Theorem~\ref{thm.commuting} necessarily has $R$ dividing \smash{$\widetilde{D}$},
the block circulant TFFs of~\cite{FickusIJM23} are not restricted in this way,
yielding \smash{$\EITFF_\bbC(5,2,5)$}, for example.
That said, both~\cite{FickusIJM23} and the theory here apply to fusion Gram matrices that are BCCB,
that is, block circulant arrays of circulant blocks,
such as those arising from harmonic frames in the manner of~\cite{King16,FickusMW25}.

\section{Stratified unit norm tight frames}

In Theorems~\ref{thm.SUNTF to ECTFF}, \ref{thm.commuting} and~\ref{thm.filter banks},
we showed that if we restrict ourselves to isometries whose cross-Gram matrices commute,
such as when each is the channel of a synthesis filter bank,
the problem of constructing an $\EITFF_\bbC(\widetilde{D},R,N)$ reduces to that of constructing an \smash{$\ETF_\bbC(D,N)$} where \smash{$D=\frac{\widetilde{D}}{R}$},
and moreover that the problem of constructing isometries with minimal spectral coherence reduces to that of constructing unit norm vectors with minimal coherence.
Both of these problems thus reduce to ones with mature literature,
and we do not discuss them further here.
Rather, we focus our remaining attention on the fact that in this same context,
the problem of constructing an $\ECTFF_\bbC(\widetilde{D},R,N)$ reduces to that of constructing a $\SUNTF_\bbC(D,N,R)$.
Recalling Definition~\ref{def.SUNTF},
the latter is a novel yet straightforward ``multilayer'' generalization of an $\ETF_\bbF(D,N)$.
We begin this section with a discussion of how standard results about ETFs,
including the Welch and Gerzon bounds, generalize to the SUNTF setting.

\begin{theorem}
\label{thm.SUNTF}
Let $\bbF$ be $\bbR$ or $\bbC$,
let $D$, $N$ and $R$ be positive integers such that $N\geq 2$,
and let $\calN$ and $\calR$ be sets of cardinality $N$ and $R$, respectively.
For each $r\in\calR$,
let $(\bfphi_{r,n})_{n\in\calN}$ be a finite sequence of unit norm vectors in some Euclidean space $\calE_r$ over $\bbF$ of dimension $D$. Then:

\begin{enumerate}
\alphi

\item
$\displaystyle
\max_{n_1\neq n_2}\tfrac{1}{R}\sum_{r\in\calR}\abs{\ip{\bfphi_{r,n_1}}{\bfphi_{r,n_2}}}^2
\geq\tfrac{N-D}{D(N-1)}$.

\noindent
Moreover, equality holds here if and only if $\bigparen{(\bfphi_{r,n})_{n\in\calN}}_{r\in\calR}$ is a SUNTF for $(\calE_r)_{r\in\calR}$.
\smallskip

\item
If $\bigparen{(\bfphi_{r,n})_{n\in\calN}}_{r\in\calR}$ is stratified with chordal constant $\varLambda<1$,
such as when it is a $\SUNTF_\bbF(D,N,R)$ with $D>1$, then
\begin{equation*}
\tfrac{N-1}{R}
\leq(D-1)+\tfrac{D(D-1)}{2}[\bbF:\bbR].
\end{equation*}
Moreover, if equality holds here then $\bigparen{(\bfphi_{r,n})_{n\in\calN}}_{r\in\calR}$ is a $\SUNTF_\bbF(D,N,R)$.
\smallskip

\item
If $\bigparen{(\bfphi_{r,n})_{n\in\calN}}_{r\in\calR}$ is a $\SUNTF_\bbF(D,N,R)$ with $D<N$ then its layerwise Naimark complement $\bigparen{(\check{\bfphi}_{r,n})_{n\in\calN}}_{r\in\calR}$ is a $\SUNTF_\bbF(N-D,N,R)$.
\smallskip

\item
If \smash{$\bigparen{(\bfphi_{r,n}^{(m)})_{n\in\calN}}_{r\in\calR_m}$} is a \smash{$\SUNTF_\bbF(D,N,R_m)$} for each member $m$ of some nonempty finite set $\calM$ then \smash{$\bigparen{(\bfphi_{(m,r),n})_{n\in\calN}}_{(m,r)\in\widehat{\calR}}$}
is a \smash{$\SUNTF_\bbF(D,N,\sum_{m\in\calM}R_m)$}
provided \smash{$\bfphi_{(m,r),n}:=\bfphi_{r,n}^{(m)}$} for any \smash{$(m,r)\in\widehat{\calR}:=\sqcup_{m\in\calM}(\set{m}\times\calR_m)$} and $n\in\calN$.
\smallskip

\item
If \smash{$(\bfphi_n)_{n\in\calN}$} is any $\UNTF_\bbF(D,N)$ then \smash{$\bigparen{(\bfphi_{\sigma(n)})_{n\in\calN}}_{\sigma\in\Sym(\calN)}$} is a \smash{$\SUNTF_\bbF(D,N,N!)$},
where \smash{$\Sym(\calN)$} is the symmetric group of all permutations of $\calN$.

\end{enumerate}
\end{theorem}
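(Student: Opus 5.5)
The plan is to reduce every part to Theorem~\ref{thm.SUNTF to ECTFF}, which turns the layers into the isometries $(\widehat{\bfPhi}_n)_{n\in\calN}$ of rank $R$ in $\widehat{\calE}=\oplus_{r\in\calR}\calE_r$, of dimension $DR$. By \eqref{eq.pf.thm.SUNTF to ECTFF.1}, $\frac1R\sum_{r\in\calR}\abs{\ip{\bfphi_{r,n_1}}{\bfphi_{r,n_2}}}^2$ is exactly the squared chordal coherence term $\frac1R\norm{\widehat{\bfPhi}_{n_1}^*\widehat{\bfPhi}_{n_2}^{}}_\Fro^2$.

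For (a), the right-hand Welch bound \eqref{eq.Welch bounds} with ambient dimension $DR$ and rank $R$ gives $\mu_\rmc^2\geq\frac{NR-DR}{DR(N-1)}=\frac{N-D}{D(N-1)}$. Equality holds if and only if the images form an ECTFF. By Theorem~\ref{thm.SUNTF to ECTFF}(g) this happens if and only if the layers form a SUNTF; the common dimension $D$ is already assumed. To be self-contained one could instead sum $\norm{\sum_n\bfphi_{r,n}\bfphi_{r,n}^*}_\Fro^2\geq N^2/D$ over $r$ and average the off-diagonal terms, but invoking the Welch bound is cleaner.

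For (b), I would use the Gerzon argument layerwise in the real space $\calS$ of self-adjoint operators on $\widehat{\calE}$ that are block diagonal. I would take traceless projections: for each $n$, let $\bfX_n:=\oplus_{r\in\calR}(\bfphi_{r,n}\bfphi_{r,n}^*-\frac1D\brI)$. These lie in a real space of dimension $R\bigl((D-1)+\frac{D(D-1)}2[\bbF:\bbR]\bigr)$. Their Frobenius Gram matrix has entries
\[
\ip{\bfX_{n_1}}{\bfX_{n_2}}_\Fro=\sum_{r\in\calR}\Bigl(\abs{\ip{\bfphi_{r,n_1}}{\bfphi_{r,n_2}}}^2-\tfrac1D\Bigr),
\]
which equals $R(1-\frac1D)$ on the diagonal and $R(\varLambda-\frac1D)$ off it. This is $R[(1-\varLambda)\brI+(\varLambda-\frac1D)\bfJ]$, with eigenvalues $R(1-\varLambda)>0$ (multiplicity $N-1$) and $R(1-\varLambda)+NR(\varLambda-\frac1D)$. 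So the rank is at least $N-1$, and this gives $N-1\leq R\bigl((D-1)+\frac{D(D-1)}2[\bbF:\bbR]\bigr)$, as desired.

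The equality case is the main obstacle. If equality holds, the $\bfX_n$ span the whole block-traceless space. Consider $\bfY:=\oplus_r\bigl(\sum_n\bfphi_{r,n}\bfphi_{r,n}^*-\frac ND\brI\bigr)=\sum_n\bfX_n$. I would aim to show that $\bfY$ is orthogonal to every $\bfX_m$, which forces $\bfY=0$, so each layer is tight. For this I would show that the $\bfX_n$ with rank exactly $N-1$ force $\sum_n\bfX_n=0$: the Gram matrix must be singular, so $\varLambda$ satisfies $(1-\varLambda)+N(\varLambda-\frac1D)=0$, making the all-ones vector lie in its kernel, whence $\norm{\sum_n\bfX_n}_\Fro^2=0$. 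If $N-1$ exceeded the rank of the Gram matrix this would be contradictory, so the rank argument needs care. For $\varLambda<1$ with $D>1$ in the SUNTF case, note that $\varLambda=\frac{N-D}{D(N-1)}<1$ since $D>1$.

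Part (c) follows from Theorem~\ref{thm.SUNTF to ECTFF}(c): the layerwise Naimark complement gives a Naimark complement of the ECTFF, which is again an ECTFF. Then (g) shows that the complementary layers, of dimension $N-D$, form a SUNTF. Alternatively, the Gram identity $\check{\bfPhi}_r^*\check{\bfPhi}_r=\frac N{N-D}(\brI-\frac DN\bfPhi_r^*\bfPhi_r)$ directly scales each off-diagonal modulus squared by $(\frac D{N-D})^2$, which preserves stratification.

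Part (d) is immediate. Each layer is a $\UNTF_\bbF(D,N)$, and the sum over $(m,r)$ of off-diagonal squared moduli is $\sum_m R_m\varLambda_m$, which is constant. For (e), each layer is a permuted UNTF and so a UNTF. For distinct $n_1,n_2$, the average over $\sigma$ of $\abs{\ip{\bfphi_{\sigma(n_1)}}{\bfphi_{\sigma(n_2)}}}^2$ is the average over all ordered pairs of distinct indices, which is independent of $(n_1,n_2)$.
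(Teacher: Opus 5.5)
Your proposal is correct and follows the paper's proof essentially part by part. Part (a) goes through Theorem~\ref{thm.SUNTF to ECTFF} and the chordal Welch bound. Part (b) uses the same block-traceless Gerzon argument with Gram matrix $R[(1-\varLambda)\brI+(\varLambda-\tfrac1D)\brJ]$. For (c) you take the Naimark-complement route that the paper gives as its alternative proof, and your direct observation that the Gram identity preserves stratification also suffices. Parts (d) and (e) use the same direct computations.

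The only local difference is the equality case of (b). There you get tightness directly: with $\bfG$ the Frobenius Gram matrix of $(\bfX_n)_{n\in\calN}$ and $\bfchi$ the all-ones vector, $\bfG\bfchi=\bfzero$ gives $\norm{\sum_{n\in\calN}\bfX_n}_\Fro^2=\bfchi^*\bfG\bfchi=0$. The paper instead identifies singularity of $\bfG$ with $\varLambda=\tfrac{N-D}{D(N-1)}$ and invokes (a). Your argument is complete as it stands. The orthogonality detour and your hedge that ``the rank argument needs care'' are unnecessary, since $N-1\leq\rank(\bfG)\leq R\bigl((D-1)+\tfrac{D(D-1)}{2}[\bbF:\bbR]\bigr)=N-1$ already forces $\bfG$ to be singular.
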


\begin{proof}[Proof.]
(a)
Construct isometries $(\widehat{\bfPhi}_n)_{n\in\calN}$ into $\widehat{\calE}:=\oplus_{r\in\calR}\calE_r$ from $\bigparen{(\bfphi_{r,n})_{n\in\calN}}_{r\in\calR}$ in the manner of Theorem~\ref{thm.SUNTF to ECTFF}.
Here,
\smash{$\widehat{D}
:=\dim(\widehat{\calE})
=\dim(\oplus_{r\in\calR}\calE_r)
=\sum_{r\in\calR}\dim(\calE_r)
=DR$} and
\begin{equation*}
\tfrac{1}{R}\norm{\widehat{\bfPhi}_{n_1}^{*}\widehat{\bfPhi}_{n_2}^{}}_\Fro^2
=\tfrac{1}{R}\sum_{r_1\in\calR}\sum_{r_2\in\calR}
\abs{(\widehat{\bfPhi}_{n_1}^{*}\widehat{\bfPhi}_{n_2}^{})(r_1,r_2)}^2
=\tfrac{1}{R}\sum_{r\in\calR}\abs{\ip{\bfphi_{r,n_1}}{\bfphi_{r,n_2}}}^2
\end{equation*}
for any $n_1,n_2\in\calN$.
As such, the chordal Welch bound, given in~\eqref{eq.Welch bounds}, implies
\begin{equation*}
\max_{n_1\neq n_2}\tfrac{1}{R}\sum_{r\in\calR}\abs{\ip{\bfphi_{r,n_1}}{\bfphi_{r,n_2}}}^2
=\max_{n_1\neq n_2}\tfrac{1}{R}\norm{\widehat{\bfPhi}_{n_1}^{*}\widehat{\bfPhi}_{n_2}^{}}_\Fro^2
\geq\tfrac{NR-\widehat{D}}{\widehat{D}(N-1)}
=\tfrac{NR-DR}{DR(N-1)}
=\tfrac{N-D}{D(N-1)}.
\end{equation*}
Moreover, equality holds above if and only if it holds in the chordal Welch bound,
namely if and only if the images of $(\widehat{\bfPhi}_n)_{n\in\calN}$ form an ECTFF for $\widehat{\calE}$.
By Theorem~\ref{thm.SUNTF to ECTFF}(g), this occurs if and only if $\bigparen{(\bfphi_{r,n})_{n\in\calN}}_{r\in\calR}$ is a SUNTF for $(\calE_r)_{r\in\calR}$.

(b)
For any $r\in\calR$ and $n\in\calN$ we have
\smash{$\Tr(\bfphi_{r,n}^{}\bfphi_{r,n}^{*}-\tfrac1D\brI)
=\norm{\bfphi_{r,n}}^2-1
=0$} and so
\smash{$\bfphi_{r,n}^{}\bfphi_{r,n}^{*}-\tfrac1D\brI$} belongs to the real Euclidean space
$\calS_r:=\set{\bfL:\calE_r\rightarrow\calE_r \mid \bfL\text{ is linear},\, \bfL^*=\bfL,\, \Tr(\bfL)=0}$.
Since $\calE_r$ is a Euclidean space over $\bbF$ of dimension $D$,
$\calS_r$ has (real) dimension
$\dim(\calS_r)=(D-1)+\tfrac{D(D-1)}{2}[\bbF:\bbR]$.
For each $n\in\calN$,
the vector \smash{$\bfpsi_n:=\frac1{\sqrt{R}}\oplus_{r\in\calR}(\bfphi_{r,n}^{}\bfphi_{r,n}^{*}-\tfrac1D\brI)$} thus belongs to the space $\oplus_{r\in\calR}\calS_r$ of dimension \smash{$R\bigparen{(D-1)+\tfrac{D(D-1)}{2}[\bbF:\bbR]}$}.
This dimension is thus an upper bound on the rank of the Gram matrix $\bfPsi^*\bfPsi$ of $(\bfpsi_n)_{n\in\calN}$.
To compute the latter,
note that for any $n_1,n_2\in\calN$,
\begin{align*}
\ip{\bfpsi_{n_1}}{\bfpsi_{n_2}}
&=\tfrac{1}{R}\sum_{r\in\calR}\bigip{\bfphi_{r,n_1}^{}\bfphi_{r,n_1}^{*}-\tfrac1D\brI}{\bfphi_{r,n_2}^{}\bfphi_{r,n_2}^{*}-\tfrac1D\brI}_\Fro\\
&=\tfrac{1}{R}\sum_{r\in\calR}\Tr\Bigparen{\bigparen{\bfphi_{r,n_1}^{}\bfphi_{r,n_1}^{*}-\tfrac1D\brI}\bigparen{\bfphi_{r,n_2}^{}\bfphi_{r,n_2}^{*}-\tfrac1D\brI}}\\
&=\tfrac{1}{R}\sum_{r\in\calR}\Bigparen{\Tr\bigparen{\bfphi_{r,n_2}^{*}\bfphi_{r,n_1}^{}\bfphi_{r,n_1}^{*}\bfphi_{r,n_2}^{}}
-\tfrac1D\Tr\bigparen{\bfphi_{r,n_1}^{*}\bfphi_{r,n_1}^{}}-\tfrac1D\Tr\bigparen{\bfphi_{r,n_2}^{*}\bfphi_{r,n_2}^{}}+\tfrac1{D^2}\Tr(\brI)}\\
&=\tfrac{1}{R}\sum_{r\in\calR}\abs{\ip{\bfphi_{r,n_1}}{\bfphi_{r,n_2}}}^2-\tfrac1{D}.
\end{align*}
Since \smash{$\bigparen{(\bfphi_{r,n})_{n\in\calN}}_{r\in\calR}$} is stratified with chordal constant $\varLambda$,
this implies
$\bfPsi^*\bfPsi
=(1-\varLambda)\brI+(\varLambda-\tfrac{1}{D})\brJ$
where $\brJ\in\bbF^{\calN\times\calN}$ is an all-ones matrix.
The spectrum of $\bfPsi^*\bfPsi$ thus consists of \smash{$(1-\varLambda)+N(\varLambda-\tfrac{1}{D})=\varLambda(N-1)-\tfrac{N-D}{D}$}
and $N-1$ copies of $(1-\varLambda)$.
Now recall that $\varLambda<1$ by assumption,
such as when \smash{$\bigparen{(\bfphi_{r,n})_{n\in\calN}}_{r\in\calR}$} is a $\SUNTF_\bbF(D,N,R)$ with $D>1$.
As such, $1-\varLambda>0$ and so $\rank(\bfPsi^*\bfPsi)\in\set{N-1,N}$ where $\rank(\bfPsi^*\bfPsi)=N-1$ if and only if \smash{$\varLambda=\tfrac{N-D}{D(N-1)}$}.
By (a), the latter occurs if and only if \smash{$\bigparen{(\bfphi_{r,n})_{n\in\calN}}_{r\in\calR}$} is a SUNTF for $(\calE_r)_{r\in\calR}$,
that is, a $\SUNTF_\bbF(D,N,R)$.
Altogether,
\begin{equation*}
N-1
\leq\rank(\bfPsi^*\bfPsi)
\leq\dim(\oplus_{r\in\calR}\calS_r)
=R\bigparen{(D-1)+\tfrac{D(D-1)}{2}[\bbF:\bbR]},
\end{equation*}
where equality holds throughout only if \smash{$\bigparen{(\bfphi_{r,n})_{n\in\calN}}_{r\in\calR}$} is a $\SUNTF_\bbF(D,N,R)$.

(c)
For a direct proof,
for each $r\in\calR$,
let \smash{$(\check{\bfphi}_{r,n})_{n\in\calN}$} be a Naimark complement of \smash{$(\bfphi_{r,n})_{n\in\calN}$},
namely a UNTF for some space $\check{\calE}_r$ of dimension $N-D$ that satisfies
$\ip{\check{\bfphi}_{r,n_1}}{\check{\bfphi}_{r,n_2}}=-\tfrac{D}{N-D}\ip{\bfphi_{r,n_1}}{\bfphi_{r,n_2}}$ for any distinct $n_1,n_2\in\calN$.
Here, since \smash{$\bigparen{(\bfphi_{r,n})_{n\in\calN}}_{r\in\calR}$} is a $\SUNTF_\bbF(D,N,R)$,
(a) gives
\begin{equation*}
\tfrac{1}{R}\sum_{r\in\calR}\abs{\ip{\check{\bfphi}_{r,n_1}}{\check{\bfphi}_{r,n_2}}}^2
=\bigparen{\tfrac{D}{N-D}}^2\tfrac{1}{R}\sum_{r\in\calR}\abs{\ip{\bfphi_{r,n_1}}{\bfphi_{r,n_2}}}^2
=\bigparen{\tfrac{D}{N-D}}^2\tfrac{N-D}{D(N-1)}
=\tfrac{D}{(N-D)(N-1)}
\end{equation*}
for any distinct $n_1,n_2\in\calN$,
implying by (a) that \smash{$\bigparen{(\check{\bfphi}_{r,n})_{n\in\calN}}_{r\in\calR}$} is a $\SUNTF_\bbF(N-D,N,R)$.
For an alternative proof, combine (c) and (g) of Theorem~\ref{thm.SUNTF to ECTFF} with the fact that the Naimark complement of an $\ECTFF_\bbF(DR,R,N)$ with $NR>DR$ is an $\ECTFF_\bbF((N-D)R,R,N)$.
Here, Theorem~\ref{thm.SUNTF to ECTFF}(c) moreover gives that the finite sequence of isometries
\smash{$(\widehat{\check{\bfPhi}}_n)_{n\in\calN}$} constructed from the layerwise Naimark complement  \smash{$\bigparen{(\check{\bfphi}_{r,n})_{n\in\calN}}_{r\in\calR}$} of \smash{$\bigparen{(\bfphi_{r,n})_{n\in\calN}}_{r\in\calR}$} in the manner of Theorem~\ref{thm.SUNTF to ECTFF} is a Naimark complement of the finite sequence \smash{$(\widehat{\bfPhi}_n)_{n\in\calN}$} that arises from the latter in this same manner.

(d)
If each $\bigparen{(\bfphi_{r,n}^{(m)})_{n\in\calN}}_{r\in\calR_m}$ is stratified,
then letting $\varLambda_m$ be its chordal constant,
note that
\begin{equation*}
\sum_{m\in\calM}\sum_{r\in\calR_m}\abs{\ip{\bfphi_{(m,r),n_1}}{\bfphi_{(m,r),n_2}}}^2
=\sum_{m\in\calM}\sum_{r\in\calR_m}\abs{\ip{\bfphi_{r,n_1}^{(m)}}{\bfphi_{r,n_2}^{(m)}}}^2
=\sum_{m\in\calM}\#(\calR_m)\varLambda_m
\end{equation*}
for any distinct $n_1,n_2\in\calN$, and so
\smash{$\bigparen{(\bfphi_{(m,r),n})_{n\in\calN}}_{(m,r)\in\widehat{\calR}}$} is stratified.
Thus, if each $\bigparen{(\bfphi_{r,n}^{(m)})_{n\in\calN}}_{r\in\calR_m}$ is a $\SUNTF_\bbF(D,N,R_m)$,
each $(\bfphi_{(m,r),n})_{n\in\calN}$ is a $\UNTF_\bbF(D,N)$,
and so \smash{$\bigparen{(\bfphi_{(m,r),n})_{n\in\calN}}_{(m,r)\in\widehat{\calR}}$} is a \smash{$\SUNTF_\bbF(D,N,\sum_{m\in\calM}R_m)$}.

(e)
Since \smash{$(\bfphi_n)_{n\in\calN}$} is any $\UNTF_\bbF(D,N)$,
every layer of \smash{$\bigparen{(\bfphi_{\sigma(n)})_{n\in\calN}}_{\sigma\in\Sym(\calN)}$} is a $\UNTF_\bbF(D,N)$.
To see that they are stratified, let $\bfPhi$ be the synthesis matrix of \smash{$(\bfphi_n)_{n\in\calN}$} and note that for any distinct $n_1,n_2\in\calN$ and any distinct $\hat{n}_1,\hat{n}_2\in\calN$,
there are exactly $(N-2)!$ permutations $\sigma$ of $\calN$ such that
$\bigparen{\sigma(n_1),\sigma(n_2)}=(\hat{n}_1,\hat{n}_2)$ and so
\begin{align*}
\tfrac1{\#(\Sym(\calN))}\sum_{\sigma\in\Sym(\calN)}\abs{\ip{\bfphi_{\sigma(n_1)}}{\bfphi_{\sigma(n_2)}}}^2
&=\tfrac{(N-2)!}{N!}\sum_{\hat{n}_1\in\calN}\sum_{\hat{n}_2\in\calN\backslash\set{\hat{n}_1}}
\abs{\ip{\bfphi_{\hat{n}_1}}{\bfphi_{\hat{n}_2}}}^2\\
&=\tfrac{1}{N(N-1)}\sum_{\hat{n}_1\in\calN}\Bigparen{\,\sum_{\hat{n}_2\in\calN}
\abs{\ip{\bfphi_{\hat{n}_1}}{\bfphi_{\hat{n}_2}}}^2-1}\\
&=\tfrac{1}{N(N-1)}\Bigparen{\,\sum_{\hat{n}_2\in\calN}\norm{\bfPhi^*\bfphi_{\hat{n}_2}}^2-N}.
\end{align*}
Since
\smash{$\norm{\bfPhi^*\bfphi_{\hat{n}_2}}^2
=\ip{\bfphi_{\hat{n}_2}}{\bfPhi\bfPhi^*\bfphi_{\hat{n}_2}}
=\tfrac{N}{D}\norm{\bfphi_{\hat{n}_2}}^2
=\tfrac{N}{D}$}
for any $\hat{n}_2\in\calN$,
this reduces to
\smash{$\tfrac{N-D}{D(N-1)}$}
for any distinct $n_1,n_2\in\calN$.
\end{proof}

\begin{remark}
When $\bbF=\bbR$, $D=2$ and $R=2$,
the \textit{stratified Gerzon bound} given in Theorem~\ref{thm.SUNTF}(b) gives that
$N\leq 5$.
Notably, this bound is achieved by the $\SUNTF_\bbR(2,5,2)$ of Example~\ref{ex.SUNTF(2,5,2)}.
This suggests that this bound is not overly coarse.
In contrast, directly applying the chordal Gerzon bound~\eqref{eq.Gerzon} to the $\ECTFF_\bbF(DR,R,N)$ that arises from a $\SUNTF_\bbF(D,N,R)$ with $D>1$ via Theorem~\ref{thm.SUNTF to ECTFF} only yields the weaker bound
$N\leq DR+\tfrac{DR(DR-1)}{2}[\bbF:\bbR]$,
which for example only gives $N\leq 10$ when $\bbF=\bbR$, $D=2$ and $R=2$.
For that matter,
if one mimics the standard proof of~\eqref{eq.Gerzon} (given in the appendix) too closely,
and instead computes the Gram matrix of
\smash{$\bigparen{\oplus_{r\in\calR}\bfphi_{r,n}^{}\bfphi_{r,n}^{*}}_{n\in\calN}$}
to show that it is a linearly independent finite sequence in the direct sum over $r\in\calR$ of the real spaces of self-adjoint operators on $\calE_r$,
one only obtains the weaker bound $N\leq DR+\tfrac{DR(D-1)}{2}[\bbF:\bbR]$,
which for example only gives $N\leq 6$ when $\bbF=\bbR$, $D=2$ and $R=2$.
We further note that in the special case where $R=1$,
the careful argument of Theorem~\ref{thm.SUNTF}(b) recovers a result from the literature surrounding Zauner's conjecture, namely that any $D^2$ noncollinear equiangular vectors in $\bbC^D$ necessarily form an $\ETF_\bbC(D,D^2)$.
\end{remark}

In regard to Theorem~\ref{thm.SUNTF}(c),
our only additional remark is that if $(\calF_n)_{n\in\calN}$ is an $\ECTFF_\bbF(D,R,N)$ for $\calE$ with $R<D$ then its \textit{spatial complement}~\cite{CasazzaFMYZ11,FickusMW25} $(\calF_n^\perp)_{n\in\calN}$ is an $\ECTFF_\bbF(D,D-R,N)$ for $\calE$,
but the latter need not arise from a SUNTF via Theorem~\ref{thm.SUNTF to ECTFF} even when the former does.
For example, the unit-norm vertices of a regular tetrahedron centered at the origin form an $\ETF_\bbR(3,4)$,
namely a $\SUNTF_\bbR(3,4,1)$,
which yields an $\EITFF_\bbR(3,1,4)$ via Theorem~\ref{thm.SUNTF to ECTFF}.
Its spatial complement is an $\ECTFF_\bbR(3,2,4)$,
which does not arise from a SUNTF since $2$ does not divide $3$.

Much of the remaining work below is inspired by parts (b), (d) and (e) of Theorem~\ref{thm.SUNTF}.
In particular, given $\bbF$ to be either $\bbR$ or $\bbC$,
and integers $N\geq D> 1$,
consider the set of positive integers $R$ for which a $\SUNTF_\bbF(D,N,R)$ exists.
By Theorem~\ref{thm.SUNTF}(e) this set contains $N!$ and so is nonempty.
Theorem~\ref{thm.SUNTF}(d) gives that this set is closed under linear combinations whose coefficients are nonnegative integers, not identically zero.
Moreover, whereas the traditional Gerzon bound precludes the existence of $\ETF_\bbF(D,N)$ for a fixed $D>1$ and sufficiently large $N$,
the necessary condition of Theorem~\ref{thm.SUNTF}(b) can be satisfied for any $N\geq D>1$ provided $R$ is taken sufficiently large.
This suggests the following problem:

\begin{problem}
\label{prob.SUNTF}
Given $\bbF$ to be either $\bbR$ or $\bbC$ and integers $N\geq D>1$,
does there exist a positive integer $R_\bbF(D,N)$ such that a $\SUNTF_\bbF(D,N,R)$ exists for any integer $R\geq R_\bbF(D,N)$?
\end{problem}

To be clear, the answer to Problem~\ref{prob.SUNTF} is clearly ``yes, with $R_\bbF(D,N)=1$'' if and only if an $\ETF_\bbF(D,N)$ exists.
From this perspective, Problem~\ref{prob.SUNTF} is a generalization of the ETF existence problem.
Our geometric intuition of the algebraic variety of all $\UNTF_\bbF(D,N)$,
based on the results of~\cite{BenedettoF03,DykemaS06,CahillFMPS13,CahillMS17,NeedhamS23} for example,
leads us to suspect that the answer is ``yes'' for any such $\bbF$, $D$ and $N$,
and that with the exception of the degenerate cases with $N\in\set{D,D+1}$,
there is enough ``freedom'' in each layer that any stack of sufficiently many of them can be made flat.
Later on in this section, we prove that the answer is indeed ``yes'' when $\bbF=\bbC$ and $N$ is a prime power.
Our approach relies on Fourier analysis and Gauss sums over finite fields.
The following example illustrates both this problem and this approach.

\begin{example}
\label{ex.SUNTF(2,5,5)}
Recall the $\SUNTF_\bbR(2,5,2)$ of Example~\ref{ex.SUNTF(2,5,2)} which,
via Theorem~\ref{thm.SUNTF to ECTFF},
yields the $\ECTFF_\bbR(4,2,5)$ of Example~\ref{ex.ECTFF(4,2,5)}.
For any positive integer $M$,
applying Theorem~\ref{thm.SUNTF}(d) to $M$ copies of this $\SUNTF_\bbR(2,5,2)$ yields a $\SUNTF_\bbR(2,5,2M)$.
(Laying $M$ two-layer ``flat stacks'' on top of each other yields a single such stack with $2M$ layers.)
The interested reader may verify that the $\ECTFF_\bbR(4M,2M,5)$ that arises from this $\SUNTF_\bbR(2,5,2M)$ via Theorem~\ref{thm.SUNTF to ECTFF} is the direct sum of $M$ copies of this $\ECTFF_\bbR(4,2,5)$ in the manner of Lemma~\ref{lem.FF direct sums}.
It follows that when $\bbF=\bbR$, $D=2$ and $N=5$,
the answer to Problem~\ref{prob.SUNTF} is ``yes'' if and only if there exists a $\SUNTF_\bbR(2,5,R)$ for some odd $R$.
Here, Theorem~\ref{thm.SUNTF}(e) does not help as $5!=120$ is even.

Now let $u:=\sqrt{\frac{3}{8}}$, $v:=\sqrt{\frac{5}{8}}$,
and consider the following UNTF $(\bfphi_n)_{n\in\bbF_5}$ for $\bbR^2$:
\begin{equation*}
\bfPhi=\left[\begin{array}{rrrrr}
1&\phm u&\phm u&\phm u&\phm u\\
0&\phm v&    -v&    -v&\phm v
\end{array}\right],
\quad
\bfPhi\bfPhi^*
=\left[\begin{array}{cc}\frac{5}{2}&0\smallskip\\0&\frac{5}{2}\end{array}\right],
\quad
\abs{\bfPhi^*\bfPhi}^2
=\left[\begin{array}{ccccc}
          1& \frac{3}{8}& \frac{3}{8}& \frac{3}{8}& \frac{3}{8}\smallskip\\
\frac{3}{8}&           1&\frac{1}{16}&\frac{1}{16}&           1\smallskip\\
\frac{3}{8}&\frac{1}{16}&           1&           1&\frac{1}{16}\smallskip\\
\frac{3}{8}&\frac{1}{16}&           1&           1&\frac{1}{16}\smallskip\\
\frac{3}{8}&           1&\frac{1}{16}&\frac{1}{16}&           1
\end{array}\right].
\end{equation*}
Notably, the entries of any cyclic subdiagonal of \smash{$\abs{\bfPhi^*\bfPhi}^2$} average to the squared Welch bound,
that is,
\smash{$\tfrac15(2\tfrac{3}{8}+2\tfrac{1}{16}+1)
=\tfrac{3}{8}
=\tfrac{5-2}{2(5-1)}$}.
As such, cycling the columns of $\bfPhi$ yields a \smash{$\SUNTF_\bbR(2,5,5)$},
namely \smash{$\bigparen{(\bfphi_{r+n})_{n\in\bbF_5}}_{r\in\bbF_5}$}.
Here, the synthesis matrices of the layers are
\begin{align*}
\bfPhi_0&=\left[\begin{array}{rrrrr}
\phm 1&\phm u&\phm u&\phm u&\phm u\\
\phm 0&\phm v&    -v&    -v&\phm v
\end{array}\right],&
\bfPhi_1&=\left[\begin{array}{rrrrr}
\phm u&\phm u&\phm u&\phm u&\phm 1\\
\phm v&    -v&    -v&\phm v&\phm 0
\end{array}\right],&
\bfPhi_2&=\left[\begin{array}{rrrrr}
\phm u&\phm u&\phm u&\phm 1&\phm u\\
    -v&    -v&\phm v&\phm 0&\phm v
\end{array}\right],\\
\bfPhi_3&=\left[\begin{array}{rrrrr}
\phm u&\phm u&\phm 1&\phm u&\phm u\\
    -v&\phm v&\phm 0&\phm v&    -v
\end{array}\right],&
\bfPhi_4&=\left[\begin{array}{rrrrr}
\phm u&\phm 1&\phm u&\phm u&\phm u\\
\phm v&\phm 0&\phm v&    -v&    -v
\end{array}\right].
\end{align*}
As such, Theorem~\ref{thm.SUNTF}(d) implies that a $\SUNTF_\bbR(2,5,R)$ exists for any positive integer $R$ that is a nonnegative integer linear combination of $2$ and $5$.
This is a resolved instance of the \textit{Frobenius coin problem},
and so here Problem~\ref{prob.SUNTF} is answered in the affirmative with $R_\bbR(2,5)=4$.
We do not yet know whether a $\SUNTF_\bbR(2,5,3)$ exists, and we leave this for future research.
This example does highlight one approach to Problem~\ref{prob.SUNTF}:
for a given $\bbF$, $D$ and $N$,
its answer is ``yes'' if and only if there exist a $\SUNTF_\bbF(D,N,R_1)$ and a $\SUNTF_\bbF(D,N,R_2)$ where $R_1$ and $R_2$ are relatively prime.
\end{example}

\subsection{Harmonic stratified unit norm tight frames}

For any subset $\calD$ of $\calG$ of cardinality $D>0$,
the corresponding \textit{harmonic frame} is the finite sequence $(\bfphi_\gamma)_{\gamma\in\widehat{\calG}}$, \smash{$\bfphi_\gamma(d):=D^{-\frac12}\gamma(d)$}.
It is a UNTF for \smash{$\bbC^\calD$} since $\bfGamma$ is a complex Hadamard matrix.
For any \smash{$\gamma_1,\gamma_2\in\widehat{\calG}$},
the corresponding entry of its Gram matrix is naturally expressed in terms of the discrete Fourier transform of the characteristic function $\bfchi_\calD$ of $\calD$:
\begin{equation}
\label{eq.harmonic Gram}
\ip{\bfphi_{\gamma_1}}{\bfphi_{\gamma_2}}
=\tfrac{1}{D}\sum_{d\in\calD}\overline{\gamma_1(d)}\gamma_2(d)
=\tfrac{1}{D}\sum_{g\in\calG}\overline{(\gamma_1^{}\gamma_2^{-1})(g)}\bfchi_\calD(g)
=\tfrac{1}{D}(\bfGamma^*\bfchi_\calD)(\gamma_1^{}\gamma_2^{-1}).
\end{equation}
That is, $\bfPhi^*\bfPhi$ is the $\widehat{\calG}$-circulant matrix whose impulse response is $\frac1D\bfGamma^*\bfchi_\calD$.
In particular, $\bfPhi^*\bfPhi$ is real-valued if and only if $\bfGamma^*\bfchi_\calD$ is real-valued,
namely if and only if $\bfchi_\calD$ equals its conjugate reversal $\bfchi_\calD^{\star}$.
Thus, a harmonic frame is (unitarily equivalent to) a UNTF with a real-valued synthesis matrix if and only if $\calD$ is \textit{reversible}, namely $\calD=-\calD:=\set{-g : g\in\calD}$.
When this occurs,
we can use the following process to convert the (complex) $\calD\times\widehat{\calG}$ synthesis matrix $\bfPhi$ for the corresponding harmonic frame into a unitarily equivalent real-valued one.
First, note that for any $g\in\calD$ for which $-g=g$,
the corresponding row of $\bfPhi$ is already real-valued since
\smash{$\overline{\gamma(g)}
=\gamma(-g)
=\gamma(g)$}
for each $\gamma\in\widehat{\calG}$.
(We caution that $\calD$ may contain many such $g$,
such as when $\calG$ is an elementary abelian \mbox{$2$-group}.)
When $\calD$ is reversible,
$\set{g\in\calD: -g\neq g}$ can be partitioned into opposite pairs and since $\gamma(-g)=\overline{\gamma(g)}$ for each $\gamma\in\widehat{\calG}$,
the corresponding pair of rows of $\bfPhi$ can be unitarily transformed into a pair of real-valued rows as follows:
\begin{equation}
\label{eq.conjugate pairs}
\tfrac1{\sqrt{2}}\left[\begin{array}{rr}1&1\\-\rmi&\rmi\end{array}\right]
\left[\begin{array}{l}\bfphi_\gamma(g)\\\bfphi_\gamma(-g)\end{array}\right]
=\tfrac1{\sqrt{2D}}\left[\begin{array}{rr}1&1\\-\rmi&\rmi\end{array}\right]
\left[\begin{array}{l}\gamma(g)\smallskip\\\overline{\gamma(g)}\end{array}\right]
=\sqrt{2}\left[\begin{array}{c}\real(\bfphi_\gamma(g))\\\imag(\bfphi_\gamma(g)) \end{array}\right].
\end{equation}

In general, since $\bfGamma^*$ transforms convolution and conjugate reversal into pointwise multiplication and conjugation, respectively,
the squared modulus of \eqref{eq.harmonic Gram} is naturally expressed in terms of the \textit{autocorrelation} \smash{$\bfchi_\calD^{}*\bfchi_\calD^{\star}$} of \smash{$\bfchi_\calD$}:
\begin{equation}
\label{eq.DFT of char mod squared}
\abs{\ip{\bfphi_{\gamma_1}}{\bfphi_{\gamma_2}}}^2
=\tfrac{1}{D^2}\abs{(\bfGamma^*\bfchi_\calD)(\gamma_1^{}\gamma_2^{-1})}^2
=\tfrac1{D^2}\bigparen{\bfGamma^*(\bfchi_\calD^{}*\bfchi_\calD^{\star})}(\gamma_1^{}\gamma_2^{-1}).
\end{equation}
For each $g\in\calG$,
the corresponding value of this autocorrelation gives the number of ways $g$ can be expressed as a difference of members of $\calD$:
\begin{equation}
\label{eq.auto of char}
(\bfchi_\calD^{}*\bfchi_\calD^{\star})(g)
=\sum_{\substack{g_1,g_2\in\calG\\g_1+g_2=g}}\bfchi_\calD(g_1)\overline{\bfchi_\calD(-g_2)}
=\#\set{(d_1,d_2)\in\calD\times\calD: g=d_1-d_2}.
\end{equation}
These same values appear in the classical algebro-combinatorial concept of a difference set and its various generalizations.
Specifically, $\calD$ is a \textit{difference set} for $\calG$ if there exists $C\geq0$ such that $\#\set{(d_1,d_2)\in\calD\times\calD: g=d_1-d_2}=C$ for any $g\in\calG\backslash\set{0}$.
More generally, a finite sequence $(\calD_r)_{r\in\calR}$ of $R=\#(\calR)$ subsets of $\calG$ of equal cardinality $D>0$ is a \textit{difference family} for $\calG$ if there exists $C\geq0$ such that
$\sum_{r\in\calR}\#\set{(d_1,d_2)\in\calD_r\times\calD_r: g=d_1-d_2}=C$
for any $g\in\calG\backslash\set{0}$.
For example, $(\set{1,4},\set{2,3})$ is a difference family for $\bbZ_5$ with $R=2$, $D=2$ and $C=1$ since each of the four nonzero elements $\set{1,2,3,4}$ of $\bbZ_5$ appears exactly once in the combined difference tables of $\set{1,4}$ and $\set{2,3}$:
\begin{equation}
\label{eq.DF(5,2,1)}
\begin{array}{c|cc}
-&1&4\\\hline
1&0&2\\
4&3&0
\end{array}\,,
\quad
\begin{array}{c|cc}
-&2&3\\\hline
2&0&4\\
3&1&0
\end{array}\,.
\end{equation}
Here in general,
since a finite group of order $G>1$ has $G-1$ nonidentity elements whereas for each $r\in\calR$,
there are $D(D-1)$ pairs $(d_1,d_2)\in\calD_r\times\calD_r$ with $d_1\neq d_2$,
we necessarily have \smash{$C=\frac{RD(D-1)}{G-1}$}.
For instance,  \smash{$C=\frac{2(2)(2-1)}{(5-1)}=1$} in the prior example.
In summary, a finite sequence $(\calD_r)_{r\in\calR}$ of $R>0$ subsets of a finite abelian group $\calG$ of order $G>1$, each of cardinality $D>0$, is a difference family for $\calG$ if and only if
\begin{equation}
\label{eq.def of difference family}
\sum_{r\in\calR}\#\set{(d_1,d_2)\in\calD_r\times\calD_r: g=d_1-d_2}
=\tfrac{RD(D-1)}{G-1},
\quad\forall g\in\calG\backslash\set{0}.
\end{equation}
We now show that difference families equate to SUNTFs whose layers are harmonic frames:

\begin{theorem}
\label{thm.DF to SUNTF}
Let $\calG$ be a finite abelian group of order $G>1$,
let $D$ and $R$ be positive integers,
and let $\calR$ be a finite set of cardinality $R$.
For each $r\in\calR$,
let $\calD_r$ be a subset of $\calG$ of cardinality $D$,
and let \smash{$(\bfphi_{r,\gamma})_{\gamma\in\widehat{\calG}}$} be the corresponding harmonic frame for $\bbC^{\calD_r}$,
\smash{$\bfphi_{r,\gamma}(d):=D^{-\frac12}\bfgamma(d)$}.
Then \smash{$\bigparen{(\bfphi_{r,\gamma})_{\gamma\in\widehat{\calG}}}_{r\in\calR}$} is a \smash{$\SUNTF_\bbC(D,G,R)$} if and only if \smash{$(\calD_r)_{r\in\calR}$} is a difference family for $\calG$.
When this occurs and each \smash{$\calD_r$} is reversible, a \smash{$\SUNTF_\bbR(D,G,R)$} exists.
\end{theorem}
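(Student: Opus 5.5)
Each layer $(\bfphi_{r,\gamma})_{\gamma\in\widehat{\calG}}$ is already a $\UNTF_\bbC(D,G)$ for $\bbC^{\calD_r}$, since $\bfGamma$ is a complex Hadamard matrix; this was noted just before~\eqref{eq.harmonic Gram}. All the spaces $\bbC^{\calD_r}$ have dimension $D$. By Definition~\ref{def.SUNTF}, the collection is therefore a $\SUNTF_\bbC(D,G,R)$ if and only if it is stratified. So the plan is to show that stratification is equivalent to~\eqref{eq.def of difference family}.

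Fix distinct $\gamma_1,\gamma_2\in\widehat{\calG}$ and set $\gamma:=\gamma_1\gamma_2^{-1}\neq1$. Summing~\eqref{eq.DFT of char mod squared} over $r$ and using~\eqref{eq.auto of char} gives
\begin{equation*}
\sum_{r\in\calR}\abs{\ip{\bfphi_{r,\gamma_1}}{\bfphi_{r,\gamma_2}}}^2
=\tfrac1{D^2}(\bfGamma^*\bfa)(\gamma),
\qquad
\bfa:=\sum_{r\in\calR}\bfchi_{\calD_r}^{}*\bfchi_{\calD_r}^{\star},
\end{equation*}
where $\bfa(g)$ counts the pairs $(d_1,d_2)\in\calD_r\times\calD_r$ with $d_1-d_2=g$, summed over $r$. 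This expression depends on $(\gamma_1,\gamma_2)$ only through $\gamma$, and every $\gamma\neq1$ arises this way. Hence stratification is equivalent to $\bfGamma^*\bfa$ being constant on $\widehat{\calG}\backslash\set{1}$.

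Next I would show that this is equivalent to $\bfa$ being constant on $\calG\backslash\set{0}$. Write $\bfGamma^*\bfa=c\bfchi_{\widehat{\calG}}+(b-c)\bfdelta_1$. Inverting with $\bfGamma^{-1}=\frac1G\bfGamma^*$ (equivalently $\bfGamma\bfGamma^*=G\brI$) gives
\begin{equation*}
\bfa=\tfrac1G\bfGamma\bigparen{c\bfchi_{\widehat{\calG}}+(b-c)\bfdelta_1}=c\bfdelta_0+\tfrac{b-c}{G}\bfchi_\calG,
\end{equation*}
because $\bfGamma\bfchi_{\widehat{\calG}}=G\bfdelta_0$ by orthogonality of characters and $\bfGamma\bfdelta_1=\bfchi_\calG$. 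So $\bfa$ is constant off $0$. The converse is the same computation run in reverse, since $\bfGamma^*\bfdelta_0=\bfchi_{\widehat{\calG}}$ and $\bfGamma^*\bfchi_\calG=G\bfdelta_1$. Since $\bfa(0)=RD$ automatically, $\bfa$ being constant on nonzero elements is exactly the definition of a difference family. The constant is then forced to be $\frac{RD(D-1)}{G-1}$, which recovers~\eqref{eq.def of difference family}.

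For the real statement, suppose each $\calD_r$ is reversible. The discussion after~\eqref{eq.harmonic Gram} shows that each harmonic Gram matrix is then real. The explicit unitary~\eqref{eq.conjugate pairs}, applied to conjugate pairs of rows with self-inverse rows left unchanged, converts each layer into a unitarily equivalent real UNTF for $\bbR^D$. Unitary maps applied layer by layer preserve unit norm, tightness, and every $\abs{\ip{\cdot}{\cdot}}$, so stratification is preserved and the result is a $\SUNTF_\bbR(D,G,R)$.

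The main obstacle is the middle step: the equivalence between constancy of $\bfGamma^*\bfa$ off the trivial character and constancy of $\bfa$ off $0$. It rests on $\bfGamma$ being invertible and mapping the span of $\set{\bfdelta_0,\bfchi_\calG}$ onto the span of $\set{\bfchi_{\widehat{\calG}},\bfdelta_1}$. Once that is written cleanly, the rest is bookkeeping.
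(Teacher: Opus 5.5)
Your proof is correct and follows the paper's argument: you sum \eqref{eq.DFT of char mod squared} over $r$ and invert the discrete Fourier transform, which turns flatness of the summed squared Gram entries into flatness of the summed autocorrelations, and you handle reversibility via \eqref{eq.conjugate pairs}. The only difference is bookkeeping: you reduce SUNTF to stratification using the tightness of harmonic frames and then let the counting identity fix the constant, whereas the paper invokes Theorem~\ref{thm.SUNTF}(a) to pin the chordal constant to the Welch value $\frac{G-D}{D(G-1)}$ before inverting.
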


\begin{proof}[Proof.]
For each $r\in\calR$, \smash{$(\bfphi_{r,\gamma})_{\gamma\in\widehat{\calG}}$} is a finite sequence of $\#(\widehat{\calG})=\#(\calG)=G$ unit vectors in the space $\bbC^{\calD_r}$ of dimension $\#(\calD_r)=D$.
By Theorem~\ref{thm.SUNTF}(a), $\bigparen{(\bfphi_{r,\gamma})_{\gamma\in\widehat{\calG}}}_{r\in\calR}$ is thus a $\SUNTF_\bbC(D,G,R)$ if and only if
\begin{equation*}
\tfrac1R\sum_{r\in\calR}\abs{\ip{\bfphi_{r,\gamma_1}}{\bfphi_{r,\gamma_2}}}^2
=\left\{\begin{array}{cl}1,&\ \gamma_1=\gamma_2\\ \frac{G-D}{D(G-1)},&\ \gamma_1\neq\gamma_2\end{array}\right\}
=\tfrac{(D-1)G}{D(G-1)}\bfdelta_1(\gamma_1^{}\gamma_2^{-1})+\tfrac{G-D}{D(G-1)}
\end{equation*}
for any $\gamma_1,\gamma_2\in\widehat{\calG}$.
Moreover, since each finite sequence \smash{$(\bfphi_{r,\gamma})_{\gamma\in\widehat{\calG}}$} is a harmonic frame,
\eqref{eq.DFT of char mod squared} gives
\begin{equation*}
\tfrac1R\sum_{r\in\calR}\abs{\ip{\bfphi_{r,\gamma_1}}{\bfphi_{r,\gamma_2}}}^2
=\tfrac1R\sum_{r\in\calR}\tfrac1{D^2}\bigparen{\bfGamma^*(\bfchi_{\calD_r}^{}*\bfchi_{\calD_r}^{\star})}(\gamma_1^{}\gamma_2^{-1})
=\tfrac1{D^2R}\biggparen{\bfGamma^*\sum_{r\in\calR}\bfchi_{\calD_r}^{}*\bfchi_{\calD_r}^{\star}}(\gamma_1^{}\gamma_2^{-1})
\end{equation*}
for any $\gamma_1,\gamma_2\in\widehat{\calG}$.
Together, these imply that $\bigparen{(\bfphi_{r,\gamma})_{\gamma\in\widehat{\calG}}}_{r\in\calR}$ is a $\SUNTF_\bbC(D,G,R)$ if and only if
\begin{equation*}
\tfrac1{D^2R}\bfGamma^*\sum_{r\in\calR}\bfchi_{\calD_r}^{}*\bfchi_{\calD_r}^{\star}
=\tfrac{(D-1)G}{D(G-1)}\bfdelta_1+\tfrac{G-D}{D(G-1)}\bfchi_{\widehat{\calG}}.
\end{equation*}
Since $\bfGamma$ is invertible with $\bfGamma^{-1}=\frac1G\bfGamma^*$ where $\bfGamma\bfdelta_1=\bfchi_\calG$ and $\bfGamma^*\bfdelta_0=\bfchi_{\widehat{\calG}}$, this is equivalent to having
\begin{equation*}
\tfrac{1}{R}\sum_{r\in\calR}\bfchi_{\calD_r}^{}*\bfchi_{\calD_r}^{\star}
=\tfrac{D^2}{G}\Bigparen{\tfrac{(D-1)G}{D(G-1)}\bfchi_{\calG}+\tfrac{G(G-D)}{D(G-1)}\bfdelta_0}
=\tfrac{D(G-D)}{G-1}\bfdelta_0+\tfrac{D(D-1)}{G-1}\bfchi_{\calG}.
\end{equation*}
Evaluating this at any $g\in\calG$ equivalently gives via~\eqref{eq.auto of char} that
\begin{equation*}
\tfrac{1}{R}\sum_{r\in\calR}\#\set{(d_1,d_2)\in\calD_r\times\calD_r: g=d_1-d_2}
=\left\{\begin{array}{cl}
D,&\ g=0,\\
\tfrac{D(D-1)}{G-1},&\ g\neq0.
\end{array}\right.
\end{equation*}
Now note that $\#\set{(d_1,d_2)\in\calD_r\times\calD_r: 0=d_1-d_2}=\#(\calD_r)=D$ for each $r\in\calR$,
and so the above condition at $g=0$ is superfluous.
Thus, $\bigparen{(\bfphi_{r,\gamma})_{\gamma\in\widehat{\calG}}}_{r\in\calR}$ is a $\SUNTF_\bbC(D,G,R)$ if and only if~\eqref{eq.def of difference family} holds, namely if and only if $(\calD_r)_{r\in\calR}$ is a difference family for $\calG$.
The final conclusion immediately follows from the previous discussion regarding~\eqref{eq.conjugate pairs}.
\end{proof}

\begin{remark}
Theorem~\ref{thm.SUNTF to ECTFF} converts the $\SUNTF_\bbC(D,N,R)$ that arises from a difference family via Theorem~\ref{thm.DF to SUNTF} into an $\ECTFF_\bbC(DR,R,N)$.
Notably, Theorem~4.2 of~\cite{FickusMW25} constructs an ECTFF with the same parameters from such a difference family.
As we now explain, this is not a coincidence:
our Theorems~\ref{thm.SUNTF to ECTFF} and~\ref{thm.DF to SUNTF} can be regarded as a streamlined generalization of this construction of~\cite{FickusMW25}.

Applying Theorem~\ref{thm.SUNTF to ECTFF} to the vectors \smash{$\bigparen{(\bfphi_{r,\gamma})_{\gamma\in\widehat{\calG}}}_{r\in\calR}$}
of Theorem~\ref{thm.DF to SUNTF} yields isometries
\smash{$(\widehat{\bfPhi}_\gamma)_{\gamma\in\widehat{\calG}}$} from \smash{$\bbC^\calR$} into \smash{$\oplus_{r\in\calR}\bbC^{\calD_r}$},
which we identify with \smash{$\bbC^{\widehat{\calD}}$} where \smash{$\widehat{\calD}:=\sqcup_{r\in\calR}(\set{r}\times\calD_r)$}.
For any $r,r'\in\calR$ and $g\in\calD_r$,
the corresponding entry of each \smash{$\widehat{\calD}\times\calR$} matrix \smash{$\widehat{\bfPhi}_\gamma$} is
\begin{equation*}
\widehat{\bfPhi}_\gamma\bigparen{(r,g),r'}
=\left\{\begin{array}{cl}
\bfphi_{r,\gamma}(g),&\ r'=r\smallskip\\
0,&\ r'\neq r
\end{array}\right\}
=\tfrac{1}{\sqrt{D}}\left\{\begin{array}{cl}
\gamma(g),&\ r'=r,\smallskip\\
0,&\ r'\neq r.
\end{array}\right.
\end{equation*}
Now assume without loss of generality that the indexing set $\calR$ of the difference family is itself a finite abelian group,
such as $\bbZ_R$,
and let \smash{$\widehat{\calR}$} be its Pontryagin dual.
Identify the Pontryagin dual of $\calG\times\calR$ with \smash{$\widehat{\calG}\times\widehat{\calR}$} in the usual way,
namely so that its character table \smash{$\bfGamma_{\calG\times\calR}$} is the tensor product of the character tables $\bfGamma_{\calG}$ and $\bfGamma_{\calR}$ of $\calG$ and $\calR$, respectively,
implying
\smash{$\bfGamma_{\calG\times\calR}\bigparen{(g,r),(\gamma,\rho)}
=\gamma(g)\rho(r)$}.
The harmonic frame \smash{$(\widetilde{\bfphi}_{\gamma,\rho})_{(\gamma,\rho)\in\widehat{\calG}\times\widehat{\calR}}$}
arising from the subset \smash{$\widetilde{\calD}:=\sqcup_{r\in\calR}(\calD_r\times\set{r})$} of $\calG\times\calR$ thus has \smash{$\widetilde{\bfphi}_{\gamma,\rho}(g,r)=\frac1{\sqrt{DR}}\gamma(g)\rho(r)$} for any $r\in\calR$, \smash{$g\in\calD_r$}.
Letting \smash{$\bfP:\bbC^{\widehat{\calD}}\rightarrow\bbC^{\widetilde{\calD}}$} be the permutation matrix defined by \smash{$\bfP\bfdelta_{r,g}:=\bfdelta_{g,r}$} for any such $r$ and $g$,
and letting \smash{$\widetilde{\bfPhi}_\gamma:=\bfP\widehat{\bfPhi}_\gamma\tfrac1{\sqrt{R}}\bfGamma_\calR$} for each \smash{$\gamma\in\widehat{\calG}$}
thus yields isometries \smash{$(\widetilde{\bfPhi}_\gamma)_{\gamma\in\widehat{\calG}}$}
from \smash{$\bbC^{\widehat{\calR}}$} into \smash{$\bbC^{\widetilde{\calD}}$} that
are equivalent to \smash{$(\widehat{\bfPhi}_\gamma)_{\gamma\in\widehat{\calG}}$} with
\begin{equation*}
\widetilde{\bfPhi}_\gamma\bigparen{(g,r),\rho}
=\tfrac1{\sqrt{R}}\sum_{r'\in\calR}\widehat{\bfPhi}_\gamma\bigparen{(r,g),r'}\bfGamma_\calR(r',\rho)
=\tfrac1{\sqrt{DR}}\gamma(g)\rho(r)
=\widetilde{\bfphi}_{\gamma,\rho}(g,r)
\end{equation*}
for any $(g,r)\in\widetilde{\calD}$ and $\rho\in\widehat{\calR}$.
Thus, each $\widetilde{\bfPhi}_\gamma$ is the synthesis map for $(\widetilde{\bfphi}_{\gamma,\rho})_{\rho\in\widehat{\calR}}$.
Accounting for changes in notation,
the careful reader will recognize this to be the instance of the construction of Theorem~4.2 of~\cite{FickusMW25} in the special case where its ``$\calG$'' and ``$\calH$'' are $\calG\times\calR$ and $\calG\times\set{0}$, respectively,
with the annihilator ``$\calH^\perp$'' of the latter being $\set{1}\times\widehat{\calR}$.
In general, this result of~\cite{FickusMW25} characterizes when partitioning the vectors of a harmonic frame according to cosets of the annihilator of a subgroup of the underlying group yields the isometries of a (EC/EI) TFF.
\end{remark}

\begin{example}
Recall that $\calD_1:=\set{1,4}$ and $\calD_2:=\set{2,3}$ form a difference family for $\bbZ_5$,
as verified in~\eqref{eq.DF(5,2,1)}.
The synthesis matrices $\bfPhi_1$ and $\bfPhi_2$ of the associated harmonic frames are obtained by extracting the corresponding rows from the character table of $\bbZ_5$ and then normalizing columns,
namely
\begin{equation*}
\bfPhi_1
=\tfrac{1}{\sqrt{2}}\left[\begin{array}{cllll}
1&\omega  &\omega^2&\omega^3&\omega^4\\
1&\omega^4&\omega^3&\omega^2&\omega
\end{array}\right],
\quad
\bfPhi_2
=\tfrac{1}{\sqrt{2}}\left[\begin{array}{cllll}
1&\omega^2&\omega^4&\omega  &\omega^3\\
1&\omega^3&\omega  &\omega^4&\omega^2
\end{array}\right],
\quad
\omega:=\exp(\tfrac{2\pi\rmi}{5}).
\end{equation*}
By Theorem~\ref{thm.DF to SUNTF}, these two harmonic frames form a $\SUNTF_\bbC(2,5,2)$ which in turn yields an $\ECTFF_\bbC(4,2,5)$ via Theorem~\ref{thm.SUNTF to ECTFF}.
In fact, since $\calD_1$ and $\calD_2$ are both reversible,
these two UNTFs are unitarily equivalent to real-valued ones:
multiplying $\bfPhi_1$ and $\bfPhi_2$ above by
\smash{$\tfrac1{\sqrt{2}}\left[\begin{smallmatrix}\phm1&\phm1\\-\rmi&\phm\rmi\end{smallmatrix}\right]$} yields the synthesis matrices~\eqref{eq.ex.SUNTF(2,5,2).1} and~\eqref{eq.ex.SUNTF(2,5,2).2}, respectively, of the $\SUNTF_\bbR(2,5,2)$ of Example~\ref{ex.SUNTF(2,5,2)},
which yields the $\ECTFF_\bbR(4,2,5)$ of Example~\ref{ex.ECTFF(4,2,5)} via Theorem~\ref{thm.SUNTF to ECTFF}.
This highlights another advantage of the generalized approach of Theorems~\ref{thm.SUNTF to ECTFF} and~\ref{thm.DF to SUNTF} over that of~\cite{FickusMW25}:
the former sometimes yields real ECTFFs.

In the following result, we show that this example is not a fluke:
in general, for any nonempty subset $\calD$ of a finite field $\bbF_Q$,
\smash{$(x\calD)_{x\in\bbF_Q^\times}$} is a difference family for this field's additive group.
In particular, taking $\calD=\set{1,4}$ yields the difference family $(\calD_1,\calD_2,\calD_3,\calD_4)$ where,
due to symmetries in $\calD$ itself, $\calD_4=\calD_1=\calD=\set{1,4}$ and $\calD_3=\calD_2=2\calD=\set{2,3}$.
Here, since the difference family $(\calD_1,\calD_2,\calD_3,\calD_4)$ consists of two copies of $(\calD_1,\calD_2)$,
this directly implies that the latter is itself a difference family,
obviating the need for the calculations of~\eqref{eq.DF(5,2,1)}.
Moreover, since this particular $\calD$ is reversible, the same is true for $x\calD$ for any \smash{$x\in\bbF_Q^\times$}.
\end{example}

\begin{theorem}
\label{thm.DF construction}
A $\SUNTF_\bbR(D,Q,Q-1)$ exists for any prime power $Q$ and positive integer $D\leq Q$.
\end{theorem}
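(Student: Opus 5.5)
The plan is to build a difference family of the form $(x\calD)_{x\in\bbF_Q^\times}$ for the additive group of $\bbF_Q$ and then apply Theorem~\ref{thm.DF to SUNTF}. The example preceding the statement already suggests this.

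\emph{Step 1: the difference family.} Let $\calG$ be the additive group of $\bbF_Q$, so $G=Q$. Fix any subset $\calD\subseteq\bbF_Q$ with $\#(\calD)=D$, and set $\calD_x:=x\calD$ for each $x\in\bbF_Q^\times$. Since $x\neq0$, each $\calD_x$ has cardinality $D$. For any $g\in\bbF_Q\backslash\set{0}$, the map $(d_1,d_2)\mapsto(xd_1,xd_2)$ gives
\begin{equation*}
\sum_{x\in\bbF_Q^\times}\#\set{(e_1,e_2)\in\calD_x\times\calD_x: g=e_1-e_2}
=\sum_{x\in\bbF_Q^\times}\#\set{(d_1,d_2)\in\calD\times\calD: d_1-d_2=x^{-1}g}.
\end{equation*}
As $x$ ranges over $\bbF_Q^\times$, the element $x^{-1}g$ ranges exactly once over $\bbF_Q^\times$. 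So this sum counts every ordered pair of distinct elements of $\calD$ exactly once, giving $D(D-1)$, independent of $g$. This equals $\frac{RD(D-1)}{G-1}$ with $R=Q-1$ and $G=Q$, so~\eqref{eq.def of difference family} holds. Theorem~\ref{thm.DF to SUNTF} then yields a $\SUNTF_\bbC(D,Q,Q-1)$.

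\emph{Step 2: realness.} To get a real SUNTF, I would choose $\calD$ reversible. Then each $x\calD$ is reversible too, since $-(x\calD)=x(-\calD)=x\calD$, and the last sentence of Theorem~\ref{thm.DF to SUNTF} gives a $\SUNTF_\bbR(D,Q,Q-1)$. The remaining issue, which I expect to be the only real obstacle, is showing that a reversible $\calD$ of every size $D\leq Q$ exists.
\begin{itemize}
\item When $Q$ is even, $-g=g$ for every $g$, so every subset is reversible.
\item When $Q$ is odd, $\bbF_Q$ splits as $\set{0}$ together with $\frac{Q-1}{2}$ pairs $\set{g,-g}$. Any $D$ can be written as a sum of pair sizes $2$ plus possibly the singleton $\set{0}$: for even $D\leq Q-1$ take $\frac D2$ pairs, and for odd $D\leq Q$ take $\set{0}$ and $\frac{D-1}{2}$ pairs.
\end{itemize}

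For the edge cases, $D=Q$ gives $\calD=\bbF_Q$, and the harmonic frames are orthonormal bases. The conclusion still follows formally from Theorem~\ref{thm.DF to SUNTF}, since then $C=Q$ and~\eqref{eq.def of difference family} holds trivially. The hypothesis $G>1$ is automatic because $Q\geq2$.
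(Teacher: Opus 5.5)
Your proposal is correct and follows the paper's proof essentially step for step: the same reversible subset $\calD$ (built from $\set{0}$ and opposite pairs $\set{g,-g}$ when $Q$ is odd), the same dilated family $(x\calD)_{x\in\bbF_Q^\times}$, the same difference count, and the same appeal to Theorem~\ref{thm.DF to SUNTF}. One trivial slip: when $D=Q$ the total count is $C=\frac{RD(D-1)}{G-1}=Q(Q-1)$, not $Q$ (it is $Q$ per layer), though that case needs no separate treatment because your Step~1 already covers it.
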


\begin{proof}[Proof.]
We claim that the additive group of the finite field $\bbF_Q$ of order $Q$ contains a reversible subset $\calD$ of cardinality $D$.
Indeed, if $Q$ is even then every subset $\calD$ of cardinality $D$ is reversible.
Else, $Q$ is odd and $\bbF_Q$ partitions as $\bbF_Q=\set{0}\sqcup\calS\sqcup(-\calS)$ for some subset $\calS$ of \smash{$\bbF_Q$} of cardinality \smash{$\frac{Q-1}{2}$}.
In this case, if $D$ is odd, let $\calD=\set{0}\sqcup\calC\sqcup(-\calC)$ where $\calC$ is any subset of $\calS$ of cardinality $\frac{D-1}{2}$, while if $D$ is even, $\calD=\calC\sqcup(-\calC)$ where $\calC$ is any subset of $\calS$ of cardinality $\frac{D}{2}$;
the latter exists since
\smash{$\frac{D}{2}
=\lfloor\frac{D}{2}\rfloor
\leq\lfloor\frac{Q}{2}\rfloor
=\frac{Q-1}{2}$}.

Having such a set $\calD$, for each $x\in\bbF_Q^\times$,
let $\calD_x:=x\calD=\set{xy: y\in\calD}$.
Then each $\calD_x$ is reversible since $-\calD_x=(-x)\calD=x(-\calD)=x\calD=\calD_x$.
Moreover, since $x\neq0$, the map $y\mapsto xy$ is injective over $\bbF_Q$,
and so $\#(\calD_x)=\#(\calD)=D$.
By Theorem~\ref{thm.DF to SUNTF},
it thus suffices to show that \smash{$(\calD_x)_{x\in\bbF_Q^\times}$} is a difference family for the additive group of $\bbF_Q$.
This follows via~\eqref{eq.def of difference family}: for any \smash{$y\in\bbF_Q^\times$},
\begin{align*}
\sum_{x\in\bbF_Q^\times}\#\set{(d_1,d_2)\in\calD_x\times\calD_x: y=d_1-d_2}
&=\sum_{x\in\bbF_Q^\times}\#\set{(z_1,z_2)\in\calD\times\calD: y=x(z_1-z_2)}\\
&=\sum_{x\in\bbF_Q^\times}\#\set{(z_1,z_2)\in\calD\times\calD: \tfrac{z_1-z_2}{y}=\tfrac1x}\\
&=\#\set{(z_1,z_2)\in\calD\times\calD: \tfrac{z_1-z_2}{y}\neq0}\\
&=\#\set{(z_1,z_2)\in\calD\times\calD: z_1\neq z_2}\\
&=D(D-1),
\end{align*}
which indeed equals
``$\tfrac{RD(D-1)}{G-1}$'' when $R=\#(\calR)=\#(\bbF_Q^\times)=Q-1$ and $G=\#(\calG)=\#(\bbF_Q)=Q$.
\end{proof}

\subsection{Stratified unit norm tight frames from Gauss sums}

Recall that a character of the additive group of a finite field $\bbF_Q$ is a function $\gamma:\bbF_Q\rightarrow\bbT$ such that $\gamma(x+y)=\gamma(x)\gamma(y)$ for any $x,y\in\bbF_Q$.
We call any such $\gamma$ an \textit{additive character} of $\bbF_Q$,
and denote the set of them as \smash{$\widehat{\bbF}_Q$}.
As before, we identify any \smash{$\gamma\in\widehat{\bbF}_Q$} with the vector \smash{$\bfgamma\in\bbC^{\bbF_Q}$} with $\bfgamma(x):=\gamma(x)$ for any $x\in\bbF_Q$.
Similarly, a character of the (cyclic) multiplicative group \smash{$\bbF_Q^{\times}$} of $\bbF_Q$ is a function
\smash{$\theta:\bbF_Q^\times\rightarrow\bbT$} such that $\theta(xy)=\theta(x)\theta(y)$ for any \smash{$x,y\in\bbF_Q^\times$}.
We call any such $\theta$ a \textit{multiplicative character} of $\bbF_Q$,
and denote the set of them as \smash{$\widehat{\bbF}_Q^\times$}.
We extend any \smash{$\theta\in\widehat{\bbF}_Q^\times$} to a vector $\bftheta\in\bbC^{\bbF_Q}$ by letting $\bftheta(0):=0$ and $\bftheta(x)=\theta(x)$ for any \smash{$x\in\bbF_Q^\times$}.
(When $Q$ is odd, $\bbF_Q$ has a unique real-valued nonidentity multiplicative character $\theta$ and its extension $\bftheta$ is the \textit{quadratic character} known as the \textit{Legendre symbol} when $Q$ is prime.)
Any such $\bftheta$ satisfies $\bftheta(xy)=\bftheta(x)\bftheta(y)$ for any $x,y\in\bbF_Q$.
A \textit{Gauss sum} over $\bbF_Q$ is an inner product
\begin{equation}
\label{eq.Gauss sum}
\ip{\bfgamma}{\bftheta}
=\sum_{y\in\bbF_Q}\overline{\bfgamma(y)}\bftheta(y)
=\sum_{y\in\bbF_Q^\times}\overline{\gamma(y)}\theta(y)
\end{equation}
of an additive character $\bfgamma$ and extended multiplicative character $\bftheta$ of $\bbF_Q$.
In this section, we use multiplicative characters and properties of Gauss sums to construct SUNTFs.
We rely on the following result, implicit in the proof of Theorem~5.2 of~\cite{FickusIJM23}:

\begin{lemma}[Theorem~5.2 of~\cite{FickusIJM23}]
\label{lem.autocor of mult char}
Let $Q$ be a prime power and let \smash{$\bftheta\in\bbC^{\bbF_Q}$} be the extension of a nonidentity multiplicative character of $\bbF_Q$ with $\bftheta(0):=0$.
Then the $\bbF_Q$-translates \smash{$(\brT^x\bftheta)_{x\in\bbF_Q}$} of $\bftheta$ form a regular simplex for their span,
with $\norm{\bftheta}^2=Q-1$ and $\ip{\brT^x\bftheta}{\bftheta}=-1$ when $x\neq0$.
\end{lemma}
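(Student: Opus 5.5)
The plan is a direct computation of the autocorrelation of $\bftheta$, using only multiplicativity and the orthogonality of characters of the cyclic group $\bbF_Q^\times$. Three facts are needed: the norm, the off-diagonal inner products, and the regular-simplex conclusion.

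\emph{Norm.} Since $\bftheta(0)=0$ and $\abs{\theta(y)}=1$ for $y\neq0$, we get $\norm{\bftheta}^2=Q-1$ immediately. Translation is unitary, so each $\brT^x\bftheta$ has this same norm.

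\emph{Off-diagonal inner products.} Fix $x\neq0$. Since $(\brT^x\bftheta)(y)=\bftheta(y-x)$,
\begin{equation*}
\ip{\brT^x\bftheta}{\bftheta}=\sum_{y\in\bbF_Q}\overline{\bftheta(y-x)}\bftheta(y).
\end{equation*}
Only terms with $y\neq0$ and $y\neq x$ contribute. For such $y$, unimodularity gives $\overline{\theta(y-x)}=\theta\bigl((y-x)^{-1}\bigr)$, so each term equals $\theta\bigl(y(y-x)^{-1}\bigr)$. Substitute $z:=y(y-x)^{-1}$. The map $y\mapsto y/(y-x)$ is a M\"obius transformation, injective on $\bbF_Q\backslash\set{x}$. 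As $y$ ranges over $\bbF_Q\backslash\set{0,x}$, the value $z$ ranges over $\bbF_Q\backslash\set{0,1}$: the value $0$ is attained only at $y=0$, and $z=1$ would force $x=0$. The sum therefore becomes $\sum_{z\in\bbF_Q^\times}\theta(z)-\theta(1)$. Because $\theta$ is a nonidentity character of the finite group $\bbF_Q^\times$, this character sum vanishes. Hence $\ip{\brT^x\bftheta}{\bftheta}=-1$.

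\emph{Regular simplex.} By the shift-invariance $\ip{\brT^{x_1}\bftheta}{\brT^{x_2}\bftheta}=\ip{\brT^{x_2-x_1}\bftheta}{\bftheta}$, the Gram matrix of $(\brT^x\bftheta)_{x\in\bbF_Q}$ is $Q\brI-\brJ$. This is $Q$ times the projection onto the orthogonal complement of the all-ones vector. Equivalently, the $Q$ vectors are equal-norm with constant inner product $-1=-\frac{Q-1}{Q-1}$ and sum to zero. The sum vanishes because $\sum_{x}\brT^x\bftheta$ is the constant vector $\sum_y\bftheta(y)\bfchi=0$. These are exactly the defining properties of a regular simplex in their $(Q-1)$-dimensional span.

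The only step requiring care is the change of variables $z=y/(y-x)$: I must verify bijectivity onto $\bbF_Q\backslash\set{0,1}$. I expect this to be routine rather than a genuine obstacle.
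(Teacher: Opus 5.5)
Your proof is correct, and it takes a genuinely different route from the paper's.

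The paper works on the Fourier side. It first builds an explicit isomorphism $x\mapsto\gamma_x$ from $\bbF_Q$ onto $\widehat{\bbF}_Q$ via the field trace. It then uses the scaling identity $\ip{\bfgamma_x}{\bftheta}=\overline{\theta(x)}\ip{\bfgamma_1}{\bftheta}$ together with Parseval to show that every Gauss sum with a nontrivial additive character has squared modulus $Q$, and that it vanishes at $\gamma=1$. Finally it inverts the Fourier transform of the autocorrelation to obtain $\bftheta*\bftheta^{\star}=Q\bfdelta_0-\bfchi_{\bbF_Q}$.

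You instead evaluate the autocorrelation directly, which is essentially the classical computation of the Jacobi sum $J(\theta,\overline{\theta})$. The M\"obius substitution $z=y/(y-x)$ collapses the sum to $\sum_{z\neq0,1}\theta(z)=-1$. Your bijectivity check is complete: injectivity uses $x\neq0$, the value $z=1$ is never attained, and $y=0\mapsto0$.

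What each route buys:
\begin{itemize}
\item Your route is shorter and entirely elementary. It never needs additive characters, the trace, or the fact that $\widehat{\bbF}_Q\cong\bbF_Q$.
\item The paper's route produces the Gauss-sum magnitude $\abs{\ip{\bfgamma}{\bftheta}}^2=Q$ as an intermediate fact, which fits the Fourier viewpoint of that subsection. By Plancherel the two statements are equivalent, so applying $\bfGamma^*$ to your identity would recover it.
\end{itemize}

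You also verify the regular-simplex conclusion explicitly: the Gram matrix is $Q\brI-\brJ$ and the translates sum to zero. The paper leaves that step implicit in the inner-product values.
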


For the sake of completeness,
an elementary proof of this result is provided in the appendix.
We now use it to generalize the $\SUNTF_\bbR(2,5,5)$ of Example~\ref{ex.SUNTF(2,5,5)},
taking inspiration from the ETF construction of~\cite{Renes07,Strohmer08}:

\begin{theorem}
\label{thm.Gauss sums}
A $\SUNTF_\bbC(D,Q,Q)$ exists for any prime power $Q$ and positive integer $D\leq Q$.
Such a SUNTF can be chosen to be real if either $D$ or $Q$ is odd.
\end{theorem}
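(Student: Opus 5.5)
The plan is to generalize Example~\ref{ex.SUNTF(2,5,5)}. I would build a single $\UNTF_\bbC(D,Q)$ $(\bfphi_n)_{n\in\bbF_Q}$ and take its $Q$ cyclic shifts $\bigparen{(\bfphi_{r+n})_{n\in\bbF_Q}}_{r\in\bbF_Q}$ as the layers. Each layer is a permutation of a UNTF, so it is a $\UNTF_\bbC(D,Q)$. Stratification then reduces, for each $x\in\bbF_Q^\times$, to the single averaging identity
\begin{equation*}
\tfrac1Q\sum_{n\in\bbF_Q}\abs{\ip{\bfphi_{n+x}}{\bfphi_n}}^2=\tfrac{Q-D}{D(Q-1)}.
\end{equation*}
This says that every cyclic ``subdiagonal'' of $\abs{\bfPhi^*\bfPhi}^2$ averages to the squared Welch bound. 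By Theorem~\ref{thm.SUNTF}(a), this identity is all that needs to be checked.

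To produce $\bfPhi$, I would take $\bfPhi$ to be $D$ rows of a flat unitary $\bfW\in\bbC^{\bbF_Q\times\bbF_Q}$, rescaled so that its columns are unit norm. The rows of $\bfW$ are orthonormal, so the columns of the $D$-row submatrix form a UNTF. For $\bfW$ I would use translates of a vector $\bfu$ with flat Fourier magnitude, built from a multiplicative character in the spirit of~\cite{Renes07,Strohmer08}. Let $\bftheta$ be an extended nonidentity multiplicative character. By Lemma~\ref{lem.autocor of mult char}, $\bftheta$ has autocorrelation $Q\bfdelta_0-\bfchi_{\bbF_Q}$, so $\abs{\bfGamma^*\bftheta}^2$ equals $Q$ off the trivial character and $0$ on it. Setting $\bfu:=\bftheta+Q^{-1/2}\bfchi_{\bbF_Q}$ fills in the trivial frequency and makes $\abs{\bfGamma^*\bfu}^2\equiv Q$. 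Hence $(\brT^d\bfu)_{d\in\bbF_Q}$ is an orthogonal basis of equal-norm vectors. Choosing a set $\calD\subseteq\bbF_Q$ of $D$ shifts, I would define $\bfphi_n(d):=Q^{-\frac12}\cdot(\brT^d\bfu)(n)$. Alternatively, I could use the dual picture: rows indexed by $D$ additive characters, with the column index also twisted by $\bftheta$.

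Then I would expand $\sum_n\abs{\ip{\bfphi_{n+x}}{\bfphi_n}}^2$ as a fourth-order sum over $d,d'\in\calD$ and $n$. Each inner sum over $n$ is an autocorrelation at lag $x$ of the product $\brT^d\bfu\cdot\overline{\brT^{d'}\bfu}$. These correlations can be rewritten via multiplicativity, using $\bftheta(y)\overline{\bftheta(y')}=\bftheta(y/y')$ for nonzero $y,y'$. After a change of variables they become either Gauss sums, whose modulus is $\sqrt Q$, or sums of $\bftheta$ over $\bbF_Q$, which vanish. The terms with $d=d'$ contribute a constant independent of $x$. The goal is to show that the cross terms with $d\neq d'$ either vanish or combine to an $x$-independent value, and then to match the total against $\frac{Q(Q-D)}{D(Q-1)}$. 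I expect this step to be the main obstacle. It is a genuine fourth-moment computation, and if the naive choice of $\bfu$ fails, I would adjust two things: the phase of the constant added to $\bftheta$, and whether the layers are indexed by translations or by dilations $n\mapsto xn$ of $\bbF_Q^\times$ together with the point $0$.

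For realness, when $Q$ is odd I would take $\bftheta$ to be the quadratic character, so that every ingredient is real up to the phase of the Gauss sum. I would then pass to real synthesis matrices using the conjugate-pair unitary trick of~\eqref{eq.conjugate pairs}, applied layerwise with a common unitary, which preserves both tightness and stratification. When $Q$ is even and $D$ is odd, I would instead choose a set $\calD$ fixed under the relevant involution, so that the rows pair up into conjugate pairs plus one self-conjugate row, and apply the same trick.
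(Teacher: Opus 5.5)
\textbf{There is a genuine gap: the proposed construction of the UNTF fails, and the key fourth-moment step has no working mechanism.}

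Your outer framework is exactly the paper's: one UNTF $(\bfphi_n)_{n\in\bbF_Q}$, layers given by its translates, and a single averaging identity checked via Theorem~\ref{thm.SUNTF}(a). The problem is your construction of $(\bfphi_n)_{n\in\bbF_Q}$. The vector $\bfu=\bftheta+Q^{-1/2}\bfchi_{\bbF_Q}$ has flat Fourier magnitude, so its translates are orthogonal and your $D$ rows are orthonormal. But $\bfu$ itself is not flat: $\abs{\bfu(0)}^2=\tfrac1Q$, while $\abs{\bfu(y)}^2=1+\tfrac1Q+2Q^{-1/2}\real\theta(y)$ for $y\neq0$. Hence $\norm{\bfphi_n}^2=\tfrac1Q\sum_{d\in\calD}\abs{\bfu(n-d)}^2$ is in general not constant in $n$. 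Normalizing the columns then destroys tightness; one checks this already for $Q=3$, $D=2$, $\calD=\set{0,1}$. Once tightness is lost, Theorem~\ref{thm.SUNTF}(a) guarantees that the Welch-constant identity fails.

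Replacing $\bfu$ by a genuinely flat perfect sequence does not obviously help. The natural choice is a quadratic chirp $y\mapsto\gamma(y^2)$ with $Q$ odd. This makes your circulant diagonally equivalent to a character table, so $\abs{\ip{\bfphi_{n_1}}{\bfphi_{n_2}}}$ depends only on $n_1-n_2$. Then every translated layer has the same Gram moduli, and stratification would force each layer to already be an ETF.

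More fundamentally, the step you flag as the main obstacle has no working mechanism. When the coordinates are translates of a single sequence, the new fourth-order terms are sums like $\sum_{n}\bftheta(n-d)\overline{\bftheta(n-d')}\,\overline{\bftheta(n+x-d)}\bftheta(n+x-d')$. These are character sums along a rational function with four distinct zeros and poles. Multiplicativity does not reduce them to Gauss sums or to full sums of $\bftheta$. For the quadratic character they are point counts of genus-one curves whose cross-ratio $\tfrac{x^2}{x^2-(d-d')^2}$ varies with $x$.

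The missing idea is to put \emph{different} multiplicative characters in different coordinates, evaluated at the \emph{same} point. Take distinct nontrivial $\theta_2,\dotsc,\theta_D$ (available once $D\leq Q-1$; the cases $D\in\set{1,Q}$ are trivial). Set $\bfphi_0=\bfdelta_1$ and $\bfphi_x=u\bfdelta_1+v\sum_{d=2}^{D}\theta_d(x)\bfdelta_d$ for $x\neq0$, where $u^2=\tfrac{Q-D}{D(Q-1)}$ and $v^2=\tfrac{Q}{D(Q-1)}$. Unit norm is then automatic. The fourth-order terms collapse via $\overline{\bftheta_{d_1}(x)}\bftheta_{d_2}(x)=\bfeta_{d_1,d_2}(x)$ into lag-$y$ autocorrelations of the single characters $\eta_{d_1,d_2}=\theta_{d_1}^{-1}\theta_{d_2}$. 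Lemma~\ref{lem.autocor of mult char} evaluates these exactly as $-1$ for $d_1\neq d_2$, and they equal $Q-2$ for $d_1=d_2$, independently of $y$. The identity then reduces to $Qu^2=v^2(Q-D)$.

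Realness then comes from choosing the $\theta_d$ in conjugate pairs (plus the quadratic character when $D$ is even and $Q$ is odd) and applying~\eqref{eq.conjugate pairs}. It does not come from a choice of shift set $\calD$.
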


\begin{proof}[Proof.]
The result is trivial if $D\in\set{1,Q}$ and so we assume that $2\leq D\leq Q-1$ for the remainder of this proof.
Let \smash{$(\theta_d)_{d=2}^{D}$} be distinct nontrivial multiplicative characters of \smash{$\bbF_Q$}.
Define vectors \smash{$(\bfphi_x)_{x\in\bbF_Q}$} in \smash{$\bbC^D$} by letting $\bfphi_0:=\bfdelta_1$ and as follows for each $x\in\bbF_Q^\times$:
\begin{equation}
\label{eq.pf.Gauss.1}
\bfphi_x=u\bfdelta_1+v\sum_{d=2}^{D}\theta_d(x)\bfdelta_d
\quad\text{where}\quad
u:=\sqrt{\tfrac{Q-D}{D(Q-1)}},\
v:=\sqrt{\tfrac{Q}{D(Q-1)}}.
\end{equation}
We claim that \smash{$\bigparen{(\bfphi_{x+y})_{y\in\bbF_Q}}_{x\in\bbF_Q}$} is a $\SUNTF_\bbC(D,Q,Q)$.
Note that $\norm{\bfphi_0}=\norm{\bfdelta_1}=1$ and that
\begin{equation*}
\norm{\bfphi_x}^2
=\biggnorm{u\bfdelta_1+v\sum_{d=2}^{D}\theta_d(x)\bfdelta_d}^2
=u^2+v^2\sum_{d=2}^{D}\abs{\theta_d(x)}^2
=u^2+(D-1)v^2
=1
\end{equation*}
for any $x\in\bbF_Q^\times$.
By Theorem~\ref{thm.SUNTF}(a), it thus suffices to show that
\begin{equation*}
u^2
=\tfrac{Q-D}{D(Q-1)}
=\tfrac1{Q}\sum_{x\in\bbF_Q}\abs{\ip{\bfphi_{x+y_1}}{\bfphi_{x+y_2}}}^2
=\tfrac1{Q}\sum_{x\in\bbF_Q}\abs{\ip{\bfphi_{x}}{\bfphi_{x-(y_1-y_2)}}}^2
\end{equation*}
for any distinct $y_1,y_2\in\bbF_Q$, namely that
\begin{equation*}
Qu^2
=\sum_{x\in\bbF_Q}\abs{\ip{\bfphi_{x}}{\bfphi_{x-y}}}^2
=\abs{\ip{\bfphi_{0}}{\bfphi_{-y}}}^2
+\abs{\ip{\bfphi_{y}}{\bfphi_{0}}}^2
+\sum_{x\in\bbF_Q\backslash\set{0,y}}\abs{\ip{\bfphi_{x}}{\bfphi_{x-y}}}^2
\end{equation*}
for any $y\in\bbF_Q^\times$.
In light of~\eqref{eq.pf.Gauss.1} and the fact that $\bfphi_0=\bfdelta_1$, this reduces to having
\begin{equation*}
Qu^2
=2u^2+\sum_{x\in\bbF_Q\backslash\set{0,y}}\biggabs{u^2+v^2\sum_{d=2}^{D}\overline{\bftheta_d(x)}\bftheta_d(x-y)}^2
\end{equation*}
for any $y\in\bbF_Q^\times$.
Continuing to simplify,
note that since $\bftheta_d(0)=0$ whenever $d\geq 2$,
it thus suffices to have
\begin{equation*}
Qu^2
=2u^2-2u^4+\sum_{x\in\bbF_Q}\biggabs{u^2+v^2\sum_{d=2}^{D}\overline{\bftheta_d(x)}\bftheta_d(x-y)}^2
\end{equation*}
for any $y\in\bbF_Q^\times$.
Expanding the squared modulus and rearranging, this is equivalent to having
\begin{align*}
(Q-2)u^2(1-u^2)
&=2u^2v^2\real\biggparen{\,\sum_{d=2}^{D}\sum_{x\in\bbF_Q}\overline{\bftheta_d(x)}\bftheta_d(x-y)}\\
&\quad+v^4\sum_{d_1=2}^{D}\sum_{d_2=2}^{D}\sum_{x\in\bbF_Q}
\bftheta_{d_1}(x)\overline{\bftheta_{d_1}(x-y)}
\overline{\bftheta_{d_2}(x)}\bftheta_{d_2}(x-y)
\end{align*}
for any $y\in\bbF_Q^\times$.
For any $d_1,d_2\in\set{2,\dotsc,D}$,
the extension of \smash{$\eta_{d_1,d_2}:=\theta_{d_1}^{-1}\theta_{d_2}^{}$} satisfies \smash{$\bfeta_{d_1,d_2}(x)=\overline{\bftheta_{d_1}(x)}\bftheta_{d_2}(x)$} for any $x\in\bbF_Q$,
and so this further simplifies to having
\begin{equation*}
(Q-2)u^2(1-u^2)
=2u^2v^2\sum_{d=2}^{D}\real(\ip{\bftheta_d}{\brT^y\bftheta_d})
+v^4\sum_{d_1=2}^{D}\sum_{d_2=2}^{D}\ip{\bfeta_{d_1,d_2}}{\brT^y\bfeta_{d_1,d_2}}
\end{equation*}
for any $y\in\bbF_Q^\times$.
Since $\theta_d$ and $\eta_{d_1,d_2}$ are nonidentity multiplicative characters whenever $d_1\neq d_2$,
Lemma~\ref{lem.autocor of mult char} further simplifies this to having
\begin{equation*}
(Q-2)u^2(1-u^2)
=-2u^2v^2(D-1)
-v^4(D-1)(D-2)
+v^4\sum_{d=2}^{D}\ip{\bfeta_{d,d}}{\brT^y\bfeta_{d,d}}
\end{equation*}
for any $y\in\bbF_Q^\times$.
Since each $\bfeta_{d,d}$ is the extension of the identity multiplicative character with $\bfeta_{d,d}(0)=0$,
we have $\ip{\bfeta_{d,d}}{\brT^y\bfeta_{d,d}}=Q-2$ for any $d\in\set{2,\dotsc,D}$ and $y\in\bbF_Q^\times$,
meaning this sufficient condition simplifies to the following condition that is independent of $y$:
\begin{equation*}
(Q-2)u^2(1-u^2)
=-2u^2v^2(D-1)
-v^4(D-1)(D-2)
+v^4(D-1)(Q-2).
\end{equation*}
To see that it indeed holds,
recall that $1-u^2=(D-1)v^2$ and so it further reduces to having
\begin{equation*}
(Q-2)u^2
=-2u^2
-v^2(D-2)
+v^2(Q-2),
\end{equation*}
namely to having $Qu^2=v^2(Q-D)$,
which immediately follows from the definitions of $u$ and $v$.

Having shown that \smash{$\bigparen{(\bfphi_{x+y})_{y\in\bbF_Q}}_{x\in\bbF_Q}$} is indeed a $\SUNTF_\bbC(D,Q,Q)$,
now note that a real such SUNTF exists whenever the $[D]\times\bbF_Q$ complex synthesis matrix $\bfPhi$ of $(\bfphi_x)_{x\in\bbF_Q}$ is equivalent---via multiplication on the left by a unitary---to a matrix with real-valued entries.
In light of~\eqref{eq.conjugate pairs} and~\eqref{eq.pf.Gauss.1},
it suffices to choose $(\theta_d)_{d=2}^{D}$ as $\frac{D-1}{2}$ conjugate pairs when $D$ is odd,
and as $\frac{D}{2}-1$ conjugate pairs along with the quadratic character when instead $D$ is even and $Q$ is odd.
\end{proof}

Combining Theorems~\ref{thm.SUNTF}(d), \ref{thm.DF construction} and~\ref{thm.Gauss sums} yields the following positive answer to Problem~\ref{prob.SUNTF} whenever $N$ is a prime power:

\begin{corollary}
For any prime power $Q$ and positive integers $D\leq Q$ and $R\geq(Q-2)(Q-1)$,
a $\SUNTF_\bbC(D,Q,R)$ exists.
Such a SUNTF can be chosen to be real if either $D$ or $Q$ is odd.
\end{corollary}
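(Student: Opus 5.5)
The plan is to combine Theorem~\ref{thm.DF construction}, Theorem~\ref{thm.Gauss sums} and Theorem~\ref{thm.SUNTF}(d), and then solve a Frobenius coin problem. Fix a prime power $Q$ and a positive integer $D\leq Q$. Theorem~\ref{thm.DF construction} provides a $\SUNTF_\bbR(D,Q,Q-1)$, which is in particular a $\SUNTF_\bbC(D,Q,Q-1)$. Theorem~\ref{thm.Gauss sums} provides a $\SUNTF_\bbC(D,Q,Q)$, which can be taken real whenever $D$ or $Q$ is odd. Theorem~\ref{thm.SUNTF}(d) stacks SUNTFs with common $D$ and $N$. Applying it to $a$ copies of the first object and $b$ copies of the second therefore yields a $\SUNTF_\bbC(D,Q,a(Q-1)+bQ)$ for any nonnegative integers $a,b$, not both zero. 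If every ingredient is real, so is the result, because stacking only concatenates layers. This gives the realness claim whenever $D$ or $Q$ is odd.

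It remains to show that every integer $R\geq(Q-2)(Q-1)$ can be written as $a(Q-1)+bQ$ with $a,b\geq0$. Since $\gcd(Q-1,Q)=1$, Sylvester's solution of the two-coin Frobenius problem says the largest non-representable integer is $(Q-1)Q-(Q-1)-Q=Q^2-3Q+1=(Q-2)(Q-1)-1$. Hence every $R\geq(Q-2)(Q-1)$ is representable.

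For a self-contained argument, write $R=qQ+s$ with $0\leq s<Q$. Then
\begin{equation*}
R=s(Q-1)+(q-s)Q,
\end{equation*}
so it suffices that $q\geq s$. If $q<s\leq Q-1$, then $q\leq Q-2$, and $R\leq qQ+Q-1$. The bad $R$ are exactly those $R=qQ+s$ with $q<s\leq Q-1$. The largest of these is at $q=Q-2$, $s=Q-1$, namely $R=(Q-2)Q+Q-1=Q^2-Q-1$. That exceeds the claimed threshold, so this naive bound is not sharp enough, and I must be careful here.

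The fix is to use the other reduction, $R=q(Q-1)+s$ with $0\leq s<Q-1$, which gives
\begin{equation*}
R=(q-s)(Q-1)+sQ,
\end{equation*}
valid when $q\geq s$. If instead $q<s\leq Q-2$, then $q\leq Q-3$, so
\begin{equation*}
R\leq(Q-3)(Q-1)+Q-2=Q^2-3Q+1<(Q-2)(Q-1).
\end{equation*}
Hence every $R\geq(Q-2)(Q-1)$ has $q\geq s$ and is representable, in agreement with Sylvester's formula.

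One degenerate case needs a note. When $Q=2$ the threshold is $0$, and we need all $R\geq1$. This holds because $Q-1=1$, so $a=R$, $b=0$ works; the Sylvester argument covers it as well. The only real obstacle is this arithmetic step, and the rest is a direct citation of the earlier theorems.
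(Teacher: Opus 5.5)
Your proposal is correct and follows the paper's proof: stack copies of the $\SUNTF_\bbC(D,Q,Q-1)$ from Theorem~\ref{thm.DF construction} and the $\SUNTF_\bbC(D,Q,Q)$ from Theorem~\ref{thm.Gauss sums} using Theorem~\ref{thm.SUNTF}(d), apply Sylvester's bound to the coprime pair $(Q-1,Q)$, and get realness because stacking real layers gives real layers. Your discarded first base-$Q$ attempt has an algebra slip ($s(Q-1)+(q-s)Q=qQ-s$, not $qQ+s$), which is why it appeared too weak, but the base-$(Q-1)$ argument you actually rely on is correct.
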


\begin{proof}[Proof.]
For any such $Q$ and $D$,
Theorems~\ref{thm.DF construction} and~\ref{thm.Gauss sums} yield a $\SUNTF_\bbC(D,Q,R)$ when $R$ is either $Q-1$ or $Q$, respectively.
Applying Theorem~\ref{thm.SUNTF}(d) with copies of these two SUNTFs yields a $\SUNTF_\bbC(D,Q,R)$ for any positive integer $R$ that is a nonnegative integer linear combination of $Q-1$ and $Q$.
Since $Q-1$ and $Q$ are relatively prime,
Sylvester's classical solution to the Frobenius coin problem implies that this includes all
$R\geq
Q(Q-1)-(Q-1)-Q+1=(Q-2)(Q-1)$.
Applying the same argument to the real instances of such SUNTFs provided by Theorems~\ref{thm.DF construction} and~\ref{thm.Gauss sums} gives the analogous real result.
\end{proof}
We leave additional research into SUNTFs,
and Problem~\ref{prob.SUNTF} in particular,
for future work.

\backmatter

\bmhead{Acknowledgements}

JJ was supported by NSF DMS 2220320.
DGM was supported by NSF DMS 2220304.
All novel mathematics in this paper was developed by the authors without the aid of artificial intelligence tools.
The authors moreover wrote the manuscript in the traditional way,
with the exception that GPT-6 Astra Max served as a post facto proof checker and editor.
It found no major errors, some minor omissions, and a number of typographical errors, both mathematical and grammatical,
which the authors corrected manually.
The views expressed in this manuscript are those of the authors and do not reflect the official policy or position of the United States Air Force, the Department of Defense or the United States Government.

%
%
%

\begin{appendices}

\section{Supplementary Proofs}

\begin{proof}[Proof of Lemma~\ref{lem.equal Gram}.]
($\Leftarrow$) Immediate.
($\Rightarrow$) Note that $\bfL\bfL^*:\calE\rightarrow\calE$ has $\im(\bfL\bfL^*)=\im(\bfL)=\calE$ and so is invertible,
implying \smash{$\bfU:=\widetilde{\bfL}\bfL^*(\bfL\bfL^*)^{-1}$} is a well-defined linear map from $\calE$ into \smash{$\widetilde{\calE}$} where
\begin{align*}
\dim(\calE)
&=\rank(\bfL)
=\rank(\bfL^*\bfL)
=\rank(\widetilde{\bfL}^*\widetilde{\bfL})
=\rank(\widetilde{\bfL})
=\dim(\widetilde{\calE}),\\
\bfU^*\bfU
&=(\bfL\bfL^*)^{-1}\bfL\widetilde{\bfL}^*\widetilde{\bfL}\bfL^*(\bfL\bfL^*)^{-1}
=(\bfL\bfL^*)^{-1}\bfL\bfL^*\bfL\bfL^*(\bfL\bfL^*)^{-1}
=\brI,\\
\bfU^*\widetilde{\bfL}
&=(\bfL\bfL^*)^{-1}\bfL\widetilde{\bfL}^*\widetilde{\bfL}
=(\bfL\bfL^*)^{-1}\bfL\bfL^*\bfL
=\bfL.
\end{align*}
Thus, $\bfU$ is invertible, unitary and satisfies $\widetilde{\bfL}=\bfU\bfL$.
\end{proof}

\vspace{-\bigskipamount}

\begin{proof}[Proof of the Gerzon bound \eqref{eq.Gerzon}.]
The finite sequence $(\bfPi_n)_{n\in\calN}$ lies in the real Euclidean space of self-adjoint operators on $\calE$ with
\smash{$\ip{\bfPi_{n_1}}{\bfPi_{n_2}}_\Fro
=\Tr(\bfPi_{n_1}\bfPi_{n_2})
=\norm{\bfPhi_{n_1}^*\bfPhi_{n_2}^{}}_{\Fro}^2$} for any $n_1,n_2\in\calN$.
As such, $(\calF_n)_{n\in\calN}$ is EC with chordal constant $\varLambda$ if and only if the Gram matrix $\bfG$ of $(\bfPi_n)_{n\in\calN}$ is $R(1-\varLambda)\brI+R\varLambda\brJ$ where $\brJ$ is an $\calN\times\calN$ all-ones matrix.
When this occurs and at least two members of $(\calF_n)_{n\in\calN}$ are distinct then $\varLambda<1$.
This implies $\bfG$ is positive definite and so $(\bfPi_n)_{n\in\calN}$ is linearly independent, giving~\eqref{eq.Gerzon}.
\end{proof}

\vspace{-\bigskipamount}

\begin{proof}[Proof of the Welch bounds~\eqref{eq.Welch bounds}.]
Recall that $(\calF_n)_{n\in\calN}$ is a TFF for $\calE$ if and only if
\smash{$\bfPhi\bfPhi^*=\frac{NR}{D}\brI$}.
In general,
\begin{equation*}
0
\leq\norm{\bfPhi\bfPhi^*-\tfrac{NR}{D}\brI}_{\Fro}^2
=\biggnorm{\sum_{n\in\calN}\bfPhi_{n}^{}\bfPhi_{n}^{*}-\tfrac{NR}{D}\brI}_{\Fro}^2
=\Tr\biggparen{\Bigparen{\,\sum_{n\in\calN}\bfPhi_{n}^{}\bfPhi_{n}^{*}-\tfrac{NR}{D}\brI}^2}.
\end{equation*}
Expanding the square, distributing and cycling traces, and simplifying gives
\begin{equation}
\label{eq.pf.Welch bounds.1}
0
\leq\norm{\bfPhi\bfPhi^*-\tfrac{NR}{D}\brI}_{\Fro}^2
=\sum_{n_1\neq n_2}\norm{\bfPhi_{n_1}^{*}\bfPhi_{n_2}^{}}_\Fro^2
-\tfrac{NR(NR-D)}{D}.
\end{equation}
Assuming $N>1$,
rearranging and dividing by $N(N-1)R>0$ gives
\begin{equation}
\label{eq.pf.Welch bounds.2}
\tfrac{NR-D}{D(N-1)}
\leq\tfrac{1}{N(N-1)}\sum_{n_1\neq n_2}\tfrac{1}{R}\norm{\bfPhi_{n_1}^{*}\bfPhi_{n_2}^{}}_\Fro^2.
\end{equation}
Equality holds in~\eqref{eq.pf.Welch bounds.2} if and only if it holds in~\eqref{eq.pf.Welch bounds.1},
namely if and only if $(\calF_n)_{n\in\calN}$ is a TFF for $\calE$.
Bounding this average by its maximum gives
\begin{equation}
\label{eq.pf.Welch bounds.3}
\tfrac{NR-D}{D(N-1)}
\leq\tfrac{1}{N(N-1)}\sum_{n_1\neq n_2}\tfrac{1}{R}\norm{\bfPhi_{n_1}^{*}\bfPhi_{n_2}^{}}_\Fro^2
\leq\max_{n_1\neq n_2}\tfrac{1}{R}\norm{\bfPhi_{n_1}^{*}\bfPhi_{n_2}^{}}_\Fro^2
\leq\max_{n_1\neq n_2}\norm{\bfPhi_{n_1}^{*}\bfPhi_{n_2}^{}}_2^2,
\end{equation}
namely~\eqref{eq.Welch bounds}.
Moreover, \smash{$\mu_{\rmc}^2=\tfrac{NR-D}{D(N-1)}$} if and only if equality holds in the first and second inequalities of~\eqref{eq.pf.Welch bounds.3},
namely if and only if $(\calF_n)_{n\in\calN}$ is a TFF for $\calE$ and is EC, respectively.
Meanwhile, $\mu_{\rms}^2=\tfrac{NR-D}{D(N-1)}$ if and only if equality holds throughout
\begin{equation*}
\tfrac{NR-D}{D(N-1)}
\leq\tfrac{1}{N(N-1)}\sum_{n_1\neq n_2}\tfrac{1}{R}\norm{\bfPhi_{n_1}^{*}\bfPhi_{n_2}^{}}_\Fro^2
\leq\tfrac{1}{N(N-1)}\sum_{n_1\neq n_2}\norm{\bfPhi_{n_1}^{*}\bfPhi_{n_2}^{}}_2^2
\leq\max_{n_1\neq n_2}\norm{\bfPhi_{n_1}^{*}\bfPhi_{n_2}^{}}_2^2,
\end{equation*}
namely if and only if $(\calF_n)_{n\in\calN}$ is a TFF for $\calE$ where each pair of its subspaces is isoclinic with $\norm{\bfPhi_{n_1}^{*}\bfPhi_{n_2}^{}}_2^2$ being constant over all distinct $n_1,n_2\in\calN$,
namely if and only if $(\calF_n)_{n\in\calN}$ is an EITFF for $\calE$.
Notably, to show that $(\calF_n)_{n\in\calN}$ is an EITFF or ECTFF for $\calE$,
it suffices to show that $(\calF_n)_{n\in\calN}$ is EI or EC with chordal constant \smash{$\varLambda=\frac{NR-D}{D(N-1)}$}, respectively.
\end{proof}

\vspace{-\bigskipamount}

\begin{proof}[Proof of Lemma~\ref{lem.equivalence}.]
(i $\Leftrightarrow$ ii)
The projection onto the image of an isometry $\bfPhi$ is $\bfPhi\bfPhi^*$.
Thus, for any unitary \smash{$\bfU:\calE\rightarrow\widetilde{\calE}$},
each $\bfU\bfPhi_n^{}\bfPhi_n^{*}\bfU^*$ is the projection onto
\smash{$\im(\bfU\bfPhi_n)
=\bfU\bigparen{\im(\bfPhi_n)}
=\bfU(\calF_n)$}
while
\smash{$\widetilde{\bfPhi}_n^{}\widetilde{\bfPhi}_n^{*}$} is the projection onto \smash{$\widetilde{\calF}_n$}.
(ii $\Rightarrow$ iii)
For any such $\bfU$ and $n\in\calN$,
apply Lemma~\ref{lem.equal Gram} where
\smash{$\bfL=\bfPhi_n^{*}\bfU^*$}
and
\smash{$\widetilde{\bfL}=\widetilde{\bfPhi}_n^{*}$}.
(iii $\Rightarrow$ ii) Immediate.
(iii $\Rightarrow$ iv) Immediate.
(iv $\Rightarrow$ iii)
The fusion synthesis maps $\bfPhi$ and $\widetilde{\bfPhi}$ of $(\bfPhi_n)_{n\in\calN}$ and $(\widetilde{\bfPhi}_n)_{n\in\calN}$, respectively, satisfy
\smash{$\widetilde{\bfPhi}^{*}\widetilde{\bfPhi}=\widehat{\bfV}\bfPhi^*\bfPhi\widehat{\bfV}^*$}
where $\widehat{\bfV}$ is the block-diagonal matrix that has \smash{$(\bfV_n)_{n\in\calN}$} as its diagonal blocks.
Regarding the codomains of $\bfPhi$ and $\widetilde{\bfPhi}$ to be their images and applying Lemma~\ref{lem.equal Gram} with \smash{$\bfL=\bfPhi\widehat{\bfV}^*$} and \smash{$\widetilde{\bfL}=\widetilde{\bfPhi}$} thus gives a unitary \smash{$\bfU:\Span(\calF_n)_{n\in\calN}\rightarrow\Span(\widetilde{\calF}_n)_{n\in\calN}$} such that
\smash{$\widetilde{\bfPhi}=\bfU\bfPhi\widehat{\bfV}^*$},
namely such that \smash{$\widetilde{\bfPhi}_n^{}=\bfU\bfPhi_n^{}\bfV_n^*$} for any $n\in\calN$.
Since \smash{$\dim(\widetilde{\calE})=\dim(\calE)$} by assumption,
this $\bfU$ extends to a unitary from $\calE$ into $\widetilde{\calE}$.
\end{proof}

\vspace{-\bigskipamount}

\begin{proof}[Proof of Lemma~\ref{lem.autocor of mult char}.]
We consider a particular isomorphism from the additive group of $\bbF_Q$ onto its Pontryagin dual \smash{$\widehat{\bbF}_Q$}.
In particular, let $P$ be the (prime) characteristic of $\bbF_Q$,
let $J$ be the positive integer such that $Q=P^J$,
and consider the \textit{field trace} $\tr:\bbF_Q\rightarrow\bbF_P$,
\smash{$\tr(x)=\sum_{j=0}^{J-1}x^{P^j}$}.
It is well-known that $\tr$ is a nontrivial $\bbF_P$-linear functional.
For any $x\in\bbF_Q$, the linearity of $\tr$ implies that $\gamma_x:\bbF_Q\rightarrow\bbT$, \smash{$\gamma_x(y):=\exp\bigparen{\frac{2\pi\rmi}{P}\tr(xy)}$} is a well-defined additive character of $\bbF_Q$.
Moreover, the mapping $x\mapsto\gamma_x$ is clearly a homomorphism from $\bbF_Q$ into \smash{$\widehat{\bbF}_Q$}.
Its kernel consists of those $x\in\bbF_Q$ for which $\exp\bigparen{\frac{2\pi\rmi}{P}\tr(xy)}=1$ for any $y\in\bbF_Q$,
namely those $x\in\bbF_Q$ for which $\tr(xy)=0$ for any $y\in\bbF_Q$.
Since the trace is nontrivial, this implies that $x=0$ (since otherwise, take $y=x^{-1}x_0$ where $\tr(x_0)\neq0$).
Thus, $x\mapsto\gamma_x$ is an injective homomorphism from $\bbF_Q$ into the group $\widehat{\bbF}_Q$.
Since $\#(\widehat{\bbF}_Q)\leq\#(\bbF_Q)<\infty$,
this map is in fact an isomorphism onto \smash{$\widehat{\bbF}_Q$}.

Now let \smash{$\theta\in\widehat{\bbF}_Q^\times$} be a nonidentity multiplicative character and consider the Gauss sums of the form~\eqref{eq.Gauss sum}.
If $\gamma=1$, then~\eqref{eq.Gauss sum} vanishes since distinct (multiplicative) characters are orthogonal.
Meanwhile, if $\gamma\neq1$ then $\gamma=\gamma_x$ for some $x\neq0$,
implying $\gamma_x(y)=\gamma_1(xy)$ and so
\begin{equation*}
\ip{\bfgamma_x}{\bftheta}
=\sum_{y\in\bbF_Q^\times}\overline{\gamma_1(xy)}\theta(y)
=\sum_{z\in\bbF_Q^\times}\overline{\gamma_1(z)}\theta(x^{-1}z)
=\overline{\theta(x)}\ip{\bfgamma_1}{\bftheta}.
\end{equation*}
In particular, $\abs{\ip{\bfgamma_x}{\bftheta}}^2=\abs{\ip{\bfgamma_1}{\bftheta}}^2$ for any \smash{$x\in\bbF_Q^\times$}.
Moreover, Parseval--Plancherel gives
\begin{equation*}
Q(Q-1)
=Q\norm{\bftheta}^2
=\norm{\bfGamma^*\bftheta}^2
=\sum_{\gamma\in\widehat{\bbF}_Q}\abs{\ip{\bfgamma}{\bftheta}}^2
=\sum_{x\in\bbF_Q}\abs{\ip{\bfgamma_x}{\bftheta}}^2
=(Q-1)\abs{\ip{\bfgamma_1}{\bftheta}}^2.
\end{equation*}
Thus, $\abs{\ip{\bfgamma_x}{\bftheta}}^2=\abs{\ip{\bfgamma_1}{\bftheta}}^2=Q$ for any \smash{$x\in\bbF_Q^\times$}.
The discrete Fourier transform (with respect to the additive characters of $\bbF_Q$) of the autocorrelation of the extended multiplicative character $\bftheta$ thus satisfies
\begin{equation*}
\bigparen{\bfGamma^*(\bftheta*\bftheta^{\star})}(\gamma)
=\abs{(\bfGamma^*\bftheta)(\gamma)}^2
=\left\{\begin{array}{cl}
0,&\ \gamma=1\\
Q,&\ \gamma\neq1
\end{array}\right\}
=Q(\bfchi_{\widehat{\bbF}_Q}-\bfdelta_1)(\gamma)
=\bigparen{\bfGamma^*(Q\bfdelta_0-\bfchi_{\bbF_Q})}(\gamma)
\end{equation*}
for any $\gamma\in\widehat{\bbF}_Q$.
Thus, $\bftheta*\bftheta^{\star}=Q\bfdelta_0-\bfchi_{\bbF_Q}$ and so for any $x\in\bbF_Q$,
\begin{equation*}
\ip{\brT^x\bftheta}{\bftheta}
=\sum_{y\in\bbF_Q}\bftheta(y)\overline{\bftheta(y-x)}
=(\bftheta*\bftheta^{\star})(x)
=(Q\bfdelta_0-\bfchi_{\bbF_Q})(x)
=\left\{\begin{array}{rl}
Q-1,&\ x=0,\\
 -1,&\ x\neq 0.
\end{array}\right.\quad\qedhere
\end{equation*}
\end{proof}

\end{appendices}
\end{document}